%-----------------------------------------------------------------------
% Beginning of article-template.tex
%-----------------------------------------------------------------------
%
%    This is a template file for proceedings articles prepared with AMS
%    author packages, for use with AMS-LaTeX.
%
%    Templates for various common text, math and figure elements are
%    given following the \end{document} line.
%
%%%%%%%%%%%%%%%%%%%%%%%%%%%%%%%%%%%%%%%%%%%%%%%%%%%%%%%%%%%%%%%%%%%%%%%%

%    Remove any commented or uncommented macros you do not use.

%    Replace amsproc by the name of the author package.
\documentclass{conm-p-l}

%    If you need symbols beyond the basic set, uncomment this command.
%\usepackage{amssymb}

%    If your article includes graphics, uncomment this command.
%\usepackage{graphicx}

%    If the article includes commutative diagrams, ...
%\usepackage[cmtip,all]{xy}

%    Include other referenced packages here.
\usepackage{url}
\usepackage{pgf}
\usepackage{tikz}
\usepackage{tikz-cd}
\usepackage{pgfplots}
\usepackage[colorlinks]{hyperref}
\usepackage{framed}
\usepackage[british]{babel}
\usepackage{amsthm}
\usepackage{amsmath}
\usepackage{amsfonts}
\usepackage{amssymb}
\usepackage{mathtools}
\usepackage{enumitem}
\usepackage{amscd}
\usepackage{graphicx}
\usepackage{mathrsfs}
\usepackage{extarrows}
\usepackage{stmaryrd}
\usepackage{mathdots}
\usepackage{fancyhdr}
\usepackage{verbatim}
\usepackage{xifthen}
\usepackage{xcolor}
\usepackage{clipboard}
\usepackage{etoolbox}
\usepackage{verbatim}
\usepackage{todonotes}
\usepackage{adjustbox}
\usepackage[page]{appendix}
%\usepackage[style=alphabetic,backend=biber,maxnames=99,maxbibnames=99]{biblatex}
%\addbibresource{paper.bib}
%\DeclareFieldFormat{labelalpha}{\thefield{entrykey}}
%\DeclareFieldFormat{extraalpha}{}

%    Update the information and uncomment if AMS is not the copyright
%    holder.
%\copyrightinfo{2009}{American Mathematical Society}

\newtheorem{theorem}{Theorem}[section]
\newtheorem{prop}[theorem]{Proposition}
\newtheorem{lemma}[theorem]{Lemma}
\newtheorem{cor}[theorem]{Corollary}

\theoremstyle{definition}
\newtheorem{definition}[theorem]{Definition}
\newtheorem{example}[theorem]{Example}

\theoremstyle{remark}
\newtheorem{remark}[theorem]{Remark}

\numberwithin{equation}{section}

%Various calligraphic/blackboard bold letters

\newcommand{\N}{\mathbb{N}}
\newcommand{\R}{\mathbb{R}}

\newcommand{\PP}{\mathbb{P}}

\newcommand{\calc}{\mathscr{C}}
\newcommand{\cald}{\mathscr{D}}
\newcommand{\cale}{\mathscr{E}}

\newcommand{\calg}{\mathscr{G}}

\newcommand{\call}{\mathscr{L}}

\newcommand{\calp}{\mathcal{P}}

\newcommand{\calm}{\mathscr{M}}

%Notation for various categories
\newcommand{\Top} {\mathsf{Top}}

\newcommand{\sSet} {\mathsf{sSet}}

\newcommand{\Lin}[1]{\mathsf{Lin}\br{#1}}

%Notation for various (pre)operads
\newcommand{\Com}{\mathsf{Com}}
\newcommand{\Ass}{\mathsf{Ass}}
\newcommand{\Conf}{\mathsf{Conf}}
\newcommand{\NConf}{\mathsf{NConf}}
\newcommand{\FM}{\mathsf{FM}}
\newcommand{\MS}{\mathsf{MS}}

%Typesetting totalisation
\newcommand{\Tot}[1]{\mathrm{Tot}\br{#1}}

%Notation for down sets
\newcommand{\down}{\downarrow\!}

%op for opposite category
\newcommand{\op}{\mathrm{op}}

%Definition symbol (:<=>)
\newcommand{\colonLeftrightarrow}{\mathrel{\vcentcolon\Leftrightarrow}}

%Typesetting for (homotopy) (co)limits
\newcommand{\limit}[1][]{ \ifthenelse{\isempty{#1}}{\mathrm{lim}}{\underset{#1}{\mathrm{lim}}}\, }
\newcommand{\colim}[1][]{
  \ifthenelse{\isempty{#1}}{\mathrm{colim}}{\underset{#1}{\mathrm{colim}}}\, }
\newcommand{\hocolim}[1][]{ \ifthenelse{\isempty{#1}}{\mathrm{hocolim}}{\underset{#1}{\mathrm{hocolim}}}\, }
\newcommand{\holim}[1][]{ \ifthenelse{\isempty{#1}}{\mathrm{holim}}{\underset{#1}{\mathrm{holim}}}\, }

%\renewcommand{\max}[1][]{
%  \ifthenelse{\isempty{#1}}{\mathrm{max}}{\underset{#1}{\mathrm{max}}}\, }
%\renewcommand{\min}[1][]{ \ifthenelse{\isempty{#1}}{\mathrm{min}}{\underset{#1}{\mathrm{min}}}\, }

%id symbol
\newcommand{\id}{\mathrm{id}}

%Various brackets
\newcommand{\br}[1]{\left(#1\right)}
\newcommand{\cb}[1]{\left\{#1\right\}}
\newcommand{\sbr}[1]{\left[#1\right]}

\newcommand{\abs}[1]{\left\vert#1\right\vert}

\usepackage{scalerel}

%Arrows with isomorphism/equivalence symbol over them (~)
\newcommand{\isoto}{\xlongrightarrow{\raisebox{-0.5em}{\smash{\ensuremath{\sim}}}}}

\newcommand{\isofrom}{\xlongleftarrow{\raisebox{-0.5em}{\smash{\ensuremath{\sim}}}}}

%Arrow styles for tikzcd
\tikzset{
  symbol/.style={
    draw=none,
    every to/.append style={
      edge node={node [sloped, allow upside down, auto={false}]{$#1$}}
    }
  }
}
\tikzset{
  iso/.style={
    every to/.append style={
      edge node={node [sloped, allow upside down, auto={false},
        font=\normalsize, yshift=2pt]{$\sim$}}
    }
  }
}
\tikzset{
  iso'/.style={
    every to/.append style={
      edge node={node [sloped, allow upside down, auto={false}, font=\normalsize, yshift=-4pt]{$\sim$}}
    }
  }
}

\begin{document}

% \title[short text for running head]{full title}
\title[A small catalogue of $E_n$-operads]{A small catalogue of $E_n$-operads}

%    Only \author and \address are required; other information is
%    optional.  Remove any unused author tags.

%    author one information
% \author[short version for running head]{name for top of paper}
\author[A.~Beuckelmann]{Andr\'e Beuckelmann}
\address{Universit\"at Hamburg}
\email{andre.beuckelmann@uni-hamburg.de}

%    author two information
\author[I.~Moerdijk]{Ieke Moerdijk}
\address{Universiteit Utrecht}
\email{i.moerdijk@uu.nl}

\thanks{}
%    The 2020 edition of the Mathematics Subject Classification is
%    the current definitive version.
\subjclass[2020]{Primary 55P48}

\date{\today}

\begin{abstract}
  In this largely expository paper, we will present a list of $E_n$-operads and give complete, and in some cases new, proofs of the equivalences between these operads. 
\end{abstract}

\maketitle

%    Text of article.
\begin{center}
{\em To Ezra Getzler, on the occasion of his 60th birthday.}  
\end{center}
\vspace{2em}

\section{Introduction}

The theory of operads started in topology with the analysis of (iterated) loop spaces by Stasheff~\cite{Sta63}, Boardman-Vogt~\cite{BV73} and May~\cite{May72}, in terms of actions on spaces by the little $n$-cubes operads $\calc_n$ for $1 < n <\infty$. An $E_n$-operad is by definition an operad that is (weakly) equivalent to $\calc_n$. These $E_n$-operads and their associated homology have played a central role in many parts of algebra, geometry and mathematical physics ever since, and Getzler has played an important part in the development of the theory (see for example, Getzer-Jones~\cite{GJ94}, Getzler-Kapranov~\cite{GK95} and~\cite{GK98}, Getzler’s Batalin-Vilkovisky paper~\cite{Get94}). We refer to~\cite{MSS02} for a brief early history of the theory of operads. More recently new $E_n$-operads of a more combinatorial nature have been constructed, for example in the context of the Deligne-conjecture~\cite{MS04} and of formality problems~\cite{Tam98}.

The goal of this paper is to give an exposition of a series of $E_n$-operads, and to provide complete proofs of the equivalence between all these. It is not our intention to give a complete list, but instead focus on the following operads:
\begin{enumerate}
\item The classical little $n$-cubes operad $\calc_n$ introduced in~\cite{May72} and~\cite{BV73}.
\item The complete graphs operad $\calg_n$ introduced by Berger~\cite{Ber97}, and its extension $\calg_n^{\mathrm{ext}}$ introduced by Brun, Fiedorowicz and Vogt~\cite{BFV04}.
\item The $n$-th filtration of the Barratt-Eccles operad~\cite{BE74} introduced by Smith~\cite{Smi89}, who conjectured it to be an $E_n$-operad, and studied by Kashiwabara~\cite{Kas93}. It will be denoted $\Gamma_n$ in this paper.
\item The Fulton-MacPherson compactification $\FM_n$ of the space of configurations of finitely many distinct points in $\R^n$.
\item The operad $\calm_n$ of Balteanu et al.\ defined in terms of $n$-fold monoidal categories, see~\cite{BFSV98}.
\item The McClure-Smith operad introduced in~\cite{MS04} and used in their proof of the Deligne conjecture. This operad will be denoted $\MS_n$ below.
\end{enumerate}

All these operads will be defined in detail below, and no prior knowledge of them is expected of the reader.

It is generally believed to be known that these operads are all equivalent, and hence all deserve the attribute ``$E_n$-operad''. However, the literature contains some proofs that seem to need some further clarification or contain gaps. In particular, there seems to be no correct proof in the literature that $\calg_n$ is actually an $E_n$-operad. We will be more explicit about this below (see Appendix~\ref{appB}). But to give just one example, in~\cite{Ber97}, Berger argues that $\calg_n$ is an $E_n$-operad, and~\cite{BFV04} remark that $\calg_n$ should really be replaced by what we denote $\calg_n^{\mathrm{ext}}$, but we have been unable to find a proof in the literature that the inclusion from $\calg_n$ into $\calg_n^{\mathrm{ext}}$ is an equivalence of operads. We will present a direct combinatorial proof of such an equivalence, which is one of the new elements in this paper. Besides this, we have tried to streamline some arguments in the literature, or replaced them by simpler ones. As another example, Salvatore~\cite{Sal01} has proved that the Fulton-MacPherson operad $\FM_n$ is an $E_n$-operad, by constructing a map into it from the Boardman-Vogt resolution of the little $n$-cubes operad. Instead, we will define a map from the Fulton-MacPherson operad into the Boardman-Vogt resolution $W\calg_n$ of the complete graph operad $\calg_n$, which provides a different and perhaps somewhat simpler proof that $\FM_n$ is an $E_n$-operad.

The contents of our paper, then, is as follows. After recalling some basic definitions in Section~\ref{secOperads}, we explain in the next section a general method for comparing operads in different categories, notably in spaces and in posets. Our method can be viewed as using a stratification of topological operads, rather than cellular decomposition as in~\cite{Ber97} or~\cite{BFV04}. In Section~\ref{secCubes} we introduce the complete graphs operads $\calg_n$ and $\calg^{\mathrm{ext}}_n$ and prove their equivalence. (These are operads in posets, and we denote their geometric realisations by absolute value signs.) This equivalence is then used to relate the little $n$-cubes operad $\calc_n$ to the graphs in the following way: we will construct a commutative diagram (of symmetric sequences of spaces)

\[
  \begin{tikzcd}
    & \Conf_n \ar[r, iso] \ar[dd, iso', "\psi"] & \calc_n \ar[dd, "\varphi"] \\
     & &\\
    & \abs{\calg_n} \ar[r, hook, iso] &\abs{\calg_n^{\mathrm{ext}}}
  \end{tikzcd}
\]

The top map in this diagram is the evident homotopy equivalence relating the configurations of cubes to the configurations of their centres. The objects $\calc_n$, $\abs{\calg_n}$ and $\abs{\calg^{\mathrm{ext}}_n}$ are operads and $\varphi$ is a (lax) morphism of operads, but $\Conf_n$ is not. We will show that the map $\psi: \Conf_n\to\abs{\calg_n}$ is a weak homotopy equivalence between symmetric sequences of spaces, cf. Theorem~\ref{propGisEn} below. Further, the inclusion of $\calg_n$ into $\calg_n^{\mathrm{ext}}$ has been shown to be an equivalence, as well (Theorem~\ref{thmGraphs}), which shows that $\calc_n\to\abs{\calg_n^ {\mathrm{ext}}}$ is an equivalence. In particular, this proves that $\calg_n$ and $\calg_n^{\mathrm{ext}}$ are $E_n$-operads. Earlier attempts to prove this make use of a smaller symmetric sequence $\PP_n\subseteq \calg_n$ of posets, consisting of so called {\em proper\/} graphs (cf.~\cite{Ber97} and~\cite{Kas93}). We comment on the role of these proper graphs in the second part of the section.

In Section~\ref{secBE}, we introduce the Barratt-Eccles operad $\Gamma$ and its filtration by smaller operads $\Gamma_n$. There is a (lax) morphism of operads $\gamma: \Gamma \to \calg$ which maps $\Gamma_n$ into $\calg_n$, and it turns out to be surprisingly easy to show that the latter maps are homotopy equivalences. Having established that $\calg_n$ is an $E_n$-operad, this gives an easy (and new) proof of the conjecture by Smith mentioned above.

In a next section, we consider the Fulton-McPherson operad $\FM_n$. The spaces in this operad carry a stratification by trees, of exactly the same shape as the stratification of the Boardman-Vogt resolution $W\calg_n$ of the complete graph operad $\calg_n$. This results in a map $\FM_n \to W\br{\calg_n}$, and the equivalence $\psi: \Conf_n \to\calg_n$ shows that this map is an equivalence on strata. The fact that $\FM_n$ is equivalent to $\calg_n$ (Theorem~\ref{thmFMisEn} below) then follows easily.

There are two more operads on our list above. The operad $\calm_n$ coming from $n$-fold monoidal categories and the operad $\MS_n$ of McClure and Smith. Following~\cite{BFSV98} we will show that $\calm_n$ is an $E_n$-operad by relating it to the suboperad of $\calc_n$ given by the decomposable (see~\cite{Dun88}) little $n$-cubes. We conclude our paper with a presentation of the McClure-Smith operad in Section~\ref{secMS} and the construction of an equivalence to the complete graphs operad $\calg_n$, showing that it is indeed an $E_n$-operad. The ideas for this last proof are mostly contained in~\cite{MS04}, although we hope to have simplified the presentation somewhat, for example by making optimal use of the formalism of multisimplicial sets.

The paper has two appendices: one explaining some notations concerning the latter formalism, and one in which we explain in more detail the relation of our proofs to the arguments occurring in the literature. 

\subsection*{Acknowledgements}

This paper grew out of several activities on the topic: a seminar at Utrecht, the master's thesis by one of the authors under supervision of the other, and a crash course by the second author at Sheffield. We would like to thank Lennart Meier for useful comments during the seminar, the active audience at Sheffield, and Clemens Berger for helpful correspondence. Further, we would like to thank Vladimir Dotsenko for providing helpful feedback on the preprint of this paper. Finally, we would like to express our thanks to Paolo Salvatore for helping us improve the exposition, and in particular for helping us to correct the formulation of one of the statements in an earlier version of the paper.

\section{Operads}\label{secOperads}
\begin{definition}
  We fix a concrete category $\cale$. In this paper, we will only use the categories of sets, simplicial sets, a convenient category of spaces, posets, categories, or diagrams in one of these categories.
  An {\em operad\/} $P$ in $\cale$ assigns to each finite set $A$ an object $P\br{A}$ in $\cale$ of ``operations taking an A-indexed family of inputs''. Further, we require structure maps
  \begin{itemize}
  \item $1_a\in P\br{\cb{a}}$, the {\em unit}.
  \item For every $\sigma: B\isoto A$ a map $\sigma_\ast: P\br{B}\to P\br{A}$
  \item For every $A, B$, and $a\in A$ a {\em composition\/} map
    \[ \circ_a: P\br{A}\times P\br{B}\to P\br{A\sbr{B/a}} \]
    where $A\sbr{B/a}\coloneqq A\setminus \cb{a}\amalg B$ ($A$ with $a$ replaced by $B$).
  \end{itemize}
  These should satisfy the natural compatibility conditions for symmetries, units, and associativity. For a detailed description, see~\cite{May72}. 
\end{definition}

\begin{remark}
  Since the collection of all finite sets forms a class, we will technically have to work with only the finite sets contained in some convenient universe/level of the von Neumann hierarchy.
  We will disregard this point in the rest of the paper.
\end{remark}

\begin{example}
  \begin{enumerate}[itemindent=*,leftmargin=0pt]
  \item The {\em commutative operad\/} $\Com$ is given by $\Com\br{A}=\cb{\ast}$ for every $A$.
  \item The {\em associative operad\/} $\Ass$ is given by $\Ass\br{A}=\Sigma_A\coloneqq \cb{\text{bijections } A\isoto A}$.
  \end{enumerate}
\end{example}

\begin{remark}
  \begin{enumerate}[itemindent=*,leftmargin=0pt]
  \item Every finite set is isomorphic to a set of the form $n=\cb{0,\dots,n-1}$ for $n\in \N$. With this, it is enough to specify the $P\br{n}$, as is usually done in the literature. The definition above is the ``coordinate free'' version.
  \item Instead of the $\circ_a$ composition, we could equivalently specify composition maps for functions $f:B\to A$ between finite sets:
    \[ P\br{A}\times \prod_{a\in A} P\br{f^ {-1}\br{a}}\to P\br{B} \]
  \end{enumerate}
\end{remark}

\begin{example}[Little $n$-Cubes]
  The operad $\calc_n$ of little $n$-cubes has as space of operations $\calc_n\br{A}$ the space of injections
  \[ \coprod_{a\in A}\br{0,1}^ n\xhookrightarrow{\coprod_{a\in A}c_a} \br{0,1}^ n \]
  such that the component maps $c_a$ are rectilinear (i.e.\ maps of the form $c_a\br{x_1,\dots,x_n}$ $=\br{u_1x_1+v_1,\dots,u_{n}x_n+v_n}=u\odot x+v$ where we are using $\odot$ for the pointwise multiplication of vectors and where all the $u_i$ are positive).
  As for the operad structure, the unit is the identity embedding, the action is by permuting the $c_a$ (i.e.\ for $\sigma:B\isoto A$ and $c\in \calc_n\br{B}$, we set $\sigma_\ast c_{a}=c_{\sigma^ {-1}\br{a}}$), and the composition is defined by composing embeddings
  \begin{align*}
    c^ A\circ_a c^ B: \coprod_{x\in A\sbr{B/a}}\br{0,1}^n&=\coprod_{x\in A\setminus\cb{a}}\br{0,1}^n\amalg \coprod_{b\in B}\br{0,1}^n\\
                                                        &\xhookrightarrow{\id \amalg c^ B} \coprod_{x\in A\setminus\cb{a}}\br{0,1}^n\amalg \br{0,1}^n\\
                                                        &\cong \coprod_{x\in A}\br{0,1}^ n\overset{c^ A}{\hookrightarrow} \br{0,1}^ n
  \end{align*}
  \[
    \adjustbox{scale=0.85}{
    \begin{tikzpicture}[anchor=base, baseline=(X.base),scale=0.7]
      \draw (0,0) rectangle (5,5);
      \draw (1,3) rectangle (4,4);
      \node at (2.5,3.3) {$a_1$};
      \draw (1,1) rectangle (2,2);
      \node (X) at (2.3,2.3) {};
      \node at (1.5,1.3) {$a_2$};
      \draw (2.5, 0.5) rectangle (4.5, 2.5);
      \node at (3.5, 1.3) {$a_3$};
    \end{tikzpicture}\qquad $\circ_{a_3}$ \qquad
    \begin{tikzpicture}[anchor=base, baseline=(X.base), scale=0.7]
      \draw (0,0) rectangle (5,5);
      \draw (0.5,0.5) rectangle (2.5,2.5);
      \node (X) at (2.3,2.3) {};
      \node at (1.5,1.3) {$b_1$};
      \draw (2.5,2.5) rectangle (4.5,4.5);
      \node at (3.5,3.3) {$b_2$};
    \end{tikzpicture}\qquad =\qquad
    \begin{tikzpicture}[anchor=base, baseline=(X.base), scale=0.7]
      \draw (0,0) rectangle (5,5);
      \draw (1,3) rectangle (4,4);
      \node at (2.5,3.3) {$a_1$};
      \draw (1,1) rectangle (2,2);
      \node at (1.5,1.3) {$a_2$};
      \node (X) at (2.3,2.3) {};
      \draw[dashed] (2.5, 0.5) rectangle (4.5, 2.5);
      \draw (2.7, 0.7) rectangle (3.5, 1.5);
      \node at (3.1, 0.9) {$b_1$};
      \draw (3.5, 1.5) rectangle (4.3,2.3);
      \node at (3.9, 1.7) {$b_2$};
    \end{tikzpicture}}
  \]
\end{example}
\vspace{1em}

\begin{definition}\label{defConf}
  The {\em configuration space\/} $\Conf_n\br{A}$ is the space of injections $A\to \br{0,1}^ n$ where we are viewing $A$ as a discrete space --- equivalently, this is the space of configurations of $A$-indexed families of distinct points in $\br{0,1}^ n$. For one such injection $x:A\to \br{0,1}^ n$, we will often write $x_a$ for the image of $a\in A$ under $x$.
\end{definition}

\begin{remark}
  We can define a homotopy equivalence $\gamma: \calc_n\br{A}\to \Conf_n\br{A}$ by sending each cube to its centre. A homotopy inverse is given by replacing each point of a configuration with a sufficiently small cube (e.g.~the cube whose side lengths are half the distance to the closest other point or boundary of the cube $\br{0,1}^n$).
  The configuration spaces do not form an operad. They are, however, what is sometimes called a pre-operad: a contravariant functor from the category of finite, non-empty sets and injections to the category of spaces.
\end{remark}

\section{Equivalences}\label{secEquivalence}
\begin{definition}
  Let $P$ and $Q$ be operads in some category $\cale$. A morphism $\varphi: P\to Q$ of operads is given by a family of morphisms $\varphi_A: P\br{A}\to Q\br{A}$, one for each finite set, which together respect the operad structures of $P$ and $Q$. For example, we want the maps to be compatible with the composition in the sense that the diagram
  \[
    \begin{tikzcd}
      P\br{A}\times P\br{B}\ar[r, "\varphi_A\times \varphi_B"] \ar[d, "\circ_a"'] & Q\br{A}\times Q\br{B}\ar[d,"\circ_a"]\\
      P\br{A\sbr{B/a}} \ar[r,"\varphi_{A\sbr{B/a}}"'] & Q\br{A\sbr{B/a}}
    \end{tikzcd}
  \]
  should commute. For more details, see~\cite{May72}.

  In the case of operads in spaces, we call such a morphism an {\em equivalence}, if each $\varphi_A:P\br{A}\to Q\br{A}$ is a weak homotopy equivalence, and we write $\varphi: P\isoto Q$ in this case. Two operads in spaces are said to be equivalent if they can be related by a zigzag of equivalences
  \[ P\isofrom P_1\isoto P_2\isofrom \dots \isoto Q \]
  In fact, it suffices to consider zigzags of the form $P\isofrom P_1\isoto Q$. In this case, we will write $P\simeq Q$.
\end{definition}

\begin{definition}[$E_n$-operad]
  An operad $P$ in spaces is said to be an {\em $E_n$-operad\/} if it is equivalent to the little $n$-cubes operad and, further, the $\Sigma_n$-action on $P\br{n}$ is free.
\end{definition}

We wish to extend these definitions to operads in other categories.

\begin{definition}
  We call a morphism $P\to Q$ between operads in simplicial sets, posets, or categories an {\em equivalence}, if the induced map $\varphi: \abs{P}\to \abs{Q}$ between the geometric realisations is an equivalence of operads in spaces. Similarly, we will call an operad $P$ in one of these categories an {\em $E_n$-operad}, if $\abs{P}$ is.
\end{definition}

In showing that certain operads are $E_n$-operads, we will often need a weaker version of morphisms between them.

\begin{definition}
  Let $P$ and $Q$ be operads in posets. A {\em lax morphism\/} $\varphi: P\to Q$ is given by a family $\varphi_A:P\br{A}\to Q\br{A}$ of maps of posets, compatible with the symmetries, but preserving composition only in the weak sense that \[ \varphi\br{p\circ_a p^ \prime} \leq \varphi\br{p}\circ_a \varphi\br{p^ \prime} \]
  for any $p\in P\br{A}, p^ \prime\in P\br{B}$, and $a\in A$. We say that a lax morphism $\varphi$ is a {\em lax equivalence}, if each $\varphi_A: P\br{A}\to Q\br{A}$ induces a weak equivalence of classifying spaces $\abs{P\br{A}}\to \abs{Q\br{A}}$.
\end{definition}

\begin{remark}
  Everything we will say about lax morphisms also holds for morphisms that are, instead, lax in the opposite sense that
  \[ \varphi\br{p}\circ_a \varphi\br{p^ \prime} \leq \varphi\br{p\circ_a p^ \prime}\]
  This can be seen by simply replacing $P$ and $Q$ by the operads $P^{\op}$ and $Q^ {\op}$ all of whose posets $P^ {\op}\br{A}$ are simply the opposite of the posets $P\br{A}$ (and analogously for $Q$).
\end{remark}

\begin{lemma}
  If $\varphi: P\to Q$ is a lax equivalence between operads in posets, then $P$ and $Q$ are equivalent operads.
\end{lemma}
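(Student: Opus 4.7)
The plan is to strictify $\varphi$ by introducing a ``mapping cylinder'' operad that admits strict projections to both $P$ and $Q$, and to verify that both projections are levelwise weak equivalences on classifying spaces. Concretely, for each finite set $A$ I would define
$$ M\br{\varphi}\br{A} \coloneqq \cb{\br{p, q} \in P\br{A} \times Q\br{A} : \varphi\br{p} \leq q}, $$
ordered as a subposet of the product. The first task is to check that the operadic structure on $P \times Q$ restricts to $M\br{\varphi}$. Symmetries and units cause no trouble; the nontrivial check is closure under $\circ_a$. Given $\br{p,q} \in M\br{\varphi}\br{A}$ and $\br{p', q'} \in M\br{\varphi}\br{B}$, the lax inequality combined with monotonicity of $\circ_a$ in $Q$ gives
$$ \varphi\br{p \circ_a p'} \;\leq\; \varphi\br{p} \circ_a \varphi\br{p'} \;\leq\; q \circ_a q'. $$
Hence $M\br{\varphi}$ is an operad in posets, and the projections $\pi_P : M\br{\varphi} \to P$ and $\pi_Q : M\br{\varphi} \to Q$ are strict morphisms of operads.

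Next I would show that $\pi_P$ is a weak equivalence in each arity by applying Quillen's Theorem~A. For $p \in P\br{A}$, the under-poset $\cb{\br{p', q} \in M\br{\varphi}\br{A} : p \leq p'}$ contains $\br{p, \varphi\br{p}}$ as a minimum element: if $p \leq p'$ and $\varphi\br{p'} \leq q$, then monotonicity of $\varphi$ gives $\varphi\br{p} \leq \varphi\br{p'} \leq q$. A poset with a minimum has contractible classifying space, so Quillen's criterion applies and $\abs{\pi_P}$ is a weak equivalence in each arity.

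To deduce the analogous statement for $\pi_Q$, I would use the monotone map $s_A : P\br{A} \to M\br{\varphi}\br{A}$ sending $p$ to $\br{p, \varphi\br{p}}$. (The map $s$ is a poset map but is \emph{not} itself a strict operad morphism, which is the whole reason for introducing $M\br{\varphi}$.) It satisfies $\pi_P \circ s = \id_P$ and $\pi_Q \circ s = \varphi$, so two-out-of-three yields first that $\abs{s}$ is a weak equivalence, and then that $\abs{\pi_Q}$ is too. The zigzag $P \xleftarrow{\pi_P} M\br{\varphi} \xrightarrow{\pi_Q} Q$ of strict operad morphisms, each a levelwise weak equivalence after realization, then exhibits $P \simeq Q$.

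The only subtle choice is the direction of the defining inequality for $M\br{\varphi}$: it must match the direction of the lax inequality so that $\circ_a$ preserves membership. The opposite lax convention addressed in the remark preceding the lemma is handled symmetrically by running the same construction on $P^{\op}$ and $Q^{\op}$.
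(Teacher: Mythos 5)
Your proof is correct and follows essentially the same approach as the paper: you construct the same mapping-cylinder operad $R = M\br{\varphi}$ and project to both sides. The only cosmetic difference is that the paper deduces $\abs{\pi_P}$ is a weak equivalence by observing that $\iota: p \mapsto \br{p,\varphi\br{p}}$ and $\pi_P$ form an adjunction of posets, whereas you invoke Quillen's Theorem~A via the minimum $\br{p,\varphi\br{p}}$ of the under-poset---but these are the same observation in two guises, since that minimum is precisely the unit exhibiting $\iota$ as left adjoint to $\pi_P$.
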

\begin{proof}
  Define a new operad $R$ in posets by
  \[ R=\cb{\br{p,q}\in P\br{A}\times Q\br{A}\;\vert\; \varphi\br{p}\leq q} \]
  with the pointwise ordering. Then $R$ has the structure of an operad making the two projections $P\leftarrow R \to Q$ morphisms of operads (this uses that $\varphi$ is a lax morphism). It suffices to see that each of these is an equivalence. To this end, we consider for each set $A$ the diagram of posets
  \[
    \begin{tikzcd}
      P\br{A}\ar[r, shift right,"\iota"'] \ar[rd, "\varphi_A"'] & R\br{A} \ar[l, shift right, "\pi"'] \ar[d, "\lambda"] \\
      & Q\br{A}
    \end{tikzcd}
  \]
  where $\pi$ and $\lambda$ are the projections and $\iota\br{p}=\br{p,\varphi\br{p}}$. Then $\pi$ and $\iota$ are an adjunction of posets and, hence, induce a weak equivalence. Since $\varphi_A=\lambda\circ\iota$ is a weak equivalence by assumption, $\lambda$ is one as well.
\end{proof}

We will need a variation of this to relate operads in different categories.

\begin{definition}\label{defLax}
  Let $P$ be an operad in spaces and $Q$ one in posets. A {\em lax morphism\/} $\varphi: P\to Q$  is a lax morphism of operads in posets, where we view $P$ as an operad in posets by forgetting the topology and giving it the discrete partial order.
\end{definition}

From such a lax morphism, we wish to construct an actual zigzag of morphisms between operads by applying the following procedure to each component map:

Let $\varphi: X\to \mathbb{P}$ be a map (of sets) from a topological space to a poset. We define $X_\mathbb{P}$ to be the topological poset of pairs $\br{x,p}$ with $x\in X, p\in \mathbb{P}$, and $\varphi\br{x}\leq p$, ordered by $\br{x,p}\leq \br{y,q}$ if and only if $x=y$ and $p\leq q$. We write $\abs{X_{\mathbb{P}}}$ for the geometric realisation of this poset. The projections $X\xleftarrow{\mu} X_{\mathbb{P}}\xrightarrow{\pi} \mathbb{P}$ then induce maps of spaces $X\xleftarrow{\mu} \abs{X_{\mathbb{P}}}\xrightarrow{\pi} \abs{\mathbb{P}}$. Further, for each $p\in \PP$, we write $X_{\down p}$ for the subspace of $X$ given by $\cb{x\in X\;\vert\;\varphi\br{x}\leq p}$.

\begin{lemma}\label{lemEquivalence}
  For $\varphi: X\to \mathbb{P}$ and $X_{\mathbb{P}}$ as above
  \begin{enumerate}[label= (\alph*),itemindent=*,leftmargin=0pt]
  \item\label{lemeqa} The map $\mu:\abs{X_{\mathbb{P}}}\to X$ is a weak homotopy equivalence if one of the following holds
    \begin{enumerate}[label= (\roman*),itemindent=*,leftmargin=15pt]
    \item\label{lemeqai} $X$ is a smooth manifold and each $X_{\down p}$ for $p\in \PP$ is open in $X$.
    \item\label{lemeqaii} $X=\colim_{p\in \PP}X_{\down p}$, the poset $\PP$ is well-founded (i.e.~if there are no infinite descending chains) and for each $p$ the map $\colim[q<p]X_{\down q}\to X_{\down p}$ is a cofibration.
    \end{enumerate}
  \item\label{lemeqb} If each subspace $X_{\down p}$ of $X$ is a retract of a CW-complex, then $\abs{X_{\PP}}$ is (a model for) the homotopy colimit of the functor $\PP\to \Top$ sending $p$ to $X_{\down p}$.
  \item\label{lemeqc} If, further, each of the $X_{\down p}$ is weakly contractible, then $\pi: \abs{X_{\mathbb{P}}}\to \abs{\mathbb{P}}$ is a weak homotopy equivalence, as well.
  \end{enumerate}

\end{lemma}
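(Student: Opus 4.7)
The key structural observation is that $|X_\PP|$ is the geometric realization of the simplicial space whose space of $n$-simplices is $\coprod_{p_0 < \cdots < p_n} X_{\down p_0}$; modulo the standard comparison between non-degenerate and degenerate simplicial models of nerves, this identifies $|X_\PP|$ with the two-sided bar construction $B(\ast, \PP, X_{\down(-)})$ attached to the functor $\PP \to \Top$, $p \mapsto X_{\down p}$. This identification is the common engine for all three parts.

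Granted this, part (b) reduces to the standard fact that when each $X_{\down p}$ is a retract of a CW complex, the bar construction is a model for $\hocolim[\PP] X_{\down(-)}$. Part (c) then follows immediately from (b): the projection $\pi$ corresponds under the identification to the natural transformation $X_{\down(-)} \to \ast$, which is pointwise a weak equivalence by the contractibility hypothesis and hence induces a weak equivalence on homotopy colimits; thus $\pi: |X_\PP| \to |\PP|$ is a weak equivalence.

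Part (a) cannot invoke (b), lacking the CW hypothesis, so I would argue directly. For (ii), proceed by transfinite induction on the well-founded poset $\PP$, with induction hypothesis that each $\mu_{\leq p}: |(X_{\down p})_{\PP_{\leq p}}| \to X_{\down p}$ is a weak equivalence. At a successor $p$, both sides are pushouts of the stages indexed by $\PP_{<p}$, and the cofibration hypothesis on $\colim[q<p] X_{\down q} \to X_{\down p}$ ensures these are homotopy pushouts; the induction propagates, and the ambient $\mu: |X_\PP| \to X$ is obtained as a colimit of the $\mu_{\leq p}$. For (i), the $X_{\down p}$ form an open cover of the paracompact space $X$, and each fiber $\mu^{-1}(x) = |\PP_{\geq \varphi(x)}|$ has a minimum element $\varphi(x)$ and is therefore contractible; using partitions of unity on the smooth manifold one shows that $\mu$ is a quasifibration, which together with contractibility of the fibers yields the desired weak equivalence.

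The main obstacle I expect is (a)(i): without any cofibration or CW data, the quasifibration property must be extracted directly from the openness of the $X_{\down p}$ and the paracompactness of the smooth manifold $X$, and organizing the local-to-global deformations cleanly enough to apply a quasifibration criterion (e.g.~Dold-Thom) is the technical heart of the argument.
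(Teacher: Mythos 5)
Your parts (b) and (c) are exactly the paper's route: identify $\abs{X_\PP}$ with the Bousfield--Kan/bar model of the homotopy colimit of $p\mapsto X_{\down p}$, then note that the levelwise map to the constant diagram on a point is a pointwise equivalence. Nothing to add there.

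For (a)(ii) your transfinite induction is the right idea but is less tight than what the paper does. A well-founded poset has no ``successor'' structure in the sense your outline leans on, and ``both sides are pushouts'' has to be replaced by an argument at limit ranks and at ranks with many incomparable elements. The paper cuts through this by observing that the well-founded degree function makes $\PP$ into a generalised Reedy category with all morphisms positive, so that the Reedy model structure on $\Top^\PP$ coincides with the projective one; the latching-map cofibration hypothesis in (ii) is then precisely projective cofibrancy, which (together with the resulting cofibrancy of each $X_{\down p}$) gives $\colim = \hocolim$ in one stroke, and (b) then identifies that hocolim with $\abs{X_\PP}$. Your inductive argument can be made to work, but the paper's is cleaner and avoids bookkeeping.

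The genuine divergence, and the place where your outline has a gap, is (a)(i). You propose to show that $\mu$ is a quasifibration with contractible fibres $\abs{\PP_{\geq \varphi(x)}}$ using partitions of unity. The fibre computation is correct. But the openness hypothesis makes $\varphi$ upper semicontinuous in the poset sense (if $\varphi(x)\leq p$ then $\varphi(y)\leq p$ for $y$ near $x$), so the fibres of $\mu$ can jump to \emph{strictly larger} posets on arbitrarily small neighbourhoods of $x$, and there is no local triviality or product structure in sight. ``Partitions of unity $\Rightarrow$ quasifibration'' is not a standard implication, and I do not see how to run the Dold--Thom criterion here without imposing extra structure. The paper avoids the fibres of $\mu$ entirely: it works in simplicial presheaves on $X$, compares the simplicial object $Y_n=\coprod_{p_0\leq\cdots\leq p_n}X_{\down p_0}$ (whose realisation is $\abs{X_\PP}$) with the standard hypercover $Z_n=\coprod_{q_0,\ldots,q_n}X_{\down q_0}\cap\cdots\cap X_{\down q_n}$, observes that $Y\hookrightarrow Z$ is a stalkwise equivalence because the stalk of $Y$ at $x$ is the nerve of $\varphi(x)/\PP$ (contractible), and invokes Dugger--Isaksen to conclude $\abs{Z}\to X$ is a weak equivalence. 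This is a nerve-of-an-open-cover argument rather than a fibrewise one, and it is the robust route. If you want a fibrewise-flavoured alternative that actually closes the gap, you would need something like Segal's or tom Dieck's version of the nerve theorem (or a microfibration/Smale-type criterion applied to $\mu$), not raw quasifibration plus partitions of unity.
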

\begin{proof}
  Recall first that if each $X_{\down p}$ is cofibrant in the Kan-Quillen model structure on spaces, then $\abs{X_{\mathbb{P}}}$ is a model (the classical Bousfield-Kan model, see e.g.~\cite{HM22}, Example 10.34) for the homotopy colimit of the diagram $p\mapsto X_{\down p}$ of spaces over $\mathbb{P}$. This shows part~\ref{lemeqb}. Part~\ref{lemeqc} is a direct consequence of this, as the pointwise weak equivalence of diagrams $X_{\down p}\to \ast$ induces a weak equivalence $\hocolim[\mathbb{P}]X_{\down \bullet}\to \hocolim[\mathbb{P}]\ast$, i.e.\ $\abs{X_{\mathbb{P}}}\isoto \abs{\mathbb{P}}$, proving the lemma.

  For part~\ref{lemeqa}\ref{lemeqaii}, we proceed by a similar argument: if $\PP$ is well-founded, then the generalised Reedy model structure on functors $\PP\to\Top$ exists, where we take all morphisms in $\PP$ to be positive and define the degree of $p$ via transfinite recursion as $\deg\br{p}=\sup_{q<p}\br{\deg\br{q}+1}$. As this makes all the matching objects trivial, it is the same as the projective model structure. Further, the condition in point~\ref{lemeqaii} is precisely that for a functor to be cofibrant in this model structure, hence its colimit $\colim[p]X_{\down p}=X$ is a model for its homotopy colimit, but so is $\abs{X_{\PP}}$.

  Finally, for part~\ref{lemeqa}\ref{lemeqai}, we argue as follows: we use the projective model structure on the category $\calp\br{X}$ of simplicial presheaves on $X$. We will think of the objects of this category as simplicial spaces over $X$ and note that we obtain a left Quillen functor $\abs{\bullet}\colon \calp\br{X}\to \Top/X$ given by geometric realisation (since each open subset of $X$ can be given a CW-structure and, so, is cofibrant). This model structure on $\calp\br{X}$ admits a Bousfield localisation $\calp^\prime\br{X}$ by hypercovers where the weak equivalences are stalkwise weak equivalences of simplicial sets (see Jardine~\cite{Jar87}) and $\abs{\bullet}$ factors as a left Quillen functor $\calp^\prime\br{X}\to \Top/X$ because it sends each hypercover to a weak equivalence, see Dugger and Isaksen~\cite{DI04}. Now suppose that each $X_{\down p}$ is open and consider the two objects $Y$ and $Z$ of $\calp\br{X}$ defined by
  \[ Y_n=\coprod_{p_0\leq\cdots\leq p_n}X_{\down p_0}, \qquad Z_n=\coprod_{q_0,\ldots, q_n}X_{\down q_0}\cap\ldots\cap X_{\down q_n} \]
  There is an evident inclusion $Y_\bullet\hookrightarrow Z_\bullet$ and $Z_\bullet$ is a standard hypercover, so $\abs{Z}\to X$ is a weak equivalence by~\cite{DI04}. The map $Y_\bullet\to Z_\bullet$ is also a weak equivalence, as the stalk of $Y_\bullet$ at a point $x$ is the nerve of $\varphi\br{x}/\PP$, hence also a contractible. Thus, we obtain a weak equivalence $\abs{Y_\bullet}\to X$.
\end{proof}

\begin{definition}
  We will call a map $X\to \PP$ from a topological space to a poset {\em sufficiently cofibrant\/} if it fulfils one of the conditions of part~\ref{lemeqa} of the previous lemma.
\end{definition}

\begin{remark}
  If $X$ is the geometric realisation of a simplicial set and each $X_{\down p}$ is the realisation of a simplicial subset, then condition~\ref{lemeqa}\ref{lemeqaii} is automatically satisfied, i.e.~the map $X\to \PP$ is sufficiently cofibrant.
\end{remark}

We can now apply this lemma to lax morphisms as defined in~\ref{defLax}.

\begin{prop}\label{propEquivalence}
  Let $\varphi\colon P\to Q$ be a lax morphism from an operad $P$ in spaces to an operad $Q$ in posets. Then $\varphi$ induces a zigzag
  \[ P\overset{\mu}{\leftarrow} \abs{R}\overset{\pi}{\to}\abs{Q} \]
  of operads in spaces. If $\varphi\colon P\br{A}\to Q\br{A}$ is sufficiently cofibrant for each finite set $A$, then the morphism $\mu$ is an equivalence. If for each finite set $A$ and each $q\in Q\br{A}$ the space $P\br{A}_{\down q}\coloneqq \cb{p\in P\br{A}\;\vert\; \varphi\br{p}\leq q}$ is a weakly contractible retract of a CW-complex, then $\pi\colon \abs{R}\to \abs{Q}$ is a weak equivalence.
\end{prop}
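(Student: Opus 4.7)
The plan is to carry out the construction preceding Lemma~\ref{lemEquivalence} componentwise and then read off both equivalence statements directly from that lemma. Concretely, for each finite set $A$ I would define $R\br{A}\coloneqq P\br{A}_{Q\br{A}}$, the topological poset of pairs $\br{p,q}\in P\br{A}\times Q\br{A}$ with $\varphi\br{p}\leq q$, ordered by $\br{p,q}\leq\br{p',q'}$ iff $p=p'$ and $q\leq q'$.

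The main verification is that the collection $\cb{R\br{A}}$ inherits an operad structure in topological posets, with units $\br{1_a,1_a}$, diagonal symmetric actions, and composition $\br{p,q}\circ_a\br{p',q'}\coloneqq\br{p\circ_a p',\, q\circ_a q'}$. The nontrivial point is that this composition lands back in $R\br{A\sbr{B/a}}$: by laxness one has $\varphi\br{p\circ_a p'}\leq \varphi\br{p}\circ_a\varphi\br{p'}$, and the right-hand side is bounded by $q\circ_a q'$ because composition in the operad $Q$ is monotone in each argument. The operad axioms and the fact that the two projections $P\leftarrow R\to Q$ are strict morphisms of operads in topological posets are then inherited from $P$ and $Q$ separately. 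Applying the geometric realisation functor (which, as realisation of topological nerves, preserves finite products and hence operadic structures) then produces an operad $\abs{R}$ in spaces together with morphisms of operads $P\xleftarrow{\mu}\abs{R}\xrightarrow{\pi}\abs{Q}$, where I use that $\abs{P}=P$ because $P$ is given the discrete partial order.

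The two equivalence claims now reduce to componentwise instances of Lemma~\ref{lemEquivalence}. Sufficient cofibrancy of each $\varphi_A$ yields a weak equivalence $\mu_A\colon\abs{R\br{A}}\to P\br{A}$ by part~\ref{lemeqa}, and weak contractibility of each $P\br{A}_{\down q}$, combined with the CW-retract hypothesis, gives a weak equivalence $\pi_A\colon\abs{R\br{A}}\to\abs{Q\br{A}}$ by parts~\ref{lemeqb} and~\ref{lemeqc}.

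I expect the main obstacle to be the bookkeeping required to confirm that the lax inequality interacts correctly with all of the operad axioms on $R$ --- in particular, that iterated compositions and symmetry actions preserve the defining relation $\varphi\br{p}\leq q$ --- and that realisation of topological nerves is symmetric-monoidal enough to transport the operad structure from $R$ to $\abs{R}$. Both points are however routine consequences of the lax definition and of standard properties of geometric realisation.
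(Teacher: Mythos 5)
Your proposal is correct and follows essentially the same route as the paper: the paper's intermediate operad $\abs{R}$ is precisely the realisation of the nerve of the topological-poset operad $R\br{A}=P\br{A}_{Q\br{A}}$ that you construct, and both arguments conclude by applying Lemma~\ref{lemEquivalence} componentwise. Your write-up is merely more explicit about why the lax inequality makes $R$ an operad and why realisation transports the operad structure, details the paper leaves implicit.
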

\begin{proof}
  This follows by applying the previous lemma to each of the maps $P\br{A}\to Q\br{A}$ --- the intermediate operad $\abs{R}$ is the realisation of the operad in simplicial spaces given by \[R_n\br{A}=\cb{\br{p,q_1,\dots,q_n}\in P\br{A}\times Q\br{A}^ n\;\vert\; \varphi\br{p}\leq q_1\leq \cdots \leq q_n}\]
\end{proof}

Summarising, we obtain the result

\begin{cor}\label{corEquivalence}
  Let $\varphi:P\to Q$ be a lax morphism from an operad $P$ in spaces to an operad $Q$ in posets. If for each $A$ and each $q\in Q\br{A}$ the space $\cb{p\in P\br{A}\;\vert\; \varphi\br{p}\leq q}$ is weakly contractible and the map $P\br{A}\to Q\br{A}$ is sufficiently cofibrant, then $\varphi$ induces an equivalence $P\simeq \abs{Q}$.
\end{cor}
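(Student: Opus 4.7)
The plan is to show that Corollary \ref{corEquivalence} is essentially a direct repackaging of Proposition \ref{propEquivalence}: both hypotheses of that proposition are assumed, so both halves of the zigzag it produces are weak equivalences.

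First, I would apply Proposition \ref{propEquivalence} to the given lax morphism $\varphi\colon P\to Q$ to produce the zigzag of operads in spaces
\[ P\overset{\mu}{\leftarrow} \abs{R}\overset{\pi}{\to}\abs{Q}, \]
where $R$ is the operad in simplicial spaces defined by
\[ R_n(A)=\{(p,q_1,\dots,q_n)\in P(A)\times Q(A)^n \mid \varphi(p)\leq q_1\leq\cdots\leq q_n\}. \]
This step uses only that $\varphi$ is a lax morphism in the sense of Definition \ref{defLax}, which is given.

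Second, I would verify that both $\mu$ and $\pi$ are componentwise weak equivalences, using the two hypotheses of the corollary in turn. The assumption that $P(A)\to Q(A)$ is sufficiently cofibrant for every finite set $A$ is exactly the hypothesis needed for the first statement of Proposition \ref{propEquivalence}, so $\mu$ is an equivalence. For $\pi$, the assumption that each fibre $P(A)_{\downarrow q}=\{p\in P(A)\mid\varphi(p)\leq q\}$ is weakly contractible (together with the implicit ambient hypothesis that these are retracts of CW-complexes, as in the convenient category of spaces we work in) gives the second statement of Proposition \ref{propEquivalence}, so $\pi$ is a componentwise weak equivalence as well.

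Combining these, the zigzag $P\xleftarrow{\mu}\abs{R}\xrightarrow{\pi}\abs{Q}$ is a zigzag of operad morphisms in which both maps are componentwise weak homotopy equivalences; by definition this exhibits $P\simeq\abs{Q}$. There is no real obstacle here --- all the analytic content (the two sufficient-cofibrancy criteria of Lemma \ref{lemEquivalence}(a), the identification of $\abs{R(A)}$ as a homotopy colimit in part (b), and the contractibility-of-fibres argument in part (c)) has already been carried out in Lemma \ref{lemEquivalence} and packaged operadically in Proposition \ref{propEquivalence}. The only thing to check is that both hypotheses of the proposition line up verbatim with the two hypotheses stated in the corollary, which they do.
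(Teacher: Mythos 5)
Your proposal is correct and takes the same route as the paper, which presents this corollary with no separate proof at all—``Summarising, we obtain the result''—as the conjunction of the two halves of Proposition~\ref{propEquivalence}. One small imprecision worth fixing: you invoke the ``retract of a CW-complex'' requirement as an ``implicit ambient hypothesis ... as in the convenient category of spaces we work in,'' but a convenient category of spaces (say compactly generated weak Hausdorff spaces) contains many spaces that are not retracts of CW-complexes, so this does not come for free from the ambient setting. The correct source is the other hypothesis already in the corollary: ``sufficiently cofibrant'' implies that each $P\br{A}_{\down q}$ is a retract of a CW-complex, in case~\ref{lemeqai} because open subsets of a smooth manifold admit CW-structures, and in case~\ref{lemeqaii} by well-founded induction using that each latching map $\colim_{q<p}X_{\down q}\to X_{\down p}$ is a cofibration (so in particular $\emptyset\to X_{\down p}$ is a cofibration at minimal $p$, and cofibrancy propagates upward). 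With that justification substituted, the argument is complete and matches the paper exactly.
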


\section{The Complete Graphs Operad}\label{secCubes}
\begin{definition}
  A directed graph is called {\em complete}, if the underlying undirected graph is. In graph theory, complete directed graphs are sometimes referred to as {\em tournaments}. Let $W$ be a poset whose elements we will call {\em weights}. A graph is said to be {\em weighted\/} by $W$ if to each edge a weight $w\in W$ is assigned. We will consider weighted directed complete graphs $g$.

  If $a\to b$ is a directed edge in $g$, we write $w\br{a,b}$ for the weight and write $a\overset{w}{\to}b$, if $w=w\br{a,b}$. If $g$ and $h$ are graphs on the same set $A$ of vertices, we say that $g\leq h$, if, for any $a,b\in A$, if $a\overset{v}{\to}b$ in $g$, then either $a\overset{w}{\to}b$ in $h$ with $v\leq w$, or $b\overset{w}{\to}a$ in $h$ with $v<w$ strictly.
  In other words, given a graph $h$, one can get a smaller graph by reducing weights, and possibly reversing arrows if the weight has been reduced strictly. In this way, directed weighted complete graphs on a finite set of vertices form a poset.
\end{definition}

\begin{definition}[Complete Graphs Operad]
  The {\em complete graphs operad\/} is the operad, in posets, where $\calg_n\br{A}$ is the poset of {\em cycle-free\/} or {\em acyclic\/} directed weighted complete graphs on the set $A$ of vertices, weighted by the set $\cb{1,\dots,n}$ with the natural order. The condition that the graph is acyclic means that the directions of the graph define a linear order on the set $A$ of vertices.

  The action $\sigma_\ast:\calg_n\br{B}\to \calg_n\br{A}$ for $\sigma: B\isoto A$ is the obvious one. The operad composition is defined as follows: if $g\in \calg_n\br{A}$ and $h\in \calg_n\br{B}$, then, for $a\in A$, the graph $g\circ_a h$ on the set $A\sbr{B/a}$ is obtained by replacing the vertex $a$ in $g$ by the graph $h$ and equipping the resulting graph with the natural weights and directions. Explicitly, in $g\circ_a h$, we have:
  \begin{align*}
    \begin{cases}
      a^ \prime\overset{w}{\to}a^ {\prime\prime}, &a^\prime,a^{\prime\prime}\in A\setminus \cb{a}\text{ and } a^\prime\overset{w}{\to}a^ {\prime\prime}\text{ in }g\\
      b\overset{w}{\to}b^ {\prime}, &b,b^{\prime}\in B \text{ and } b\overset{w}{\to}b^{\prime}\text{ in }h\\
      a^ \prime\overset{w}{\to}b, &a^{\prime}\in A\setminus \cb{a},b\in B\text{ and } a^\prime\overset{w}{\to}a\text{ in }g\\
      b\overset{w}{\to}a^ {\prime}, &a^\prime\in A\setminus \cb{a}, b\in B\text{ and } a\overset{w}{\to}a^ {\prime}\text{ in }g\\
    \end{cases}
  \end{align*}

  \[
    \begin{tikzpicture}[anchor=base, baseline]
      \node[circle, fill=black, inner sep=0pt, minimum size=5pt, label=left:{$a_1$}] (0) {};
      \node[circle, fill=black, inner sep=0pt, minimum size=5pt, label=right:{$a_2$}] (1) [right=of 0] {};
      
      \draw[-{Latex[scale=1.5]}] (0) -- (1) node[midway, label=above:{$1$}] {};
    \end{tikzpicture}
    \quad\circ_{a_2}\quad
    \begin{tikzpicture}[anchor=base, baseline={(0,-0.5)}]
      \node[circle, fill=black, inner sep=0pt, minimum size=5pt, label=above:{$b_1$}] (0) {};
      \node[circle, fill=black, inner sep=0pt, minimum size=5pt, label=below:{$b_2$}] (1) [below=of 0] {};
      
      \draw[-{Latex[scale=1.5]}] (0) -- (1) node[midway, label=right:{$2$}] {};
    \end{tikzpicture}\qquad = \qquad
    \begin{tikzpicture}[anchor=base, baseline]
      \node[circle, fill=black, inner sep=0pt, minimum size=5pt, label=left:{$a_1$}] (0) {};
      \node[circle, fill=black, inner sep=0pt, minimum size=5pt, label=above:{$b_1$}] (1) [above right=of 0] {};
      \node[circle, fill=black, inner sep=0pt, minimum size=5pt, label=below:{$b_2$}] (2) [below right=of 0] {};
      
      \draw[-{Latex[scale=1.5]}] (0) -- (1) node[midway, label=above left:{$1$}] {};
      \draw[-{Latex[scale=1.5]}] (0) -- (2) node[midway, label=below left:{$1$}] {};
      \draw[-{Latex[scale=1.5]}] (1) -- (2) node[midway, label=right:{$2$}] {};
    \end{tikzpicture}
  \]
  giving a well-defined element in $\calg_n\br{A\sbr{B/a}}$. This composition is clearly order-preserving, so gives a composition operation $\calg_n\br{B}\times \calg_n\br{A}\to \calg_n\br{A\sbr{B/a}}$.
\end{definition}

\begin{definition}[Variation]
  The {\em extended\/} complete graphs operad $\calg_n^ {\mathrm{ext}}$ is defined in the same way, except that the acyclicity condition is weakened to the condition that there are no cycles of {\em uniform weight}.
\end{definition}

\begin{remark}
  We will write $\abs{\calg_n}$ and $\abs{\calg_n^{\mathrm{ext}}}$ for the operad in topological spaces obtained by taking the classifying spaces of the posets involved (viewed as categories).
\end{remark}

\begin{theorem}\label{thmGraphs}
  The inclusion $\calg_n\hookrightarrow \calg_n^ {\mathrm{ext}}$ induces an equivalence of topological operads $\abs{\calg_n}\hookrightarrow\abs{\calg_n^ {\mathrm{ext}}}$.
\end{theorem}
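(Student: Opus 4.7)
The plan is to apply a Quillen Theorem~A-style argument to the inclusion of posets $\calg_n(A) \hookrightarrow \calg_n^{\mathrm{ext}}(A)$ for each finite set $A$. Equivalently, one may apply Corollary~\ref{corEquivalence} to the lax morphism $\varphi\colon\abs{\calg_n}\to\calg_n^{\mathrm{ext}}$ that sends a point of the realisation (lying in the interior of a simplex corresponding to a chain in $\calg_n(A)$) to the top vertex of that simplex. Either way, the theorem reduces to showing that for every $g\in\calg_n^{\mathrm{ext}}(A)$ the downset
\[ D_g=\cb{h\in\calg_n(A) \;:\; h\leq g}\]
has weakly contractible nerve. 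If $g$ is already acyclic, $g$ is a maximum of $D_g$ and the nerve is a cone, hence contractible.

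For cyclic $g$ the analysis is as follows. The inequality $h\leq g$ allows a direction reversal on an edge $e$ only at the cost of a strict decrease in weight, so any $h\in D_g$ must agree with $g$ on every weight-$1$ edge. Since $g\in\calg_n^{\mathrm{ext}}$ forbids uniform-weight cycles, its weight-$1$ subgraph is in particular acyclic, and the set $\mathcal{L}(g)$ of its linear extensions --- the \emph{feasible tournaments} for $g$ --- is non-empty. For each $\pi\in\mathcal{L}(g)$, the elements of $D_g$ with tournament $\pi$ form a product of finite chains (a weight in $\cb{1,\dots,w_e}$ on each edge $e$ where $\pi$ agrees with $g$, and in $\cb{1,\dots,w_e-1}$ otherwise), hence a contractible sub-poset; everything then reduces to understanding how these fibres are glued.

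To assemble the fibres into a contractible whole I would organise $D_g$ by the set $S(h)$ of edges on which the tournament of $h$ differs from that of $g$, noting that every covering relation of $D_g$ either lowers one weight while preserving the tournament, or flips a single edge in $S(h)$ while strictly decreasing its weight. I would then either (i) construct an explicit poset deformation retract of $D_g$ onto a combinatorial core indexed by the poset of feasible flip sets, or (ii) exhibit a discrete Morse matching on $D_g$ leaving a single critical cell. In the smallest non-trivial case ($n=2$, $|A|=3$) a direct enumeration shows $D_g$ to be a contractible tree. The main obstacle is carrying the argument through uniformly in $g$; my preferred strategy is an induction on an auxiliary complexity (such as the number of edges of $g$ lying in some cycle and carrying weight $\geq 2$), using the acyclic case as base and the fact that resolving one such edge by a single flip or weight drop keeps us inside $\calg_n^{\mathrm{ext}}$.
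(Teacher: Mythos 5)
Your setup matches the paper's: both reduce, via Quillen's Theorem~A (equivalently Corollary~\ref{corEquivalence}), to showing that the downset $D_g=\varphi/g=\cb{h\in\calg_n(A)\mid h\leq g}$ has contractible nerve for every $g\in\calg_n^{\mathrm{ext}}(A)$, and both dispose of the acyclic case by noting $g$ is then a maximum of $D_g$. The observation that any $h\leq g$ must agree with $g$ on weight-$1$ edges, and the description of the fibres of $D_g$ over fixed tournaments as products of chains, are correct and potentially useful.

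However, the core of the argument — the contractibility of $D_g$ for cyclic $g$ — is not actually carried out. You list three possible strategies (a deformation retract onto a ``combinatorial core'', a discrete Morse matching, an induction on the number of cyclic edges of weight $\geq 2$) but do not execute any of them, and you acknowledge that carrying the argument through uniformly in $g$ is the main obstacle. This is precisely where the real work lies, and it is nontrivial. In particular, you gesture at the need to show that ``resolving'' an edge ``keeps us inside $\calg_n^{\mathrm{ext}}$'', but offer no argument; this is exactly the delicate point. The paper handles it by fixing a carefully chosen \emph{minimal} cycle $c$ (minimal for the lexicographic order on the weight-count vectors $\mu(c)=(\mu_n(c),\ldots,\mu_1(c))$), covering $\varphi/g$ by the subposets $\varphi/g_U$ where $U$ ranges over nonempty subsets of the weight-$\geq 2$ positions of $c$ and $g_U$ reverses and decrements those edges, checking that the intersections of this cover are again of the same form, and then proving (via a contradiction with the minimality of $\mu(c)$) that each $g_U$ has no uniform-weight cycle, so that the induction on the poset order applies. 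Without an analogue of that lemma your induction has no floor to stand on: a naive ``flip one edge'' move can introduce a uniform-weight cycle, and then the resulting graph is outside $\calg_n^{\mathrm{ext}}$, $\varphi/g_U$ need not be a downset of the right kind, and the inductive hypothesis cannot be invoked.

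So the proposal identifies the correct reduction but leaves a genuine gap at the central step. To close it you would need, at minimum, (i) a concrete covering of $D_g$ by subposets known to be contractible by induction, (ii) a verification that intersections of members of the cover are again contractible (or empty in a controlled way), and (iii) a proof that the graphs indexing the cover remain in $\calg_n^{\mathrm{ext}}$ — and the last point forces some global choice like the minimal cycle $c$ and its weight-profile ordering.
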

\begin{proof}
  Let us write $\varphi:\calg_n\hookrightarrow \calg_n^ {\mathrm{ext}}$ for the inclusion. Fix a set $A$. By Quillen's Theorem A~\cite{Qui73}, it suffices to show that, for each $g\in \calg^{\mathrm{ext}}_n\br{A}$, the space $\abs{\varphi/g}$ is contractible (in particular, it is also non-empty). Of course, this is clear if $g\in \calg_n$ (and, hence, for all minimal $g$).

  We proceed by induction on the order on $\calg^ {\mathrm{ext}}_n\br{A}$. Suppose that $g\in \calg_n^ {\mathrm{ext}}\br{A}$ does not belong to $\calg_n\br{A}$, and that we have proved that $\varphi/h$ is contractible for all $h\in \calg_n^ {\mathrm{ext}}\br{A}$ with $h<g$.

  By assumption, $g$ contains cycles, although none of uniform weight. Let us order the cycles as follows: for a cycle $c$ in $g$, write $\mu_i\br{c}$ for the number of edges of weight $i$ in $c$ and $\mu\br{c}\coloneqq \br{\mu_n\br{c},\mu_{n-1}\br{c},\dots,\mu_1\br{c}}$. We write $c\leq d$, if $\mu\br{c}\leq \mu\br{d}$ in the lexicographical order on $\N^ n$. Now, fix a minimal cycle $c$ in $g$ in this ordering, say $c=\br{a_0\overset{w_1}{\to}a_1\overset{w_2}{\to}\dots\overset{w_{k-1}}{\to} a_{k-1}\overset{w_k}{\to}a_k=a_0}$.
  Let us consider the poset $\varphi/g$. If $h\leq g$ belongs to $\calg_n\br{A}$, then $h$ contains no cycles, so $h$ must have reversed at least one edge in $c$ (necessarily of weight $>1$). Let us consider finite subsets $\emptyset\neq U\subseteq \cb{1,\dots,k}$ for which the weight $w_i$ for $i\in U$ on $a_{i-1}\overset{w_i}{\to}a_i$ in $c$ is strictly larger than $1$, and write $g_U$ for the graph obtained from $c$ by reversing the edge $a_{i-1}\overset{w_i}{\to}a_i$ in $c$ and replacing the weight $w_i$ by $w_{i}-1$ for all $i\in U$. Then $g_{U}<g$ and
  \[ \varphi/g=\bigcup_{U}\varphi/g_U \]
  The intersection of $\varphi/g_U$ and $\varphi/g_V$ for two such subsets is again of the form $\varphi/g_W$, as one easily checks. Thus, to show that $\abs{\varphi/g}$ is contractible, it suffices to show that each $g_U$ belongs to $\calg_n^{\mathrm{ext}}\br{A}$, because then the induction hypothesis will give that each $\abs{\varphi/g_U}$ is contractible, hence so is $\abs{\varphi/g}$ (being covered by contractible subspaces with contractible and, in particular, non-empty intersections). We formulate this as the following lemma, which then finishes the proof.
\end{proof}

\begin{lemma}
  Each graph of the form $g_U<g$ belongs to $\calg_n^{\mathrm{ext}}\br{A}$.
\end{lemma}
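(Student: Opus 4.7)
My plan is to argue by contradiction. Suppose $g_U$ contains a uniform-weight cycle $c'$ of some weight $v$; I will construct from $c'$ a cycle in $g$ strictly smaller than $c$ in the lex order on $\mu$, violating the minimality of $c$. First, $c'$ must traverse at least one reversed edge, for otherwise $c'$ is already present in $g$ and contradicts $g \in \calg_n^{\mathrm{ext}}\br{A}$. Let $U' \subseteq U$ be the set of indices whose reversed edge is used by $c'$, written in cyclic order $i_1 < \dots < i_s$ inherited from $c$, with $s = \abs{U'} \geq 1$; since the reversed edge at position $i \in U'$ has weight $v$ in $g_U$, we must have $w_i = v + 1$ in $g$ for every such $i$. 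The unchanged portions of $c'$ split it into $s$ weight-$v$ paths $Q_1, \dots, Q_s$ in $g$, with $Q_t$ running from $a_{i_t - 1}$ to $a_{i_{t+1}}$ (cyclic indices $i_{s+1} = i_1$).

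The main construction is, for each $t$, the closed walk $W_t$ in $g$ formed by concatenating $Q_t$ with the forward sub-path of $c$ from $a_{i_{t+1}}$ around back to $a_{i_t - 1}$. This forward sub-path uses precisely the edges of $c$ at positions outside the cyclic interval $\sbr{i_t, i_{t+1}}$. If every $W_t$ were trivial we would have $i_{t+1} \equiv i_t - 1 \pmod{k}$ for every $t$, which forces $s = k$ and makes $c$ itself a uniform weight-$\br{v+1}$ cycle, contradicting $g \in \calg_n^{\mathrm{ext}}\br{A}$. So I fix a $t$ with $W_t$ non-trivial.

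A direct count of weights gives
\[
  \mu_p\br{W_t} \;=\; \abs{Q_t}\,\sbr{p = v} \;+\; \mu_p\br{c} \;-\; \abs{\cb{j \in \sbr{i_t, i_{t+1}} : w_j = p}}
\]
for every weight $p$. For $p > v + 1$ this yields $\mu_p\br{W_t} \leq \mu_p\br{c}$. At $p = v + 1$ the interval $\sbr{i_t, i_{t+1}}$ always contains $i_t$, and also $i_{t+1}$ when $s \geq 2$, each of weight $v + 1$, so $\mu_{v+1}\br{W_t} < \mu_{v+1}\br{c}$ strictly. Combining these inequalities yields $\mu\br{W_t} < \mu\br{c}$ in lex order. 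Since $W_t$ is non-trivial it contains a simple directed cycle $\tilde c_t$ in $g$, and $\mu\br{\tilde c_t} \leq \mu\br{W_t}$ coordinatewise gives $\mu\br{\tilde c_t} < \mu\br{c}$ in lex, contradicting the minimality of $c$.

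The main thing to verify carefully is the formula for $\mu_p\br{W_t}$ and the observation that the excluded interval $\sbr{i_t, i_{t+1}}$ always contains at least one reversed index (hence a weight-$\br{v+1}$ edge), which is what produces the strict drop at $p = v + 1$. The minor subtlety is guaranteeing that some $W_t$ is non-trivial; this is handled by the argument above, where collapse of all the $W_t$ would force $c$ to be uniform and so contradict $g \in \calg_n^{\mathrm{ext}}\br{A}$.
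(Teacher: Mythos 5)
Your overall strategy matches the paper's: starting from a uniform cycle $c'$ in $g_U$, you splice segments of $c'$ with complementary arcs of $c$ to build closed walks in $g$ with strictly smaller $\mu$-vector than $c$, yielding a contradiction with minimality. You are actually more careful than the paper in one respect: you make the non-triviality of some $W_t$ explicit (the ``all $W_t$ trivial forces $s=k$'' observation), a point the paper leaves implicit.

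However, there is a genuine gap. You write $U'=\cb{i_1<\dots<i_s}$ in the cyclic order \emph{inherited from} $c$, and then assert that $Q_t$ runs from $a_{i_t-1}$ to $a_{i_{t+1}}$. This silently assumes that the reversed edges occur along $c'$ in the same cyclic order in which their indices occur along $c$, which is unjustified and in general false. For instance, if $i$ and $i-1$ both lie in $U'$, then since $c'$ is a simple cycle the unique in-edge of $a_{i-1}$ on $c'$ is $a_i\to a_{i-1}$ and its unique out-edge is $a_{i-1}\to a_{i-2}$, so these two reversed edges are traversed \emph{in decreasing order} of index; more generally (e.g.\ $U'=\cb{2,3,5}$), the resulting cyclic order on $c'$ can genuinely differ from the one on $c$. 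With the wrong $Q_t$, the counting identity for $\mu_p(W_t)$ is also wrong as written. Fortunately, the fix is local: let $\sigma(1),\dots,\sigma(s)$ be the indices of $U'$ listed in the cyclic order \emph{in which $c'$ traverses their reversed edges}, define $Q_t$ to run from $a_{\sigma(t)-1}$ to $a_{\sigma(t+1)}$, and replace $\sbr{i_t,i_{t+1}}$ by the cyclic interval $\sbr{\sigma(t),\sigma(t+1)}$. That interval still contains $\sigma(t)$ (of weight $v+1$), so the strict drop at $p=v+1$ survives, and the ``all $W_t$ trivial'' argument gives $\sigma(t+1)\equiv\sigma(t)-1$ for all $t$, hence $s\equiv 0 \pmod k$ and $s=k$ exactly as before. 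With this correction your argument is sound and coincides in substance with the paper's, which likewise picks the ``next reversed edge on $d$'' rather than the next index in $U$.
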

\begin{proof}
  We must show that $g_U$ does not contain cycles of uniform weight. Suppose to the contrary that $d$ is a cycle of uniform weight in $g_U$. By replacing $U$ by a smaller subset of $\cb{1,\dots,k}$, we may assume that $d$ passes through each of the reversed edges $a_i\to a_{i-1}$ of $c$ for $i\in U$. Pick one such $i\in U$ and let $j$ be the index of the next reversed edge $a_j\to a_{j-1}$ on the cycle $d$. (So if $i<j$ then no number between $i$ and $j$ belongs to $U_j$ and if $i>j$ no number larger than $j$ or smaller than $i$ belongs to $U$, and if $U$ contains just one element, then $i=j$). But then the segments $a_j\to \dots\to a_{i-1}$ of $c$ and $a_{i-1}\to \dots \to a_j$ of $d$ together form a cycle $e$ in the original graph $g$ with $\mu\br{e}<\mu\br{c}$, contradicting the minimality of $c$.
    \[
    \begin{tikzpicture}[scale=0.7]
      \node[circle, fill=black, inner sep=0pt, minimum size=3pt, label=left:{$a_{i-1}$}] (0) at (-2.92,1) {};
      \node[circle, fill=black, inner sep=0pt, minimum size=3pt, label=left:{$a_{i}$}] (1) at (-2.92,0) {};
      \node[circle, fill=black, inner sep=0pt, minimum size=3pt, label=right:{$a_{j}$}] (2) at (2.92,1) {};
      \node[circle, fill=black, inner sep=0pt, minimum size=3pt, label=right:{$a_{j-1}$}] (3) at (2.92,0) {};
      \draw[-{Latex[scale=1.5]}] (0)--(1);
      \draw[-{Latex[scale=1.5]}] (3)--(2);
      \draw[-{Latex[scale=1.5]}] (2) arc (10:170:3cm) node[midway, label=above:{$c$}] {};
      \draw[-{Latex[scale=1.5]}] (1) arc (190:350:3cm);
      \draw[-{Latex[scale=1.5]}] (3)--(1);
      \draw[-{Latex[scale=1.5]}] (0)--(2);
      \draw[-{Computer Modern Rightarrow[scale=2]}] (0.4,0.5) arc (0:-355:0.4cm) node[label=right:{$d$}] {};
      \draw[-{Computer Modern Rightarrow[scale=2]}] (0.4,2) arc (0:355:0.4cm) node[label=right:{$e$}] {};
    \end{tikzpicture}
  \]
\end{proof}

\section{Complete Graphs and Little Cubes}

In this section, we wish to prove that the complete graphs operad is, indeed, an $E_n$-operad. For this, we will first construct a (lax) morphism
\[ \varphi: \calc_n\to \calg_n^ {\mathrm{ext}} \]
from the little $n$-cubes operad to the extended complete graphs operad.

\begin{definition}
  Let $c$ be a point in $\calc_n\br{A}$, given by some embedding of cubes $c_a$ for each $a\in A$. Further, let $i\in \cb{1,\dots,n}$. We say that two cubes $c_a$ and $c_b$ are {\em $i$-separated\/} if they lie on either side of the hyperplane $\cb{x\in \br{0,1}^ n \;\vert\; x_i=t}$ for some $t\in \br{0,1}$. In this case, we will say $c_a$ is {\em $i$-below\/} $c_b$ if $c_a\subseteq \cb{x\in \br{0,1}^ n \;\vert\; x_i<t}$.
  Since $c_a$ and $c_b$ are disjoint, they must be $i$-separated for at least one $i$.
\end{definition}

Now, let $\varphi\br{c}$ be the graph on the set $A$ of vertices whose edges are given as follows: we assign the weight $i$ to the edge between $a$ and $b$ if $i$ is the {\em smallest\/} number such that $c_a$ and $c_b$ are $i$-separated. Further, we direct the edge from $a$ to $b$ if $c_a$ is $i$-below $c_b$, and from $b$ to $a$, otherwise.
This construction defines a directed weighted complete graph which does not contain a cycle of uniform weight: indeed, such a circle would give us numbers $t_0,\dots,t_k\in \br{0,1}$ with $t_0<t_1<\cdots<t_k<t_0$, which is impossible.

\[
  \begin{tikzpicture}[scale=0.8]
    \draw (-1,-1) rectangle (3,3);
    \draw (-1,-1) rectangle (1,1.5);
    \draw (-1,1.5) rectangle (1,3);
    \draw (1,-1) rectangle (3,0.5);
    \draw (1,0.5) rectangle (3,3);
    \node at (0, 2.25) {$b$};
    \node at (0, 0.25) {$a$};
    \node at (2, 1.75) {$d$};
    \node at (2, -0.25) {$c$};
    \node at (4,1) {$\rightsquigarrow$};
    \node[circle, fill, label=below:$a$] (a2) at (5,1) {};
    \node[circle, fill, label=below left:$b$] (b2) at (7,1) {};
    \node[circle, fill, label=below:$c$] (c2) at (9,1) {};
    \node[circle, fill, label=below right:$d$] (d2) at (11,1) {};
    \draw[-{Latex[scale=1.5]}] (a2) -- (b2) node[midway, label=above:{$2$}] {};
    \draw[-{Latex[scale=1.5]}] (b2) -- (c2) node[midway, label=above:{$1$}] {};
    \draw[-{Latex[scale=1.5]}] (c2) -- (d2) node[midway, label=above:{$2$}] {};
    \draw[-{Latex[scale=1.5]}] (a2) arc (180:0:3) node[midway,label=above:{$1$}] {}; 
    \draw[-{Latex[scale=1.5]}] (a2) arc (180:0:2) node[midway,label=above:{$1$}] {};
    \draw[-{Latex[scale=1.5]}] (b2) arc (-180:0:2) node[midway,label=below:{$1$}] {};
  \end{tikzpicture}
\]

Thus, $\varphi$ gives a well-defined family of maps $\calc_n\br{A}\to \calg_n^ {\mathrm{ext}}\br{A}$.

\begin{lemma}
  The map $\varphi$ is a lax morphism of operads.
\end{lemma}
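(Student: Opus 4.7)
The plan is to verify the three conditions required of a lax morphism of operads: compatibility with units, symmetries, and the inequality $\varphi(c\circ_a c^\prime)\leq \varphi(c)\circ_a \varphi(c^\prime)$. The first two are essentially tautological: a unary embedding gives an empty graph, and permuting the indices in a cube configuration permutes the labels of the vertices in exactly the same way that $\calg_n^{\mathrm{ext}}$ acts, so $\varphi$ is equivariant. All the content is in the lax compatibility with composition.

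Fix $c\in\calc_n(A)$ and $c^\prime\in\calc_n(B)$, and set $\tilde c=c\circ_a c^\prime\in\calc_n(A[B/a])$. I would then compare the graphs $\varphi(\tilde c)$ and $\varphi(c)\circ_a\varphi(c^\prime)$ edge by edge, splitting into three cases according to the defining formula for the composition in $\calg_n^{\mathrm{ext}}$. For an edge between $a^\prime,a^{\prime\prime}\in A\setminus\{a\}$ the cubes $\tilde c_{a^\prime}=c_{a^\prime}$ and $\tilde c_{a^{\prime\prime}}=c_{a^{\prime\prime}}$ are untouched, so the edge agrees in both graphs. For an edge between $b,b^\prime\in B$, the new cubes are of the form $u\odot c^\prime_b+v$ and $u\odot c^\prime_{b^\prime}+v$ inside $c_a$; a positive affine rescaling in the $i$-th coordinate takes $i$-separating hyperplanes in $(0,1)$ to $i$-separating hyperplanes in $(v_i,u_i+v_i)\subseteq (0,1)$ and vice versa, so the smallest $i$ separating the two cubes, and the induced direction, are the same as for $c^\prime_b,c^\prime_{b^\prime}$. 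Hence the graph also agrees on these edges.

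The only case where the inequality is strict is the mixed case, $a^\prime\in A\setminus\{a\}$ and $b\in B$. Here the edge in $\varphi(c)\circ_a\varphi(c^\prime)$ carries the weight and direction of the edge between $a^\prime$ and $a$ in $\varphi(c)$, say weight $i$; on the other hand, since $\tilde c_b\subseteq c_a$, any $i$-separation between $c_{a^\prime}$ and $c_a$ descends to an $i$-separation between $c_{a^\prime}$ and $\tilde c_b$ with the same $i$-below/above relation. So if the minimal separating index for the pair $(c_{a^\prime},\tilde c_b)$ is still $i$, the edge is identical in the two graphs. If, however, this minimal index $j$ is strictly smaller than $i$ (which can happen because the smaller cube $\tilde c_b$ may be separated from $c_{a^\prime}$ by extra coordinates that did not separate the full cube $c_a$), then the poset relation in $\calg_n^{\mathrm{ext}}$ is still satisfied: regardless of the direction chosen for the weight-$j$ edge in $\varphi(\tilde c)$, the condition $j<i$ allows both the same-direction and the reversed-direction case in the definition of $\leq$. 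This gives $\varphi(\tilde c)\leq\varphi(c)\circ_a\varphi(c^\prime)$.

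The main obstacle is purely bookkeeping: one must be careful not to assume that the minimal separating index is preserved under composition, and to check that the definition of the partial order on $\calg_n^{\mathrm{ext}}$ precisely accommodates the possible discrepancy in direction that arises when the minimal separating coordinate strictly decreases. Once this is in place, the verification on each of the three kinds of edges is routine, and collecting them yields the required lax compatibility.
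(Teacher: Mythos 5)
Your argument is correct and takes essentially the same approach as the paper. The paper's own proof is a single sentence recording just the key observation for the mixed case — that if $c_a$ and $c_{a'}$ are $i$-separated then so are $d_b\subseteq c_a$ and $c_{a'}$ — and leaves the edge-by-edge bookkeeping and the verification that the poset order accommodates a strictly smaller minimal separating index (with possibly reversed direction) implicit; your write-up simply spells all of that out.
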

\begin{proof}
  Let $c\in \calc_n\br{A}, d\in \calc_n\br{B}$, and $a\in A$. Consider $c\circ_a d\in \calc\br{A\sbr{B/a}}$. If $c_a$ and $c_{a^ \prime}$ are $i$-separated, then $d_{b}$ and $c_{a^ \prime}$ are $i$-separated in $c\circ_a d$ for each $b\in B$ as the image of $d_{b}$ is contained in that of $c_a$.
\end{proof}

\begin{remark}
  \begin{enumerate}[itemindent=*,leftmargin=0pt]
  \item Note that strict equality does not necessarily hold, as can be seen by the example
    \[
      \begin{tikzpicture}[scale=0.5]
        \draw (0,0) rectangle (4,4);
        \draw (0,2) rectangle (4,4);
        \draw (2,0) rectangle (4,2);
        \node at (2,3) {$a$};
        \node at (3,1) {$b$};
        \node at (5,2) {$\circ_a$};
        \draw (6,0) rectangle (10,4);
        \draw (6,0) rectangle (8,4);
        \draw (8,0) rectangle (10,4);
        \node at (7,2) {$x$};
        \node at (9,2) {$y$};
        \node at (11,2) {$=$};
        \draw (12,0) rectangle (16,4);
        \draw (12,2) rectangle (14,4);
        \draw (14,2) rectangle (16,4);
        \draw (14,0) rectangle (16,2);
        \node at (13,3) {$x$};
        \node at (15,3) {$y$};
        \node at (15,1) {$b$};
        \node at (17,2) {$\rightsquigarrow$};
        \node[fill, circle, label=left:{$x$}] (x) at (19,2) {};
        \node[fill, circle, label=above right:{$y$}] (y) at (22,4) {};
        \node[fill, circle, label=below right:{$b$}] (b) at (22,0) {};
        \draw[-{Latex[scale=2]}] (x)--(y) node[midway, label=above left:{$1$}] {};
        \draw[-{Latex[scale=2]}] (x)--(b) node[midway, label=below left:{$1$}] {};
        \draw[-{Latex[scale=2]}] (b)--(y) node[midway, label=right:{$2$}] {};
        \node at (2,-2) {\rotatebox[origin=c]{270}{$\rightsquigarrow$}};
        \node at (8,-2) {\rotatebox[origin=c]{270}{$\rightsquigarrow$}};
        \node[fill, circle, label=below:{$a$}] (a2) at (0,-6) {};
        \node[fill, circle, label=below:{$b$}] (b2) at (4,-6) {};
        \draw[-{Latex[scale=2]}] (b2)--(a2) node[midway, label=above:{$2$}] {};
        \node at (5,-6) {$\circ_a$};
        \node[fill, circle, label=below:{$x$}] (x2) at (6,-6) {};
        \node[fill, circle, label=below:{$y$}] (y2) at (10,-6) {};
        \draw[-{Latex[scale=2]}] (x2)--(y2) node[midway, label=above:{$1$}] {};
        \node at (11,-6) {$=$};
        \node[fill, circle, label=left:{$x$}] (x3) at (13,-6) {};
        \node[fill, circle, label=above right:{$y$}] (y3) at (16,-4) {};
        \node[fill, circle, label=below right:{$b$}] (b3) at (16,-8) {};
        \draw[-{Latex[scale=2]}] (x3)--(y3) node[midway, label=above left:{$1$}] {};
        \draw[-{Latex[scale=2]}] (b3)--(x3) node[midway, label=below left:{$2$}] {};
        \draw[-{Latex[scale=2]}] (b3)--(y3) node[midway, label=right:{$2$}] {};
      \end{tikzpicture}
    \]
  \item Here, it is indeed necessary to use the extended complete graphs operad rather than the complete graphs operad, as can be seen by the example
    \[
      \begin{tikzpicture}[scale=0.7]
        \draw (0,0) rectangle (3,3);
        \draw (0,2) rectangle (1,3);
        \draw (0,1) rectangle (3,2);
        \draw (2,0) rectangle (3,1);
        \node at (0.5,2.5) {$a$};
        \node at (1.5,1.5) {$b$};
        \node at (2.5,0.5) {$c$};
        \node at (4,1.5) {$\rightsquigarrow$};
        \node[fill, circle, label=left:{$a$}] (a) at (6,1.5) {};
        \node[fill, circle, label=above right:{$b$}] (b) at (8,3) {};
        \node[fill, circle, label=below right:{$c$}] (c) at (8,0) {};
        \draw[-{Latex[scale=2]}] (b)--(a) node[midway, label=above left:{$2$}] {};
        \draw[-{Latex[scale=2]}] (c)--(b) node[midway, label=right:{$2$}] {};
        \draw[-{Latex[scale=2]}] (a)--(c) node[midway, label=below left:{$1$}] {};
      \end{tikzpicture}
    \]
  \end{enumerate}
\end{remark}

\begin{lemma}\label{lemDownCW}
  The sets $\varphi^ {-1}\br{\down g}$ are CW-complexes.
\end{lemma}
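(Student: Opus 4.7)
The plan is to identify $\varphi^{-1}\br{\down g}$ as a semi-algebraic subset of a Euclidean space, cut out by a Boolean combination of (non-strict) linear inequalities, and then to invoke the classical theorem that every semi-algebraic set admits a finite triangulation.

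First I would coordinatise $\calc_n\br{A}$ inside $\R^{2n|A|}$ by the data $\br{u_a^i, v_a^i}_{a \in A,\, 1 \le i \le n}$, so that each $c_a$ is the image of $\br{0,1}^n$ under $x \mapsto u \odot x + v$. The space $\calc_n\br{A}$ is then the open semi-algebraic subset of $\R^{2n|A|}$ defined by $0 < v_a^i$ and $v_a^i + u_a^i < 1$ for all $a$ and $i$, together with, for each pair $a \ne b$, the condition that some $i$ provides $v_a^i + u_a^i \le v_b^i$ or $v_b^i + u_b^i \le v_a^i$ (i.e., pairwise disjointness of the open rectangles).

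Second, I would rewrite the poset condition $\varphi\br{c} \le g$ edge by edge. Unwinding the definitions of $\varphi$ and of $\le$ on $\calg_n^{\mathrm{ext}}\br{A}$, the constraint on the pair $\cb{a,b}$ imposed by an edge $a \overset{w}{\to} b$ of $g$ is equivalent to the disjunction that \emph{either} there exists $i < w$ with $c_a, c_b$ being $i$-separated, \emph{or} $c_a$ is $w$-below $c_b$; each disjunct is cut out by non-strict linear inequalities in the coordinates $\br{u_a^i, v_a^i}$, hence is closed semi-algebraic. Intersecting over the finitely many edges of $g$ exhibits $\varphi^{-1}\br{\down g}$ as a closed semi-algebraic subset of $\calc_n\br{A}$, and hence as a semi-algebraic subset of $\R^{2n|A|}$.

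Finally, I would invoke the classical triangulation theorem for semi-algebraic sets (going back to \L ojasiewicz), which yields a finite CW-structure on $\varphi^{-1}\br{\down g}$. I do not anticipate any serious obstacle beyond careful bookkeeping: once the definitions are unwound, the argument becomes a standard appeal to real algebraic geometry. The only mildly delicate step is verifying the above reformulation of $\varphi\br{c} \le g$, which reduces to a short case analysis on whether the smallest coordinate separating $c_a$ from $c_b$ is strictly below $w$ or equal to $w$.
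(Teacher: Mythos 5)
Your proof is correct, and the reduction of the condition $\varphi(c)\leq g$ to, for each edge $a\overset{w}{\to}b$ of $g$, the disjunction ``($\exists\, i<w$ with $c_a,c_b$ $i$-separated) or ($c_a$ is $w$-below $c_b$)'' matches exactly the inequality description the paper writes down in the coordinates $(v(a),w(a))$. Where you diverge is in the last step: the paper makes the decomposition explicit, observing that these conditions cut out a union of (closed) simplices meeting along faces inside $[0,1]^{A+A}$, and that the further open constraint $v(a)<w(a)$ (coming from $\calc_n(A)$ itself, i.e., your $u_a^i>0$) just removes some faces of those simplices; you instead identify $\varphi^{-1}(\down g)$ as a bounded semi-algebraic (indeed semi-linear) set and quote the general triangulation theorem for semi-algebraic sets. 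Both routes arrive at essentially the same configuration --- a finite union of open simplices of a finite simplicial complex --- and both finish with the same mild hand-wave that such a set carries a CW-structure (strictly, what is used later in Proposition~\ref{propEquivalence} is only that these spaces are retracts of CW-complexes, which is immediate for a finite union of open simplices, so the informality is harmless in either case). The trade-off is the usual one: your argument is shorter and modular but imports a nontrivial theorem from real algebraic geometry, whereas the paper's version is self-contained, elementary, and makes the simplicial decomposition visible --- which is in the spirit of the ``stratification'' philosophy the paper emphasises, and arguably gives the reader more information about what the sets $\varphi^{-1}(\down g)$ actually look like.
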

\begin{proof}
  For a point $c\in \calc_n\br{A}$ and $a\in A$ given by $c_a=u{\br{a}}\odot x+v\br{a}$ with the pointwise multiplication $\odot$, we will define $w{\br{a}}\coloneqq u\br{a}+v\br{a}$ --- informally, $v\br{a}$ is the lower left and $w\br{a}$ the upper right corner of the cube. Using this notation, $c_a$ is $i$-below $c_b$ if and only if $w\br{a}_i\leq v\br{b}_i$. Thus, if $a\overset{i}{\to} b$ in $g$ and $\varphi\br{c}\leq g$, then either $w\br{a}_j\leq v\br{b}_j$ for $j\leq i$ or $w\br{b}_j\leq v\br{a}_j$ for some $j<i$ strictly.
  Thus, if we identify $\calc_n\br{A}$ with a subspace of $\sbr{0,1}^ {A+A}$ via $c\mapsto \br{v\br{a}, w\br{a}}_{a\in A}$ then
  \[ \varphi^ {-1}\br{\down g}=\bigcap_{a\overset{i}{\to}b\text{ in }g} \br{\bigcup_{j\leq i}\cb{w\br{a}_j\leq v\br{b}_j}\cup \bigcup_{j<i}\cb{w\br{b}_j\leq v\br{a}_j}}\]

  If all the sets of the form $\cb{\dots\leq \dots}$ on the right were to be interpreted in $\sbr{0,1}^ {A+A}$ instead, then this would be a union of simplices intersecting along their faces --- hence, a CW-complex. For those points coming $\calc_n\br{A}$, however, we have the additional requirement that $v\br{a}<w\br{a}$, pointwise. This means that $\varphi^ {-1}\br{\down g}$ is obtained by omitting faces of the form $v\br{a}_i=w\br{a}_i$, but such a space is still a CW-complex.
  \[
    \begin{tikzpicture}[yscale=0.5, xscale=0.8]
      \draw (0,0) -- (4,4);
      \draw (0,0) -- (4,-4);
      \draw[dotted] (4,4) -- (4,-4);
      \node at (5,0) {$\rightsquigarrow$};
      \draw (6,0) -- (10,4);
      \draw (6,0) -- (10,-4);
      \node at (9.5,0) {$\dots$};
      \draw (7,1) -- (7,-1);
      \draw (7,1) -- (7.75,0);
      \draw (7,-1) -- (7.75,0);
      \draw (7.75,1.75) -- (7.75,-1.75);
      \draw (7.75,1.75) -- (8.25,1);
      \draw (7.75,-1.75) -- (8.25,-1);
      \draw (7.75,0) -- (8.25,1);
      \draw (7.75,0) -- (8.25,-1);
      \draw (8.25,2.25) -- (8.25,-2.25);
      \draw (8.25,2.25) -- (8.6,1.75);
      \draw (8.25,-2.25) -- (8.6,-1.75);
      \draw (8.25,1) -- (8.6,1.75);
      \draw (8.25,-1) -- (8.6,-1.75);
      \draw (8.25,1) -- (8.6,0);
      \draw (8.25,-1) -- (8.6,0);
      \draw (8.6,2.6) -- (8.6,-2.6);
    \end{tikzpicture}
  \]
\end{proof}

\begin{remark}\label{remGraphsConf}
  Analogously to the definition of $\varphi$, we can define a map $\psi: \Conf_n\to \calg_n^ {ext}$ by sending a configuration $x$ on $A$ to the graph on the vertices $A$ with the edge between $a$ and $b$ having the weight $i$, if $i$ is the smallest number such that $x\br{a}_i\neq x\br{b}_i$. Further, we direct it from $a$ to $b$, if $x\br{a}_i<x\br{b}_i$, and from $b$ to $a$, otherwise. One can see that the homotopy equivalence $\gamma:\calc_n\to \Conf_n$ restricts to a homotopy equivalence $\varphi^ {-1}\br{\down g}\to \psi^ {-1}\br{\down g}$ --- the homotopy inverse is defined similarly to that of $\gamma$, but one needs to be a bit more careful about how one chooses the size of the cubes.

  The image of this map, in fact, lands in $\calg_n$: in $\psi\br{x}$, we will have an edge from $a\to b$ if $x\br{a}<x\br{b}$ in the lexicographical order of the coordinates --- this is a linear order and so there cannot be any cycles.
\end{remark}

The following proof is inspired by the argument of Kashiwabara in~\cite{Kas93}, where he shows the equivalence between the filtration stage $\Gamma_n$ of the Barratt-Eccles operad and the configuration spaces $\Conf_n$.

\begin{theorem}\label{propGisEn}
  The map $\psi: \Conf_n\to \calg_n$ induces an equivalence of preoperads $\Conf_n\simeq \abs{\calg_n}$.
\end{theorem}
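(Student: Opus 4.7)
My plan is to apply Corollary~\ref{corEquivalence} to the lax morphism $\psi\colon \Conf_n \to \calg_n$ of symmetric sequences. Although $\Conf_n$ is only a pre-operad, the corollary is verified arity-by-arity, so its conclusion will yield the desired equivalence of pre-operads $\Conf_n \simeq \abs{\calg_n}$ once we check, for every finite set $A$: (1) the map $\psi_A$ is sufficiently cofibrant, and (2) each preimage $\psi^{-1}(\down g) \subseteq \Conf_n(A)$ is weakly contractible.

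For (1), condition (a)(i) of Lemma~\ref{lemEquivalence} is unavailable since $\psi^{-1}(\down g)$ is not open in $\Conf_n(A)$ in general. I would use (a)(ii) instead: the poset $\calg_n(A)$ is finite, and each preimage is a semialgebraic subset of the smooth manifold $\Conf_n(A)$, defined by coordinate equalities and inequalities determined by $g$. A common semialgebraic triangulation of $\Conf_n(A)$ makes each $\psi^{-1}(\down g)$ a subcomplex and the inclusions $\colim[h<g]\psi^{-1}(\down h)\hookrightarrow \psi^{-1}(\down g)$ cofibrations.

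For (2), the heart of the proof, I would fix the $g$-linear order $a_1 < \cdots < a_m$ on $A$ and contract $\psi^{-1}(\down g)$ onto the diagonal configuration $\bar x_g$ defined by $\bar x_g(a_i) = (i/(m+1), \ldots, i/(m+1))$, which lies in $\psi^{-1}(\down g)$ because $\psi(\bar x_g)$ has all edges of weight $1$ in $g$-order. The deformation is a composition of $n$ stages applied in the order $k = n, n-1, \ldots, 1$: at stage $k$, coordinate $k$ of each $x(a_i)$ is linearly interpolated from its current value to the target $i/(m+1)$, with all other coordinates held fixed. The condition $\psi(x)\le g$ on a pair $(a_i, a_j)$ with $g$-edge of weight $w$ amounts to a lex-type inequality on the first $w$ coordinates, and a case analysis on $k$ versus $w$ shows that each stage preserves the condition; the key point is that once the higher stages have put coordinates $k+1, \ldots, n$ at their strictly increasing targets, these supply a ``cushion'' of positive differences that protects the lex condition at stage $k$ even when coordinate $k$ momentarily passes through an equality. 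The main obstacle is precisely this verification: a naive straight-line homotopy from $x$ directly to $\bar x_g$ can leave $\psi^{-1}(\down g)$ at intermediate times (for example, a pair with $g$-edge of weight $2$ beginning with both coordinates $1$ and $2$ ``reversed'' fails at the midpoint), so the top-down coordinate descent is essential for installing the safety separations before the lower coordinates are altered.
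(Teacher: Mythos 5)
Your proof is correct, but it takes a genuinely different and arguably simpler route than the paper's, and in one place your reasoning is off (though the conclusion there still holds).

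The paper's proof of Theorem~\ref{propGisEn} is a fibration argument: it fixes $a \in A$, studies the restriction map $\rho\colon \calg_n(A)\to\calg_n(B)$ with $B = A\setminus\{a\}$, computes the homotopy type of $\rho/g$ as a wedge of $(n-1)$-spheres, compares this with the Fadell--Neuwirth fibration $\R^n\setminus c(B)\to\Conf_n(A)\to\Conf_n(B)$ via Quillen's Theorem B, and induces on $|A|$. You instead apply Corollary~\ref{corEquivalence} directly by exhibiting each $\psi^{-1}(\down g)$ as contractible, which the paper only does for \emph{proper} $g$ (Theorem~\ref{thmCubesProperGraphs}), where it chooses the target of the retraction to be a point $x$ with $\psi(x)=g$; such a point exists precisely when $g$ is proper. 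Your key observation is that the ``staircase'' configuration $\bar x_g$ with strictly increasing coordinates in every direction lies in $\psi^{-1}(\down g)$ for every $g$ and can serve as the retraction target, and your top-down coordinate descent then works for all $g$. I have checked the case analysis: for a pair with $g$-weight $w$, a stage $k>w$ leaves the constraint untouched; a stage $k=w$ preserves it because the linear interpolation of a coordinate toward a target that has the same sign of difference preserves strict inequalities, while lower coordinates are frozen; and a stage $k<w$ is harmless because coordinate $k+1\le w$ is already at its strictly increasing target and supplies the witness needed by the lex-condition. This is sound, and it both subsumes Theorem~\ref{thmCubesProperGraphs} and bypasses the Quillen Theorem B machinery. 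It also gives a cleaner route to the corollary $|\PP_n|\simeq|\calg_n|$.

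One small inaccuracy: you claim $\psi^{-1}(\down g)$ is not open in $\Conf_n(A)$, forcing you to invoke semialgebraic triangulations and condition~(a)(ii). In fact these preimages \emph{are} open. For a $g$-edge $a\xrightarrow{w}b$, the condition $\psi(x)\le g$ restricted to that pair is
\[
\bigl(\exists\,l<w\colon x(a)_l\ne x(b)_l\bigr)\ \text{or}\ \bigl(x(a)_l=x(b)_l\text{ for all }l<w\text{ and }x(a)_w<x(b)_w\bigr),
\]
whose complement is $\{x(a)_l=x(b)_l\text{ for all }l<w\}\cap\{x(a)_w\ge x(b)_w\}$, an intersection of closed sets; so the condition is open, and $\psi^{-1}(\down g)$, being a finite intersection of such conditions with the open set $\Conf_n(A)$, is open. (Indeed the paper relies on exactly this openness in the proof of Theorem~\ref{thmCubesProperGraphs}.) Thus condition~(a)(i) of Lemma~\ref{lemEquivalence} applies directly, and the triangulation detour, while not wrong, is unnecessary.
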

\begin{proof}
  Let us fix some finite set $A$, an element $a\in A$, and write $B\coloneqq A\setminus\cb{a}$. Further, let $p\coloneqq \abs{B}$. Then there exists an obvious restriction map $\rho: \calg_n\br{A}\to \calg_n\br{B}$.
  Now, for a fixed element $g\in \calg_n\br{B}$, we wish to study the fibre of $\rho$ over it. Note that $g$ defines some linear order $b_1\to\dots\to b_p$ on $B$ --- any extension of $g$ to $A$ then places $a$ either to the very front of this linear order, or just behind some $b_i$ for $i\in \cb{1,\dots,p}$ (we will say that $a$ is in the $0$-th position, if it is placed at the very front, and in the $i$-th position, if it is placed right behind $b_i$). For $i\in \cb{0,\dots,p}$, let us denote by $g_i$ the graph that places $a$ into the $i$-th position and labels every edge into or out of it by $n$.
  \[ g_i:\quad
    \begin{tikzcd}
      b_1\ar[r] \ar[rrrd,"n"'] & \dots \ar[r]& b_i \ar[dr,"n"'] \ar[rr] && b_{i+1}\ar[r] & \dots\ar[r] & b_p\\
      &&&a\ar[ur,"n"'] \ar[urrr, "n"']
    \end{tikzcd}
  \]
  
  Further, for $k\in\cb{1,\dots,n}$ and $i\in\cb{0,\dots,p-1}$, we will write $h_i^+\sbr{k}$ for the graph obtained from $g_{i+1}$ by changing the label of the edge $b_{i+1}\to a$ to $k$, and $h_i^-\sbr{k}$ for the graph obtained from $g_{i}$ by changing the label of the edge $a\to b_{i+1}$ to $k$.
  \[ h^-_i\sbr{k}:\quad
    \begin{tikzcd}
      b_1\ar[r] \ar[rrrd,"n"'] & \dots \ar[r]& b_i \ar[dr,"n"'] \ar[rr] && b_{i+1}\ar[r] & \dots\ar[r] & b_p\\
      &&&a\ar[ur,"k"'] \ar[urrr, "n"']
    \end{tikzcd}
  \]

    \[ h^+_i\sbr{k}:\quad
    \begin{tikzcd}
      b_1\ar[r] \ar[rrrd,"n"'] & \dots \ar[r]& b_{i+1} \ar[dr,"k"'] \ar[rr] && b_{i+2}\ar[r] & \dots\ar[r] & b_p\\
      &&&a\ar[ur,"n"'] \ar[urrr, "n"']
    \end{tikzcd}
  \]

  With these definitions, we note the following properties:
  \begin{enumerate}[itemindent=*,leftmargin=0pt]
  \item $\rho/g=\bigcup_{i}\down g_i$.
  \item $\down g_s\cap \down g_{t+1}\subseteq \down g_t\cap \down g_{t+1}$ for $s\leq t$.
  \item $\down g_t\cap \down g_{t+1}=\down h_t^-\sbr{n-1}\cup \down h_t^{+}\sbr{n-1}$.
  \item $\down h_t^{-}\sbr{1}\cap \down h_t^{+}\sbr{1}=\emptyset$ and $\down h_t^{-}\sbr{k}\cap \down h_t^{+}\sbr{k}=\down h_t^{-}\sbr{k-1}\cup \down h_t^{+}\sbr{k-1}$ for $k>1$.
  \end{enumerate}

  This means that the classifying space of $\down g_t\cup \down g_{t+1}$ is an $\br{n-1}$-sphere and the sequence $\down g_0\subseteq \down g_0\cup \down g_1\subseteq \dots \subseteq \bigcup_i \down g_i$ corresponds to a sequence of attaching spheres. Thus, we have
  \[ \abs{\bigcup_{i=0}^p \down g_i}\simeq \bigvee_{i=1}^p S^{n-1}=\bigvee_{B}S^{n-1} \]
  Also, note that the attachment of spheres depends only on the order, but not on the weights of $g$, where the order corresponds to the order in which the copies of $S^{n-1}$ are attached. Thus, for $g\leq h$, we obtain that the map $\abs{\rho/g}\to \abs{\rho/h}$ is given by some permutation of the copies of the spheres.
  \[
    \begin{tikzcd}
      \abs{\rho/g}\ar[r] \ar[d, iso'] & \abs{\rho/h} \ar[d, iso]\\
      \bigvee_B S^{n-1}\ar[r,"\tau"'] & \bigvee_B S^{n-1}
    \end{tikzcd}
  \]
  By Quillen's theorem B~\cite{Qui73}, this means that the homotopy fibre of $\rho$ is $\bigvee_BS^{n-1}$.

  Let $c\in\Conf_n\br{B}$ be some configuration with $\psi\br{c}\leq g$. Consider the diagram
  \[
    \begin{tikzcd}
      \R^n\setminus c\br{B}\ar[d] \ar[r,"\psi_c"] & \rho/g \ar[d]\\
      \Conf_n\br{A} \ar[d] \ar[r, "\psi_A"] & \calg_n\br{A}\ar[d]\\
      \Conf_n\br{B} \ar[r, "\psi_B"] & \calg_n\br{B}
    \end{tikzcd}
  \]
  The maps $\psi_A$ and $\psi_B$ are the map $\psi$ defined earlier, and the map $\psi_c$ is defined by sending the point $x$ to the image under $\psi_A$ of the configuration given by extending $c$ via $c\br{a}=x$. This makes the diagram commute.

  The column on the left consists of topological spaces, the one on the right of posets and the horizontal maps are maps as discussed in Section~\ref{secEquivalence}. By Lemma~\ref{lemEquivalence} part $(a)$, we obtain a diagram 
  \[
    \begin{tikzcd}
      \R^n\setminus c\br{B}\ar[d] & \abs{\br{\R\setminus\cb{c}}_{\rho/g}} \ar[l, iso'] \ar[r] \ar[d] & \abs{\rho/g} \ar[d]\\
      \Conf_n\br{A} \ar[d] & \abs{\Conf_n\br{A}_{\calg_n\br{A}}} \ar[d] \ar[l, iso'] \ar[r]& \abs{\calg_n\br{A}}\ar[d]\\
      \Conf_n\br{B} & \ar[l, iso'] \ar[r]\abs{\Conf_n\br{B}_{\calg_n\br{B}}} & \abs{\calg_n\br{B}}
    \end{tikzcd}
  \]
  where the horizontal maps on the left are equivalences, as $\R^n\setminus c\br{B}$ and $\Conf_n\br{A}$ are manifolds and the preimages of the downsets are open, as they are defined by strict inequalities. In this diagram, the left hand column is the Fadell-Neuwirth fibration~\cite{FN62}, so the middle column is a fibration up to homotopy. By Quillen's theorem B, the right hand column is as well, as we have just seen. To finish the proof, it now suffices to show that the horizontal maps on the right are weak equivalences, as well. Since this is obviously the case if $A$ is a single point, this follows by induction on the size of $A$ if we can show that the top right horizontal map is a weak equivalence. Thus, the following lemma completes the proof.

  \begin{lemma}
    The map $\psi_c$ induces an equivalence $\abs{\br{\R^n\setminus c\br{B}}_{\rho/g}}\to \abs{\rho/g}$.
  \end{lemma}
  \begin{proof}
    Since we have just shown that the homotopy type of $\rho/g$ depends only on the ordering of $g$, but not the particular $g$, it suffices to prove the claim for one particular $g$. Thus, let us choose $g$ to be the graph $b_1\to\dots\to b_p$ with all edges labelled by $1$ and let $c$ be the configuration with the points $b_i$ arranged from left to right on a line along the first coordinate axis (say, we place $b_i$ at $\br{i,0,\dots,0}$).

    By our analysis of the poset $\rho/g$ at the beginning of the proof of the theorem, it suffices to prove that the spaces $\psi^{-1}_c\br{\downarrow h}$ are contractible CW-complexes, if $h$ is either one of the $g_i$ or one of the $h_i^-\sbr{k}, h_i^+\sbr{k}$.
    \begin{itemize}[itemindent=*,leftmargin=11pt]
    \item For one of the $g_i$, the space $\psi_c^{-1}\br{\down g_i}$ is just $\R^n$ without the line segments of the form $\br{j,0,\dots, 0, t}$ where $j\in\cb{1,\dots,p}$, $t\geq 0$, if $j>i$ and $t\leq 0$, if $j\leq i$. By shrinking those line segments towards $\pm \infty$, it can be seen that this space is homotopic to $\R^n$ and, thus, contractible.
    \item For one of the $h_i^{+}\sbr{k}$, the space $\psi_c^{-1}\br{\down h_i^{+}\sbr{k}}$ is $\R^n$ without the line segments of the form $\br{j,0,\dots, 0, t}$ where $j\in\cb{1,\dots,p}$, $t\geq 0$, if $j>i+1$ and $t\leq 0$, if $j<i+1$, as well as the points of the form $\br{i+1,0,\dots,0,t,\ast,\dots,\ast}$ with $t\leq 0$ where $t$ is the $k$-th entry. The space of points of the latter form can similarly be shrunk towards $\br{i+1,0,\dots,0,-\infty,\ast,\dots,\ast}$. Thus, this space is, equally, contractible.
    \item For one of the $h_i^{-}\sbr{k}$, we get the analogous space to $h_{i}^{-}$ where we, instead, have the points with $t\leq 0$.
    \end{itemize}
  \end{proof}
\end{proof}

\begin{cor}
  The map $\varphi: \calc_n\to \calg_n^{\mathrm{ext}}$ induces an equivalence of operads between $\calc_n$ and $\abs{\calg_n^{\mathrm{ext}}}$.
\end{cor}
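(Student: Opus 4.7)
The plan is to invoke Corollary~\ref{corEquivalence} applied to the lax morphism $\varphi\colon\calc_n\to\calg_n^{\mathrm{ext}}$ just constructed. Doing so reduces the claim, for each finite set $A$, to two verifications: (i) the map $\calc_n(A)\to\calg_n^{\mathrm{ext}}(A)$ is sufficiently cofibrant, and (ii) each down-set $\varphi^{-1}(\down g)\coloneqq\cb{c\in\calc_n(A)\,\vert\,\varphi(c)\leq g}$ is weakly contractible.

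Condition~(i) I would read off Lemma~\ref{lemDownCW}: each $\varphi^{-1}(\down g)$ is presented there as an explicit CW-complex cut out by a finite family of linear inequalities among the corner coordinates $v(a),w(a)$. Since the cells of these complexes are indexed by the same combinatorial data as $g$ varies, the inclusions $\varphi^{-1}(\down g')\hookrightarrow\varphi^{-1}(\down g)$ for $g'\leq g$ are CW-subcomplex inclusions and hence cofibrations. As $\calg_n^{\mathrm{ext}}(A)$ is a finite poset (in particular well-founded) and $\calc_n(A)$ carries the resulting colimit topology, condition~(a)(ii) of Lemma~\ref{lemEquivalence} applies.

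For condition~(ii) I would first reduce, via Remark~\ref{remGraphsConf}, to showing that $\psi^{-1}(\down g)\subseteq\Conf_n(A)$ is weakly contractible for every $g\in\calg_n^{\mathrm{ext}}(A)$, since $\gamma$ restricts to a homotopy equivalence $\varphi^{-1}(\down g)\to\psi^{-1}(\down g)$. Because $\psi$ factors through $\calg_n$, only the intersection $U\coloneqq(\down g)\cap\calg_n$ matters, so the task is to show $\psi^{-1}(U)$ is contractible. I would argue this by adapting the Fadell--Neuwirth/Quillen~B induction used in the proof of Theorem~\ref{propGisEn}: restricting along $\rho$ reduces inductively to a similar statement one arity down plus a fibrewise analysis of $\psi_c$, while the combinatorics of $U$ is controlled by the contractibility of $U$ itself, which is exactly what was established in the course of proving Theorem~\ref{thmGraphs}.

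The main obstacle is this last point. The proof of Theorem~\ref{propGisEn} explicitly verifies contractibility of $\psi_c$-preimages only for down-sets of a very restricted shape (the $\down g_i$ and $\down h_i^\pm[k]$ of that proof), so some bookkeeping is required to extend the attaching-sphere analysis there to the preimage of an arbitrary contractible down-set $U\subseteq\calg_n(A)$; once that extension is in hand, Corollary~\ref{corEquivalence} directly yields the equivalence of operads $\calc_n\simeq\abs{\calg_n^{\mathrm{ext}}}$.
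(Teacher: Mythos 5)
The paper's proof is a two-line two-out-of-three argument: the square
\[
  \begin{tikzcd}
    \Conf_n\br{A} \ar[r] \ar[d] & \calc_n\br{A}\ar[d,"\varphi"]\\
    \calg_n\br{A} \ar[r, hook] & \calg_n^{\mathrm{ext}}\br{A}
  \end{tikzcd}
\]
commutes, the top map is the standard homotopy equivalence (cubes to centres), the left map induces an equivalence by Theorem~\ref{propGisEn}, and the bottom inclusion induces an equivalence by Theorem~\ref{thmGraphs}. Since the other three sides are equivalences, so is $\varphi$. No new contractibility verifications are needed.

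Your route — applying Corollary~\ref{corEquivalence} directly to $\varphi$ — is genuinely different and has a real gap. Corollary~\ref{corEquivalence} requires every $\varphi^{-1}(\down g)$, $g\in\calg_n^{\mathrm{ext}}(A)$, to be weakly contractible, and nothing in the paper establishes this. Note in particular that Theorem~\ref{propGisEn} is proved via Quillen's Theorem~B and the Fadell--Neuwirth fibration, \emph{not} via Corollary~\ref{corEquivalence}; the quantities shown contractible there are the fibrewise preimages $\psi_c^{-1}(\down h)$ inside a single slice $\R^n\setminus c(B)$ over a fixed configuration $c$ on $B$, for a short explicit list of $h$ in the comma category $\rho/g$. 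These are not the global down-set preimages $\psi^{-1}(\down g)\subseteq\Conf_n(A)$, and the attaching-sphere combinatorics computed there in fact produce wedges of $(n-1)$-spheres as homotopy fibres, which is precisely why Theorem~B rather than Theorem~A (or Corollary~\ref{corEquivalence}) is the tool used. The only place the paper shows global down-set preimages contractible is Theorem~\ref{thmCubesProperGraphs}, and only for \emph{proper} graphs, via a linear-interpolation retraction that does not obviously carry over to arbitrary $g$. Finally, the inference ``$\down g\cap\calg_n$ is contractible as a poset, hence its $\psi$-preimage is contractible'' is not valid: contractibility of a poset does not transfer to contractibility of preimages without exactly the kind of hypothesis you are trying to establish. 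So what you flag as ``some bookkeeping'' is in fact the entire missing content of the argument; the paper sidesteps it entirely.
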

\begin{proof}
  This is clear from the diagram
  \[
    \begin{tikzcd}
      \Conf_n\br{A} \ar[r, hook, iso] \ar[d, iso'] & \calc_n\br{A}\ar[d]\\
      \calg_n\br{A} \ar[r, hook, iso'] & \calg_n^{\mathrm{ext}}
    \end{tikzcd}
  \]
  where the map on the left induces an equivalence, as we have just proved, while the map on the bottom is an equivalence by Theorem~\ref{thmGraphs}.
\end{proof}

In the literature, configuration spaces and the little cubes operad have been related to a smaller collection $\PP_n$ of proper graphs; see Appendix~\ref{appB} for precise references. We begin by recalling the definitions and deduce from Theorem~\ref{propGisEn} that the inclusion $\PP_n\hookrightarrow \calg_n$ is an equivalence, as well --- a question left open in the literature.

\begin{definition}\label{defProper}
  For a graph $g\in\calg_n^{\mathrm{ext}}$, we will call $\psi^{-1}\br{g}$ the {\em cell corresponding to $g$}. Further, we will call the subset $\psi^{-1}\br{g}\setminus \bigcup_{h<g}\psi^{-1}\br{h}$ its {\em interior}.

  We will call a graph $g\in \calg_n^ {\mathrm{ext}}$ {\em proper\/} if its corresponding cell has a non-empty interior, i.e.\ if the inclusion
  \[\bigcup_{h<g} \psi^ {-1}\br{h}\subseteq \psi^ {-1}\br{g}\]
  is proper. We will write $\PP_n\br{A}$ for the poset of proper graphs on $A$. The following lemma gives a criterion for a graph to be proper. The proper graphs do not for a suboperad of the complete graphs operad.
\end{definition}

\begin{lemma}
 The poset $\PP_n\br{A}$ consists precisely of those (acyclic) graphs $g$ for which, for any path $v_0\to \dots \to v_k$, the weight of the edge $v_0\to v_k$ is the minimum of the weights of the $v_{i}\to v_{i+1}$.
\end{lemma}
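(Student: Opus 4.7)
The plan is to prove both directions of the characterisation. By Definition~\ref{defProper}, $g \in \PP_n\br{A}$ if and only if some $x \in \Conf_n\br{A}$ satisfies $\psi\br{x} = g$ (not merely $\psi\br{x} \leq g$), so the task is to characterise the image of $\psi$ on $\Conf_n\br{A}$.

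For the forward direction, suppose $\psi\br{x} = g$ and take a directed path $v_0 \overset{w_1}{\to} v_1 \overset{w_2}{\to} \cdots \overset{w_k}{\to} v_k$ in $g$; set $m = \min_j w_j$. The definition of $\psi$ gives $x\br{v_{j-1}}_l = x\br{v_j}_l$ for $l < w_j$ and $x\br{v_{j-1}}_{w_j} < x\br{v_j}_{w_j}$. Telescoping, $x\br{v_0}$ and $x\br{v_k}$ agree on coordinates below $m$, while on coordinate $m$ the sequence $x\br{v_j}_m$ is non-decreasing along the path with strict increase at every step where $w_j = m$ (and at least one such step exists). Thus $m$ is the first coordinate of disagreement, so the edge in $\psi\br{x} = g$ is $v_0 \overset{m}{\to} v_k$, as required.

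For the backward direction, given an acyclic $g$ satisfying the min-path condition, I construct $x \in \Conf_n\br{A}$ with $\psi\br{x} = g$. For each $i \in \cb{0, \ldots, n}$ define a relation $\sim_i$ on $A$ by $a \sim_i b$ iff $a = b$ or the $g$-weight of the edge between $a$ and $b$ strictly exceeds $i$. Then $\sim_0$ relates all of $A$, $\sim_n$ is the discrete relation, and (once we know $\sim_i$ is an equivalence relation) $\sim_i$ refines $\sim_{i-1}$.

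The main obstacle is the technical claim that each $\sim_i$ is an equivalence relation and that, within each $\sim_{i-1}$-class $C$, the $\sim_i$-subclasses are linearly ordered by $g$ — that is, all weight-$i$ edges between two given subclasses point the same way. Both facts are proved by a finite case analysis over the $g$-induced linear order on three or four vertices, invoking the min-path property to rule out the bad configurations: for transitivity, given $a \sim_i b \sim_i c$ pairwise distinct, the linear order on $\cb{a,b,c}$ always exhibits at least one directed path through the three vertices whose min-path closure forces the remaining edge's weight to exceed $i$; for the linear-ordering claim, a putative mismatch $a \to b$ and $b' \to a'$ (with $a, a'$ in one $\sim_i$-subclass and $b, b'$ in another) can be shown, in each of the possible four-vertex orderings, to yield a directed path of total min-weight $i$ across a pair that is supposed to sit in a common $\sim_i$-subclass, contradicting the min-path property. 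With these in hand, the construction is routine: for each coordinate $i$ and each $\sim_{i-1}$-class $C$, choose distinct values $y_S \in \br{0,1}$ for the $\sim_i$-subclasses $S$ of $C$, strictly increasing along the linear order on subclasses, and set $x\br{a}_i \coloneqq y_{[a]_i}$ where $[a]_i$ denotes the $\sim_i$-subclass of $a$ within its $\sim_{i-1}$-class. Verification that $\psi\br{x} = g$ is then immediate from the definitions: for distinct $a, b$ with $g$-edge weight $w$ one has $a \sim_{w-1} b$ and $a \not\sim_w b$, so $x\br{a}$ and $x\br{b}$ agree on coordinates $< w$ and disagree on coordinate $w$, with the direction inherited from the chosen order on $\sim_w$-subclasses.
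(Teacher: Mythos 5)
Your forward direction is essentially the paper's argument, telescoped identically. The backward direction, however, follows a genuinely different path. The paper simply identifies $A$ with $\cb{0,\dots,m}$ via the linear order underlying $g$, sets $x\br{0}=0$, and inductively defines $x\br{i+1}=x\br{i}+e_{w\br{i,i+1}}$; the min-path property then forces $\psi\br{x}=g$ in one line, because for $i<j$ the first differing coordinate of $x\br{i}$ and $x\br{j}$ is visibly $\min_p w\br{p,p+1}$. You instead build the configuration coordinate-by-coordinate from the nested family of relations $\sim_i$ (weight $>i$), which requires establishing that each $\sim_i$ is transitive and that the $\sim_i$-subclasses within a $\sim_{i-1}$-class are totally ordered by the edge directions. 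I checked your sketched case analyses — six orderings of three vertices for transitivity, six orderings of four vertices (plus the degenerate $a=a'$ case) for the linear-order claim — and all do close via the min-path property; combined with acyclicity (which gives transitivity of the order on subclasses), the construction works and $\psi\br{x}=g$ does follow. So your proof is correct but considerably heavier than the paper's. What your version buys is a structural payoff: the nested partitions $\sim_n \subseteq \cdots \subseteq \sim_1$ are exactly (the equivalence-class versions of) the Getzler--Jones nested preorders $\pi_1\leq\cdots\leq\pi_n$ that the paper mentions in Appendix~\ref{appB} as an alternative description of proper cells; you are in effect proving that a complete graph satisfying the min-path condition is the same data as such a nest. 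The paper's one-line unit-vector construction is preferable if one just wants the characterisation, but your argument makes the link to the nested-preorder picture explicit. If you keep your version, do spell out the case analyses (they are short but the reader should see that all branches actually close) and note explicitly that transitivity of the order on $\sim_i$-subclasses follows from acyclicity of $g$.
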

\begin{proof}
 If we have some configuration $x$ of points in the interior of the cell corresponding to $g$, then, in particular, we must have $\psi\br{x}=g$. Now, if $v_0\to\dots\to v_k$ is a path in $g$ and $l$ is the lowest weight on any of the edges, say $v_i\overset{l}{\to}v_{i+1}$, then, by definition of $\psi$, we must have $x\br{v_p}_j=x\br{v_{p+1}}_j$ for all $j<l$ and $x\br{v_p}_l\leq x\br{v_{p+1}}_l$ for all $p\in \cb{0,\dots,k-1}$ with at least one inequality holding strictly. In particular, we have $x\br{v_0}_j=x\br{v_{k}}_j$ for $j<l$ and $x\br{v_0}_l<x\br{v_k}_l$. Thus, the edge $v_0\to v_k$ has weight $l$ in $\psi\br{x}=g$ and the condition is necessary.

  To see that it is sufficient, let $g$ be a graph satisfying it. Without loss of generality, we assume $A=\cb{0,\dots,m}$ and inductively define a configuration on it. We set $x\br{0}=0$ and, if $x\br{i}$ has already been defined and the edge $i\to i+1$ has weight $l$ in $g$, then we set $x\br{i+1}\coloneqq x\br{i}+\br{0,\dots,1,\dots,0}$ with the $1$ in the $l$-th position. It is clear that $\psi\br{x}$ agrees with $g$ on edges of the form $i\to i+1$ and, by the property, it also does on all others.
\end{proof}

\begin{theorem}
  The map $\psi: \Conf_n\to \PP_n$ induces an equivalence $\Conf_n\simeq \abs{\PP_n}$.\label{thmCubesProperGraphs}
\end{theorem}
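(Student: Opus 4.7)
The plan is to apply Lemma~\ref{lemEquivalence} pointwise to each map $\psi\colon \Conf_n\br{A}\to \PP_n\br{A}$. First note that $\psi$ indeed lands in the proper graphs: if $\psi\br{x}=g$ then $x$ belongs to the cell of $g$ but not to $\psi^{-1}\br{h}$ for any $h<g$ (since the latter would force $\psi\br{x}=h\neq g$), so $x$ lies in the interior of the cell and $g$ is proper by Definition~\ref{defProper}. Unpacking the partial order, $x\in \psi^{-1}\br{\down g}$ if and only if for every edge $a\overset{w}{\to}b$ in $g$ the vector $x\br{b}-x\br{a}$ is lex-positive in its first $w$ coordinates. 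This description shows that $\psi^{-1}\br{\down g}$ is open in the smooth manifold $\Conf_n\br{A}$ and is a retract of a CW-complex, so hypotheses (a)(i) and (b) of Lemma~\ref{lemEquivalence} are satisfied; what remains is contractibility of each downset in order to invoke part (c).

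For this, fix a proper $g\in \PP_n\br{A}$, order $A$ as $a_1<\ldots<a_m$ according to $g$, and use the canonical configuration from the preceding lemma:
\[ x^g\br{a_1}=0,\qquad x^g\br{a_{i+1}}=x^g\br{a_i}+e_{g\br{a_i,a_{i+1}}}, \]
where $e_j$ denotes the $j$-th standard basis vector. For $i<j$ we have $x^g\br{a_j}-x^g\br{a_i}=\sum_{k=i}^{j-1}e_{g\br{a_k,a_{k+1}}}$; by properness of $g$ this vector vanishes in coordinates strictly below $w\coloneqq g\br{a_i,a_j}=\min_k g\br{a_k,a_{k+1}}$ and is strictly positive in coordinate $w$, so in particular $\psi\br{x^g}=g$ and $x^g\in \psi^{-1}\br{\down g}$. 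I now claim that the straight-line homotopy $H_t\br{x}=\br{1-t}x+t\,x^g$ exhibits $\psi^{-1}\br{\down g}$ as star-shaped at $x^g$. For each edge $a\overset{w}{\to}b$ of $g$,
\[ H_t\br{x}\br{b}-H_t\br{x}\br{a}=\br{1-t}\br{x\br{b}-x\br{a}}+t\br{x^g\br{b}-x^g\br{a}}, \]
where both summands are $w$-lex-positive and the second vanishes in coordinates $1,\ldots,w-1$. A coordinate-wise check---splitting on whether the first nonzero entry of $x\br{b}-x\br{a}$ lies strictly before position $w$ or exactly at $w$---shows this combination is again $w$-lex-positive for every $t\in \sbr{0,1}$, so $H_t\br{x}\in \psi^{-1}\br{\down g}$.

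With contractibility of each downset in hand, Lemma~\ref{lemEquivalence}(a)(i),(b),(c) produces the zigzag
\[ \Conf_n\br{A}\xleftarrow{\mu}\abs{\Conf_n\br{A}_{\PP_n\br{A}}}\xrightarrow{\pi}\abs{\PP_n\br{A}} \]
of weak equivalences, and naturality in $A$ (with respect to bijections and the evident restriction maps, which preserve properness) promotes this to an equivalence of symmetric sequences $\Conf_n\simeq \abs{\PP_n}$. The only genuinely subtle step is verifying the star-convexity of $\psi^{-1}\br{\down g}$ at $x^g$; this rests on the identification $w=\min_k g\br{a_k,a_{k+1}}$ along the staircase path $a_i\to\ldots\to a_j$, which is precisely the properness hypothesis and would fail for a general $g\in \calg_n\br{A}$.
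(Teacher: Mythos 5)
Your proof is correct and follows the same overall strategy as the paper: invoke Corollary~\ref{corEquivalence}, check that each downset $\psi^{-1}\br{\down g}$ is open (hence sufficiently cofibrant, by Lemma~\ref{lemEquivalence}\ref{lemeqa}\ref{lemeqai}) and a CW-complex, and then prove contractibility. Where you depart is in the contractibility step. The paper filters the downset by the subspaces $Z_k = \cb{y \in \psi^{-1}\br{\down g} \mid y\br{a}_i = x\br{a}_i \;\forall a, \;\forall i > k}$ and retracts $Z_k$ onto $Z_{k-1}$ by an affine homotopy in the $k$-th coordinate alone, running $n$ such stages to reach a point. You observe instead that the whole downset is star-shaped at $x^g$ under the single straight-line homotopy, which is a genuine (if modest) simplification; the verification reduces to the same coordinate-wise inequalities but is done in one stroke. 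Note also that you do not actually need the canonical configuration $x^g$: the only property used is that $x^g\br{b} - x^g\br{a}$ vanishes in coordinates $< w$ and has positive $w$-th coordinate for each edge $a \xrightarrow{w} b$ of $g$, which holds for any $x$ with $\psi\br{x} = g$ — this is exactly what the paper does, taking an arbitrary such $x$. One small imprecision: the condition $\psi\br{y} \leq g$ at an edge $a \xrightarrow{w} b$ is \emph{not} that $y\br{b}-y\br{a}$ is lex-positive in its first $w$ coordinates; a negative leading entry strictly before position $w$ is also permitted. Fortunately your case split (first nonzero strictly before $w$ versus exactly at $w$) is exactly the right dichotomy and handles both signs in the first case, so the argument is unaffected.
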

\begin{proof}
  We will apply Corollary~\ref{corEquivalence} to the map $\psi$. Again, $\Conf_n$ is a manifold and the preimages of the downsets are open. Thus, it remains to be shown that the $\psi^ {-1}\br{\down g}$ are also contractible for all proper $g$. The CW-part is analogous to the argument in Lemma~\ref{lemDownCW}, only that all inequalities are strict, i.e.~we omit all faces. For contractibility, let $g$ be a proper graph and $x\in \Conf_n\br{A}$ be a point with $\psi\br{x}=g$.

  Since $\psi\br{x}=g$, we have that, for every edge $a\overset{i}{\to}b$ in $g$, $x\br{a}_j=x\br{b}_j$ for $j<i$ and $x\br{a}_i<x\br{b}_i$. We set
  \[ Z_k\coloneqq \cb{y\in \psi^ {-1}\br{\down g}\;\vert\; y\br{a}_i=x\br{a}_i \text{ for all }a\in A\text{ and }i>k} \]
  In particular, we have $Z_n=\psi^ {-1}\br{\downarrow g}$ and $Z_0=\cb{x}$. To show that $\psi^ {-1}\br{\down g}$ is contractible, it suffices to see that we can retract $Z_k$ to $Z_{k-1}$.

  We define a retraction as follows:
  for $t\in \sbr{0,1}$, we map a point $y\in Z_k$ to the point $y^{\br{t}}$ given by
  \[
    y^ {\br{t}}\br{a}_i\coloneqq
    \begin{cases}
      y\br{a}_i, & i\neq k\\
      \br{1-t}y\br{a}_k+tx\br{a}_k, &i=k
    \end{cases}
  \]

  It remains to be seen that this is, indeed, a well-defined retraction. In other words, we need to see that $y^{\br{t}}$ is an element of $\psi^ {-1}\br{\down g}$ for all  $t$. Suppose we have some edge $a\overset{i}{\to}b$ in $g$. If $i>k$, then $y^ {\br{t}}\br{a}_i=x\br{a}_i$ and, since $g=\psi\br{x}$, also $y^ {\br{t}}\br{a}_i<y^ {\br{t}}\br{b}_i$. If $i<k$, then $y^ {\br{t}}\br{a}_i=y\br{a}_i$ and $y\in \psi^ {-1}\br{\down g}$. Lastly, we need to check the case $i=k$. If $y\br{a}_j\neq y\br{b}_j$ for some $j<i$, then also $y^ {\br{t}}\br{a}_j\neq y^ {\br{t}}\br{b}_j$. Otherwise, we must have that $y\br{a}_k<y\br{b}_k$, since $y\in\psi^ {-1}\br{\down g}$, but $x$ also satisfies $x\br{a}_k\neq x\br{b}_k$ and therefore we also have $y^ {\br{t}}\br{a}_k\neq y^ {\br{t}}\br{b}_k$.

  Thus, we have $y^{\br{t}}\in \psi^{-1}\br{\downarrow g}$.
\end{proof}

\begin{remark}
  In~\cite{Ber97}, it has been claimed that the inclusion $\PP\br{A}/\Sigma_A\leftrightarrows \calg\br{A}/\Sigma_A$ has a right adjoint, which would make this into a direct proof of the equivalence between $\calc_n$ and $\calg_n$. Unfortunately, such an adjunction cannot exist. If we do not quotient by $\Sigma_A$, the following gives a counterexample:
  
    \[
      \begin{tikzcd}
        & b \ar[rd, "2"]& \\
        a\ar[ur, "1"] \ar[rr, "2"'] & & c
      \end{tikzcd}
    \]
    then the poset $\PP_2/g$ is given by
    \[
      \begin{tikzcd}
        a\overset{1}{\to}b\overset{2}{\to}c & a\overset{1}{\to}c\overset{2}{\to}b & a\overset{2}{\to}c\overset{1}{\to}b\\
        a\overset{1}{\to}b\overset{1}{\to}c \ar[u]\ar[ur] & a\overset{1}{\to}c\overset{1}{\to}b\ar[u]\ar[ul] \ar[ur]& c\overset{1}{\to}a\overset{1}{\to}b\ar[u]
      \end{tikzcd}
    \]
    and, hence, not contractible (it is homotopy equivalent to $S^1$). So $\PP\br{A}\hookrightarrow \calg\br{A}$ cannot have a right adjoint. The same conclusion then follows for $\PP\br{A}/\Sigma_A\leftrightarrows \calg\br{A}/\Sigma_A$. (Indeed, for a general category $\cale$ with a free left action by a group $H$, the quotient category $\mathrm{Ar}\br{\cale}/H\rightrightarrows\mathrm{Ob}\br{\cale}/H$ is equivalent to the Grothendieck construction $\int_H\cale$. Its slice $\br{\int_H\cale}/x$ over an object $x$ has the slice $\cale/x$ as a deformation retract, so it has the same homotopy type.)

    A left adjoint cannot exist, either. For this, consider the graph
    \[
      \begin{tikzcd}
        & b \ar[rd, "1"]& \\
        a\ar[ur, "2"] \ar[rr, "2"'] & & c
      \end{tikzcd}
    \]
    then the poset $g/\PP_2$ consists of only to incomparable elements, $a\overset{2}{\to}b\overset{2}{\to}c$ and $a\overset{2}{\to}c\overset{2}{\to}b$. Therefore, it is not connected and, in particular, not contractible. 
\end{remark}

\begin{cor}
  The inclusion $\PP_n\to \calg_n$ induces an equivalence $\abs{\PP_n}\hookrightarrow \abs{\calg_n}$ of preoperads in spaces.
\end{cor}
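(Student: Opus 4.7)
The plan is a direct 2-out-of-3 argument assembling the two preceding theorems into a single commutative triangle.

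First, I would observe that the lax morphism $\psi\colon \Conf_n \to \calg_n$ from Remark~\ref{remGraphsConf} in fact takes values in the subposet of proper graphs. Indeed, for any configuration $x \in \Conf_n(A)$ with $\psi(x) = g$, the interior of the cell corresponding to $g$ (as in Definition~\ref{defProper}) is non-empty, since it contains $x$ itself. Thus $g \in \PP_n(A)$, and the lax morphism $\psi$ factors through the inclusion $\PP_n \hookrightarrow \calg_n$.

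Consequently, for each finite set $A$ we obtain a commutative triangle of spaces
\[
\begin{tikzcd}
& \Conf_n(A) \ar[dl, "\psi"'] \ar[dr, "\psi"] & \\
\abs{\PP_n(A)} \ar[rr, hook] & & \abs{\calg_n(A)}
\end{tikzcd}
\]
in which the right-hand slanted map is a weak homotopy equivalence by Theorem~\ref{propGisEn}, and the left-hand slanted map is a weak homotopy equivalence by Theorem~\ref{thmCubesProperGraphs}. The 2-out-of-3 property of weak equivalences then forces the bottom horizontal map $\abs{\PP_n(A)} \hookrightarrow \abs{\calg_n(A)}$ to be a weak homotopy equivalence as well. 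Since this holds for every $A$ and all maps involved are natural in injections $A \hookrightarrow A'$, the inclusion induces an equivalence of preoperads in spaces.

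There is no real obstacle to overcome here: all the substantive homotopical content is packaged inside the two theorems we are quoting, and the only new input is the elementary observation that $\psi$ lands in $\PP_n$. One could equally verify this via the path-weight criterion established in the lemma preceding Theorem~\ref{thmCubesProperGraphs}, but the cell-interior argument is more immediate.
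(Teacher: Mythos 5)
Your strategy is precisely the paper's own: the paper's proof consists literally of the sentence ``This follows from~\ref{thmCubesProperGraphs} and~\ref{propGisEn}'', i.e.\ a 2-out-of-3 argument using the same two theorems. The one imprecision is that the commutative triangle you draw is not a diagram of continuous maps. The function $\psi$ goes from the space $\Conf_n(A)$ to the \emph{poset} $\PP_n(A)$; it does not induce a continuous map $\Conf_n(A)\to\abs{\PP_n(A)}$, since such a map into a classifying space would have to be locally constant, whereas $\psi$ jumps under arbitrarily small perturbations of a configuration. What Theorems~\ref{propGisEn} and~\ref{thmCubesProperGraphs} actually produce are zigzags $\Conf_n(A)\leftarrow\abs{\Conf_n(A)_{\PP_n(A)}}\rightarrow\abs{\PP_n(A)}$ and $\Conf_n(A)\leftarrow\abs{\Conf_n(A)_{\calg_n(A)}}\rightarrow\abs{\calg_n(A)}$, via the intermediate objects of Proposition~\ref{propEquivalence}. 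The inclusion $\PP_n(A)\hookrightarrow\calg_n(A)$ induces a map between the two intermediates, yielding a commuting ladder over $\Conf_n(A)$; 2-out-of-3 applied to the left square shows the middle vertical map is a weak equivalence, and applied to the right square gives the desired conclusion. Your observation that $\psi$ factors through $\PP_n$ is exactly what makes the two zigzags comparable, so it is a genuine ingredient; once you substitute the ladder for your triangle, the argument coincides with the paper's.
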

\begin{proof}
  This follows from~\ref{thmCubesProperGraphs} and~\ref{propGisEn}.
\end{proof}

\section{The Barratt-Eccles Operad}\label{secBE}

\begin{definition}
  For a finite set $A$, we let $\Lin{A}$ denote the category of {\em linear orders on $A$}. More precisely, we define it to be the category whose objects consist of pairs $\br{L,l}$ with $L$ a linearly ordered set and $l: A\to L$ a bijection. Further, we define a morphism $\br{L,l}\to \br{M,m}$ to consist of a function $f: L\to M$ (not necessarily respecting the orders on $L$ and $M$), such that the triangle
  \[
    \begin{tikzcd}
      & A \ar[ld, "l"'] \ar[rd, "m"] & \\
      L \ar[rr, "f"'] & & M
    \end{tikzcd}
  \]
  commutes. Note that any such morphism is necessarily a bijection. Moreover, between any two objects, there is exactly one morphism.

  Given a bijection $\sigma: B \isoto A$, we obtain a functor $\Lin{B}\to \Lin{A}$ given by sending an object $\br{L,l}$ to the object $\br{L, l\circ \sigma^{-1}}$. Further, we can define a composition for these categories via
  \begin{align*}
    \circ_a: &\Lin{A}\times \Lin{B}\to \Lin{A\sbr{B/a}}\\
    &\br{\br{L,l}, \br{M,m}} \mapsto \br{L\sbr{M/l\br{a}}, l\sbr{m/a}}
  \end{align*}
  where $L\sbr{M/l\br{a}}$ is the linear order whose underlying set is $L\sbr{M/l\br{a}}$ and whose order is defined by
  \[
    x<y\colonLeftrightarrow
    \begin{cases}
      x<_L y, & x,y\in L\\
      x<_M y, & x,y\in M\\
      x<_L l\br{a}, & x\in L, y\in M\\
      l\br{a} <_L y, & x\in M, y\in L
    \end{cases}
  \]
  and $l\sbr{m/a}: A\sbr{B/a}\to L\sbr{M/l\br{a}}$ is defined via $l\sbr{m/a}\br{x}=l\br{x}$ for $x\notin B$ and $l\sbr{m/a}\br{y}=m\br{y}$ for $y\in B$. This gives $\mathsf{Lin}$ the structure of an operad.

  Taking the nerves of these categories, for all finite sets $A$, gives the {\em Barratt-Eccles\/} operad $\Gamma$ in $\sSet$. The spaces $\abs{\Gamma\br{A}}$ have a free $\Sigma_A$ action and are contractible --- in other words, $\abs{\Gamma}$ defines an $E_\infty$-operad.
\end{definition}

\begin{remark}
  The classical definition of the Barratt-Eccles operad is slightly different, using the categories whose objects are the permutations on $A$, instead of $\Lin{A}$. If we choose some linear order on $A$, it is clear that these two categories are equivalent.
\end{remark}

\begin{definition}\label{defWeightBE}
  For a $k$-simplex $x$ of $\Gamma\br{A}$, i.e.\ a diagram of the form
  \[
    \begin{tikzcd}
      & & A \ar[lld, "l_0"'] \ar[ld, "l_1"] \ar[rd, "l_k"] & \\
      L_0\ar[r, "f_1"'] & L_1\ar[r, "f_2"'] & \dots \ar[r, "f_k"'] & L_k
    \end{tikzcd}
  \]
  and two elements $a,b\in A$, we define the {\em weight\/} $w_x\br{a,b}$ as $1$ plus the number of $f_i$ that switch $a$ and $b$, or, more precisely, it is $1$ plus the number of indices $i$ such that the order of $l_{i-1}\br{a}$ and $l_{i-1}\br{b}$ is different from that of $f_i\br{l_{i-1}\br{a}}=l_i\br{a}$ and $l_i\br{b}$. For a face map $d_i$ or a degeneracy map $s_j$, we have $w_{d_{i}x}\br{a,b}\leq w_x\br{a,b}$ and $w_{s_{j}x}\br{a,b}=w_x\br{a,b}$, so we can define a simplicial subset
  \[ \Gamma_n\br{A}\coloneqq \cb{x\in \Gamma\br{A}\;\vert\; w_x\br{a,b}\leq n\;\forall a,b\in A} \]

  These spaces give a suboperad $\Gamma_n$ of $\Gamma$. We will return to the exact properties of these suboperads in the next subsection.
\end{definition}
\begin{example}
  For the simplex $x$ given by
  \[
    \begin{tikzcd}
      \dots < a <\cdots <b<\dots\ar[d]\\
      \dots < a <\cdots <b<\dots\ar[d]\\
      \dots < b <\cdots <a<\dots\ar[d]\\
      \dots < a <\cdots <b<\dots
    \end{tikzcd}
  \]
  we have $w_x\br{a,b}=3$.
\end{example}

\subsection{Relation to Complete Graphs}

In this section, we will prove the equivalence between the filtrations of the complete graphs and the Barratt-Eccles operad by constructing a map between the two.

\begin{definition}
  For an $m$-simplex $x\in \Gamma\br{A}$ and $a\neq b\in A$, recall the definition of the weights $w_x\br{a,b}$ from Definition~\ref{defWeightBE}. Assume $x$ is of the form
    \[
    \begin{tikzcd}
      & & A \ar[lld, "l_0"'] \ar[ld, near start, "l_1"] \ar[rd, "l_k"] & \\
      L_0\ar[r, "f_1"'] & L_1\ar[r, "f_2"'] & \dots \ar[r, "f_k"'] & L_k
    \end{tikzcd}
  \]
  then we will associate to it the graph $\gamma\br{x}$ defined by an edge $a\overset{i}{\to}b$ for any two $a\neq b\in A$, if $w_x\br{a,b}=i$ and $l_0\br{a}<l_0\br{b}$. In other words, the underlying linear order of $\gamma\br{x}$ is $L_0$ and the edges are weighted by the weight in $x$ between their endpoints.

  This defines a function
  \[ \gamma: \Gamma\br{A}_k\to \calg\br{A} \]
  Further, by definition of the suboperads $\Gamma_n$, it is clear that this restricts to a map
  \[ \gamma: \Gamma_n\br{A}\to \calg_n\br{A} \]
  for any $n$.
\end{definition}

\begin{lemma}
  The map $\gamma$ gives a morphism of operads $\abs{\Gamma_n}\to \calg_n$ as in Definition~\ref{defLax} --- in fact, we have equalities instead of inequalities.
\end{lemma}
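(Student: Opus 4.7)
The plan is to verify the two defining compatibilities of a lax morphism as in Definition~\ref{defLax} --- compatibility with the symmetric group action, and compatibility with the composition $\circ_a$ --- and to show that in the composition compatibility one in fact has \emph{equality}. Since $\gamma$ is defined on simplices and sends a degenerate simplex to the same graph as its non-degenerate reduction (degeneracies preserve weights) and a face to a graph $\leq$ the original (face maps can only decrease weights), it induces a well-defined map on the geometric realization by sending a point to the graph associated to the unique non-degenerate simplex containing it in its interior; this is the map we will check is an operad morphism.

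First, I would handle the symmetric action. For a bijection $\sigma : B \isoto A$, the induced functor $\Lin{B} \to \Lin{A}$ simply post-composes the bijection $l : B \to L$ with $\sigma^{-1}$, leaving the underlying linearly ordered set $L$ and the combinatorial switching pattern unchanged. On the graph side, $\sigma_*$ just relabels vertices. Hence for any simplex $x \in \Gamma_n(B)_k$, the weight $w_{\sigma_* x}(\sigma(b),\sigma(b')) = w_x(b,b')$ and the direction data agrees with the direction data of $\gamma(x)$ under relabeling, so $\gamma(\sigma_* x) = \sigma_* \gamma(x)$.

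The main step is the equality $\gamma(x \circ_a y) = \gamma(x) \circ_a \gamma(y)$ for $x \in \Gamma_n(A)_k$, $y \in \Gamma_n(B)_k$, and $a \in A$. Recall that composition in $\Lin$ inserts the linear orders of $y$ into each linear order of $x$ at the slot of $a$; in particular every element of $B$ occupies the same ``block'' that $a$ occupies, and the relative order of two elements in $A \setminus \{a\}$ (respectively of two elements in $B$) is unchanged at every level. The verification then splits into three disjoint cases for a pair of distinct vertices in $A[B/a]$. For $a', a'' \in A \setminus \{a\}$, the switching sequence in $x \circ_a y$ restricted to this pair agrees with that of $x$, so $w_{x \circ_a y}(a', a'') = w_x(a', a'')$ and the initial order of $l_0$ is unchanged; this matches the edge inherited from $\gamma(x)$. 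For $b, b' \in B$, the switching sequence comes entirely from $y$ (since the block containing $B$ is permuted internally only by $y$), so the weight and direction equal those of $\gamma(y)$. Finally, for $a' \in A \setminus \{a\}$ and $b \in B$, the pair $(a', b)$ switches in the composed simplex exactly when $(a', a)$ switches in $x$, and the initial orders $l_0 \in L_0[M_0/l_0(a)]$ record $a'$ on the same side of $b$ as $a'$ was of $a$ in $L_0$. So the weight and direction equal those of the edge $a'{-}a$ of $\gamma(x)$. Comparing with the explicit case-description of composition in $\calg_n$ given in its definition, we see that all four cases agree on the nose.

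The only obstacle worth flagging is that each of the three case verifications above requires the right bookkeeping of how the switches between level $i-1$ and level $i$ in the composed diagram correspond to switches in $x$ or in $y$ alone --- this is an unpacking of the inserted-linear-order construction but involves no new idea. Since $w_{x \circ_a y}$ is bounded by the maximum of weights occurring in $x$ and $y$, the composition indeed lands in $\Gamma_n$, confirming that $\Gamma_n$ is a suboperad. The unit compatibility is trivial, since the unique element in $\Gamma_n(\{a\})$ maps to the unique (edgeless) graph in $\calg_n(\{a\})$. This completes the verification that $\gamma$ is a strict morphism of operads in the sense of Definition~\ref{defLax}.
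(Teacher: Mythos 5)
Your proof is correct and follows essentially the same route as the paper: the heart of the matter is the three-case analysis of switches in $x\circ_a y$ versus switches in $x$ and $y$ (two vertices both in $A\setminus\{a\}$, both in $B$, or one of each), which is precisely the observation the paper records. The extra material you include --- well-definedness on the realization via non-degenerate simplices, the symmetric and unit compatibilities --- is correct but the paper elides it as routine.
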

\begin{proof}
  It suffices to prove that $\gamma$ respects the operad structure as a map from $\Gamma_n$ (as an operad in simplicial sets), as the geometric realisation is functorial.
  For $A,B$ and $a\in A$, we need to consider the functors
  \[ \circ_a: \Lin{A}\times \Lin{B}\to \Lin{A\sbr{B/a}} \]

  If $f$ is a morphism in $\Lin{A}$, and $g$ is a morphism in $\Lin{B}$, then we will write $f\circ_a g$ for the corresponding morphism given by applying the functor, i.e.
  \[
    \adjustbox{scale=0.85}{%
    \begin{tikzcd}
      & A \ar[ld, "l"'] \ar[rd, "l^\prime"] & \\
      L \ar[rr, "f"'] & & L^\prime
    \end{tikzcd}\qquad
    \begin{tikzcd}
      & B \ar[ld, "m"'] \ar[rd, "m^\prime"] & \\
      M \ar[rr, "g"'] & & M^\prime
    \end{tikzcd}\qquad
    \begin{tikzcd}
      & A\sbr{B/a}\ar[ld, "l\sbr{m/a}"'] \ar[rd, "l^\prime\sbr{m^\prime/a}"] & \\
      L\sbr{M/a} \ar[rr, "f\circ_a g"'] & & L^\prime\sbr{M^\prime/a}
    \end{tikzcd}}
  \]

  For $x,y\in A\setminus\cb{a}$ and $z,w\in B$, note the following:
  \begin{enumerate}
  \item $f$ switches $x$ and $y$, if and only if $f\circ_{a}g$ does.
  \item $g$ switches $z$ and $w$, if and only if $f\circ_{a}g$ does.
  \item $f$ switches $a$ and $x$, if and only if $f\circ_a g$ switches $x$ and $z$ (or $w$).
  \end{enumerate}
  Applying this observation to the strings of morphisms of two $k$-simplices, $x\in \Gamma\br{A}_k$ and $y\in \Gamma\br{B}_k$, we see that $\gamma\br{x\circ_a y}=\gamma\br{x}\circ_a \gamma\br{y}$.
\end{proof}

\begin{definition}
  For a graph $g\in \calg_n\br{A}$, we will write $\Gamma_g\subseteq \Gamma_n\br{A}$ for the simplicial set
  \[ \Gamma_g\coloneqq \cb{x\in \Gamma\br{A}\;\vert\; \gamma\br{x}\leq g} \]
\end{definition}

\begin{lemma}
  For any $g\in \calg_n\br{A}$, the space $\abs{\Gamma_g}$ is contractible (and a CW-complex).\label{lemBEDownContractible}
\end{lemma}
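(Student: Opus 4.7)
The plan is to exhibit an extra degeneracy on the simplicial set $\Gamma_g$, from which contractibility of $\abs{\Gamma_g}$ follows by a standard simplicial argument. That $\abs{\Gamma_g}$ is a CW-complex is automatic, being the geometric realisation of a simplicial set, so only the contractibility part requires work.

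Since $g$ is an acyclic directed complete graph on $A$, its arrows determine a unique linear order $L_g$ on $A$ with $a<_{L_g}b$ precisely when $a\to b$ in $g$. Regarding $L_g$ as a $0$-simplex of $\Gamma\br{A}$, I would define
\[ s_{-1}\colon \Gamma_g\br{k}\to \Gamma_g\br{k+1}, \qquad \br{L_0,\dots,L_k}\mapsto \br{L_g, L_0,\dots,L_k}, \]
with the morphisms between successive linear orders uniquely determined by the one-morphism-between-any-two-objects property of $\Lin{A}$. Two tasks then remain: (i) to verify that $s_{-1}$ actually lands in $\Gamma_g$; and (ii) to check the extra-degeneracy identities $d_0 s_{-1}=\id$, $d_i s_{-1}=s_{-1}d_{i-1}$ for $i\geq 1$, and $s_j s_{-1}=s_{-1}s_{j-1}$ for $j\geq 1$. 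Once both hold, $\Gamma_g$ admits an extra degeneracy over the point, which classically implies that $\abs{\Gamma_g}$ is contractible (the explicit contracting homotopy being assembled from iterated applications of $s_{-1}$).

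The main step is (i), a piece of weight bookkeeping. Fix $a\neq b$ in $A$ and suppose without loss of generality that $a\overset{w}{\to}b$ in $g$; in particular $L_g$ has $a<_{L_g}b$. Unpacking the poset order on $\calg_n\br{A}$, the hypothesis $\br{L_0,\dots,L_k}\in \Gamma_g$ says: if $L_0$ has $a<b$, then the weight $w_x\br{a,b}$ is at most $w$; and if $L_0$ has $b<a$, then $w_x\br{a,b}\leq w-1$. The weight on $\br{a,b}$ of the extended simplex $\br{L_g,L_0,\dots,L_k}$ equals $w_x\br{a,b}$ when $L_0$ agrees with $L_g$ on $\cb{a,b}$, and $w_x\br{a,b}+1$ otherwise, so in either case it is bounded by $w$. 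Since the underlying order of $\gamma\br{L_g,L_0,\dots,L_k}$ is $L_g$, this is precisely the condition $\gamma\br{L_g,L_0,\dots,L_k}\leq g$ on the pair $\br{a,b}$, and thus $s_{-1}\br{x}\in \Gamma_g$.

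Step (ii) is then formal: prepending $L_g$ commutes with deletion and repetition of the later vertices, yielding the required simplicial identities. The only subtle point is the weight bookkeeping in (i) --- essentially that the order on $\calg_n\br{A}$ is designed so that a strict reversal costs exactly one weight unit, which is just enough to absorb the single extra switch introduced by $L_g\to L_0$ --- and this is where I expect the heart of the argument to lie.
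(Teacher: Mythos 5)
Your argument is correct and is essentially the same as the paper's: the paper constructs the simplicial set $L_g/\Gamma_g$ and shows $\Gamma_g$ is a retract of it via the section $\sigma$ that prepends $L_g$, which is precisely your extra degeneracy $s_{-1}$ in slightly different packaging. The key weight bookkeeping — that a reversal relative to $L_g$ costs exactly one unit of weight, which is absorbed by the strict inequality in the definition of $\leq$ on $\calg_n\br{A}$ — matches the paper's verification that $\gamma\br{\sigma\br{x}}\leq g$.
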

\begin{proof}
  Let $L_g$ be the set $A$ equipped with the linear order given by $g$. Then $\br{L_g,\id:A\to L_g}$ (which we, by abuse of notation, will also denote by $L_g$) defines a vertex of $\Gamma_g$. We claim that $\Gamma_g$ is a retract of the simplicial set $L_g/\Gamma_g$ whose $k$-simplices are given by
  \[\br{L_g/\Gamma_g}_k\coloneqq \cb{ x=\br{L_0\to\dots\to L_{k+1}}\in \br{\Gamma_{g}}_{k+1}\;\vert\; L_0=L_g }\]
  Note that this simplicial set is contractible (there is an obvious contraction to the vertex $L_g\overset{\id}{\to}L_g$).

  To prove the claim, we wish to define a section $\sigma$ of the map $\pi: L_g/\Gamma_g\to \Gamma_g$ sending a $k$-simplex $x$ (i.e.\ a certain $k+1$-simplex of $\Gamma_g$) to the simplex given by forgetting the first vertex (viewing $\br{L_g/\Gamma_g}_k$ as a subset of $\br{\Gamma_g}_{k+1}$, this is simply the face map $d_0$ of $\Gamma_g$ (but not the face map of the simplicial set $L_g/\Gamma_g$)).

  To do so, suppose we have a $k$-simplex $x\in \Gamma_g$ of the form
   \[
    \begin{tikzcd}
      A \ar[d, "l_0"'] \ar[rd, "l_1"] \ar[rrrd, "l_k"] & \\
      L_0\ar[r, "f_1"'] & L_1\ar[r, "f_2"'] & \dots \ar[r, "f_k"'] & L_k
    \end{tikzcd}
  \]
  Then we define a $k+1$-simplex $\sigma\br{x}\in \Gamma_g$ by
  \[
    \begin{tikzcd}
      & A \ar[ld, "\id"'] \ar[d, "l_0"'] \ar[rd, "l_1"] \ar[rrrd, "l_k"] & \\
       L_g\ar[r, "l_0"'] & L_0\ar[r, "f_1"'] & L_1\ar[r, "f_2"'] & \dots \ar[r, "f_k"'] & L_k
    \end{tikzcd}
  \]
  If this indeed defines a $k$-simplex of $L_g/\Gamma_g$, then it is clear that $\sigma$ gives a section of $\pi$, so it remains to check just that. To this end, let $a,b\in A$, let $x$ be a $k$-simplex of $\Gamma_g$ (i.e.\ $\gamma\br{x}\leq g$) and suppose $a\overset{i}{\to} b$ in $g$ (in particular, we have $a<b$ in $L_g$).
  If $l_0\br{a}<l_0\br{b}$, as well, then $a\overset{j}{\to} b$ in $\gamma\br{x}$ for some $j\leq i$, and the same holds in $\gamma\br{\sigma\br{x}}$. Otherwise, if $l_0\br{b}<l_0\br{a}$, then we must have $b\overset{j}{\to}a$ in $\gamma\br{x}$ for some $j$ strictly less than $i$, by definition of $\gamma$ and the ordering on $\calg\br{A}$. This means that, for $\gamma\br{\sigma\br{x}}$, we will have $a\overset{j+1}{\to}b$, and $j+1\leq i$. Thus, $\gamma\br{\sigma\br{x}}\leq g$, as required.
\end{proof}

\begin{theorem}\label{thmBEisEn}
  The operad $\abs{\Gamma_n}$ is equivalent to $\calg_n$.
\end{theorem}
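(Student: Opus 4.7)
The plan is to combine the lemma just proved with the general principle of Corollary~\ref{corEquivalence}. Specifically, I would apply that corollary to the lax morphism $\gamma \colon \abs{\Gamma_n} \to \calg_n$, viewed as a morphism from an operad in spaces (the geometric realisation of $\Gamma_n$) to an operad in posets, where a point in the interior of the simplex of $\abs{\Gamma_n(A)}$ associated to a non-degenerate $k$-simplex $x\in\Gamma_n(A)$ is sent to the graph $\gamma(x)\in\calg_n(A)$.

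First I would identify the preimage of the downset $\down g$ under $\gamma$ with $\abs{\Gamma_g}$. The inequality $w_{d_i x}(a,b)\le w_x(a,b)$ recorded just after Definition~\ref{defWeightBE} ensures that $\Gamma_g$ is closed under face maps, and the equality $w_{s_j x}(a,b) = w_x(a,b)$ makes it closed under degeneracies, so $\Gamma_g$ is indeed a simplicial subset of $\Gamma_n(A)$ whose realisation is exactly the set of points that $\gamma$ maps into $\down g$. Contractibility of this preimage is then precisely the content of Lemma~\ref{lemBEDownContractible}.

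Second, the map $\abs{\Gamma_n(A)}\to\calg_n(A)$ is automatically sufficiently cofibrant in the sense required by Corollary~\ref{corEquivalence}: this is the remark immediately following the definition of ``sufficiently cofibrant,'' which applies directly since we are in the case of the realisation of a simplicial set whose $\gamma$-preimages of downsets are realisations of simplicial subsets.

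With both hypotheses verified, Corollary~\ref{corEquivalence} delivers the equivalence $\abs{\Gamma_n}\simeq\abs{\calg_n}$, and hence, via Theorem~\ref{propGisEn} and the results of Section~\ref{secCubes}, the statement of the theorem. There is really no hard step remaining: the substantive work has been packaged into Lemma~\ref{lemBEDownContractible} through the explicit retraction onto the contractible slice $L_g/\Gamma_g$, and everything else is a routine verification that the framework of Section~\ref{secEquivalence} applies.
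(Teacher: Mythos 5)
Your proof is correct and follows exactly the same route as the paper: apply Corollary~\ref{corEquivalence} to the lax morphism $\gamma$, using Lemma~\ref{lemBEDownContractible} for contractibility of $\abs{\Gamma_g}=\abs{\gamma^{-1}(\down g)}$ and the remark following the definition of ``sufficiently cofibrant'' for cofibrancy. The only slight overreach is the final sentence invoking Theorem~\ref{propGisEn}; the statement being proved is just $\abs{\Gamma_n}\simeq\abs{\calg_n}$, which is already what Corollary~\ref{corEquivalence} delivers, so no further input is needed.
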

\begin{proof}
  This follows by Corollary~\ref{corEquivalence} using the previous two lemmas, as $\abs{\Gamma_n}$ is the geometric realisation of a simplicial set and each $\abs{\Gamma_g}$ is the geometric realisation of a simplicial subset.
\end{proof}

\section{The Fulton-MacPherson Compactification}

We begin with a quick review of the construction of this compactification, and refer to~\cite{Kon99} or~\cite{GJ94} for details.

\begin{definition}\label{defFM}  Recall the definition of the configuration space from~\ref{defConf}. The group $\R_{>0}\ltimes \R^n$ acts on $\Conf_n\br{A}$ via scaling and translating. As this action is free, we obtain a bundle $\Conf_n\br{A}\to \Conf_n\br{A}/\R_{>0}\ltimes\R^n$. This bundle has a section given by scaling an equivalence class of configurations in $\Conf_n\br{A}/\R_{>0}\ltimes \R^n$ so that the maximum distance between two points is equal to $1$ and translating it such that the barycentre is the origin. Thus, the bundle is trivial. Writing $\NConf_n\br{A}$ for the image of this section (we will refer to its image as {\em normalised\/} configurations), we obtain a diagram
  \[
    \begin{tikzcd}
      \Conf_n\br{A}\ar[r, two heads] & \Conf_n\br{A}/\R_{>0}\ltimes \R^n\\
      \NConf_n\br{A} \ar[u, hook] \ar[ur,iso']
    \end{tikzcd}
  \]
  There exists an embedding of $\NConf_n\br{A}$ into the space $\prod_{a,b,c\in A, a\neq b\neq c\neq a} \R_{>0}\times \prod_{a,b\in A, a\neq b}S^{n-1}$ given by sending a configuration $x$ to the point whose coordinate in $\R_{>0}$ for $a,b,c\in A$ is $\abs{\frac{x\br{a}-x\br{c}}{x\br{b}-x\br{c}}}$ and whose coordinate in $S^{n-1}$ for $a,b\in A$ is $\frac{x\br{a}-x\br{b}}{\abs{x\br{a}-x\br{b}}}$.
  We define $\FM_n\br{A}$ to be the compactification of the image of $\NConf_n\br{A}$ in $\prod_{a,b,c\in A, a\neq b\neq c\neq a}\sbr{0,\infty}\times \prod_{a,b\in A, a\neq b}S^{n-1}$.
\end{definition}

\begin{remark}\label{remStratification}
  The space $\FM_n\br{A}$ has a stratification indexed by rooted trees whose set of leaves is $A$ and which do not have any vertices with only one input edge.
  For two such trees, we will write $S\rightarrowtail T$, if $S$ can be obtained from $T$ by contracting some number of inner edges. 

  \[
    \begin{tikzpicture}[scale=0.5]
      \node[fill, circle] (A) at (7,0) {};
      \node[fill, circle] (B) at (5,2) {};
      \node[fill, circle] (C) at (9,2) {};
      \node (x) at (8,4) {};
      \node (y) at (9,4) {$\dots$};
      \node (z) at (10,4) {};
      \node (v) at (4,4) {};
      \node at (5,4) {$\dots$};
      \node (w) at (6,4) {};
      \draw (A) -- (7,-2);
      \draw (A) -- (B);
      \draw (C) -- (x);
      \draw (C) -- (z);
      \draw (A) -- (C);
      \draw (B) -- (v);
      \draw (B) -- (w);
      \node at (4,0) {$\rightarrowtail$};
      \node[fill, circle] (A2) at (0,0) {};
      \node (x2) at (1,2) {};
      \node (y2) at (2,2) {$\dots$};
      \node (z2) at (3,2) {};
      \node (v2) at (-3,2) {};
      \node at (-2,2) {$\dots$};
      \node (w2) at (-1,2) {};
      \draw (A2) -- (0,-2);
      \draw (A2) -- (x2);
      \draw (A2) -- (z2);
      \draw (A2) -- (v2);
      \draw (A2) -- (w2);
    \end{tikzpicture}
  \]
  
  The stratification has the following properties:
  \begin{enumerate}
  \item $\FM_n\br{A}$ is partitioned into subspaces $\FM_n\br{T}$ indexed by such trees.
  \item $\FM_n\br{T}\subseteq \overline{\FM_n\br{S}}$, if $S\rightarrowtail T$.
  \item If $C_A$ denotes the corolla whose set of leaves is $A$, then $\FM_n\br{A}\supseteq \NConf_n\br{A}=\FM_n\br{C_A}$.
  \item $\FM_n\br{A}$ is a manifold with corners, its interior is given by $\FM_n\br{C_A}$, and the boundary is the union of the $\FM_n\br{T}$ taken over trees $T$ with at least one inner edge.
  \item For each tree $T$, the stratum $\FM_n\br{T}$ is canonically homeomorphic to a product of normalised configuration spaces
    \[ \FM_n\br{T}\cong \prod_{v\in T}\NConf_n\br{\mathrm{in}\br{v}} \]
    where the product is taken over the vertices $v$ of $T$ and $\mathrm{in}\br{v}$ denotes the set of input edges of $v$.
  \end{enumerate}

  \[
    \begin{tikzpicture}[scale=0.5, anchor=base, baseline=5]
      \draw (0,0,0) rectangle (4,0,4);
      \node[fill,circle] (A) at (1.5,0,2) {};
      \node[fill,circle] (B) at (2.5,0,2) {};
      \draw (-2.5,4,0) rectangle (1.5, 4, 4);
      \draw (2.5,4,0) rectangle (6.5, 4, 4);
      \node[fill, circle] at (-1,4,2.5) {};
      \node[fill, circle] at (0,4,1.5) {};
      \node[fill, circle] at (3.8,4,1.5) {};
      \node[fill, circle] at (5.3,4,2.5) {};
      \draw (A) -- (-1,4,4);
      \draw (A) -- (1.5,4,4);
      \draw[dotted] (A) -- (-2.5,4,0);
      \draw[dotted] (A) -- (0,4,0);
      \draw (B) -- (4,4,4);
      \draw (B) -- (6.5,4,4);
      \draw[dotted] (B) -- (2.5,4,0);
      \draw[dotted] (B) -- (5,4,0);      
    \end{tikzpicture}\in
    \FM_2\br{
      \begin{tikzpicture}[scale=0.5, anchor=base, baseline]
        \node[fill, circle] (A) at (7,-1) {};
        \node[fill, circle] (B) at (5,1) {};
        \node[fill, circle] (C) at (9,1) {};
        \node (x) at (8,3) {};
        \node (z) at (10,3) {};
        \node (v) at (4,3) {};
        \node (w) at (6,3) {};
        \draw (A) -- (7,-3);
        \draw (A) -- (B);
        \draw (C) -- (x);
        \draw (C) -- (z);
        \draw (A) -- (C);
        \draw (B) -- (v);
        \draw (B) -- (w);
      \end{tikzpicture}
    }
  \]
\end{remark}

\begin{example}
  For a set $A$ with $\abs{A}\leq 2$, the space $\NConf_n\br{A}$ is already compact --- in other words, we have $\FM_n\br{A}=\NConf_n\br{A}$ and the stratification consists only of the single tree $C_A$. 
\end{example}

\begin{definition}
  The spaces $\FM_n\br{A}$ have the structure of an operad called the {\em Fulton-MacPherson operad}. The composition operation is given as follows:

  For finite sets $A$ and $B$, $a\in A$, and trees $S$ and $T$ whose sets of leaves are $A$ and $B$, respectively, as well as elements \[\br{y_v}_{v\in S}\in \prod_{v\in S}\NConf_n\br{\mathrm{in}\br{v}}=\FM_n\br{S}\] and \[\br{x_v}_{v\in T}\in \prod_{v\in T}\NConf_n\br{\mathrm{in}\br{v}}=\FM_n\br{T}\] we can construct a new tree $S\circ_a T$ by grafting $T$ onto the leaf $a$ of $S$. The set of leaves of $S\circ_a T$ is $A\sbr{B/a}$, its set of vertices is the union of the sets of vertices of $S$ and $T$, and the input edges of some vertex are the same as the input edges of the corresponding vertex of $S$ or $T$. Thus, the family $\br{z_v}_{v\in S\circ_a T}$ with $z_v=x_v$ for $v\in T$ and $z_v=y_v$ for $v\in S$ defines a point of \[\prod_{v\in S\circ_a T}\NConf_n\br{\mathrm{in}\br{v}}=\FM_n\br{S\circ_a T}\subseteq \FM_n\br{A\sbr{B/a}}\]
    \[
    \begin{tikzpicture}[scale=0.5]
        \node[fill, circle] (A) at (0,0) {};
        \node[label=above:$a$] (B) at (-2,2) {};
        \node (x) at (0,2) {};
        \node (y) at (1,2) {$\dots$};
        \node (z) at (2,2) {};
        \draw (A) -- (0,-2);
        \draw (A) -- (B);
        \draw (A) -- (x);
        \draw (A) -- (z);
        \node at (3,0) {$\circ_a$};
        \node[fill, circle] (A2) at (6,0) {};
        \node (x2) at (4,2) {};
        \node (y2) at (6,2) {$\dots$};
        \node (z2) at (8,2) {};
        \draw (A2) -- (6,-2);
        \draw (A2) -- (x2);
        \draw (A2) -- (z2);
        \node at (9,0) {=};
        \node[fill, circle] (A3) at (13,-1) {};
        \node[fill, circle] (B3) at (11,1) {};
        \node at (11,3) {$\dots$};
        \node (x3) at (13,1) {};
        \node (y3) at (14,1) {$\dots$};
        \node (z3) at (15,1) {};
        \draw (A3) -- (13,-3);
        \draw (A3) -- (B3);
        \draw (A3) -- (x3);
        \draw (A3) -- (z3);
        \node (v) at (10,3) {};
        \node (w) at (12,3) {};
        \draw (B3) -- (v);
        \draw (B3) -- (w);
      \end{tikzpicture}
  \]
\end{definition}

\section{The Boardman-Vogt resolution}

In this section, we recall the Boardman-Vogt resolution~\cite{BV73} in spaces and in posets.

\begin{definition}
  If $P$ is a topological operad, its {\em Boardman-Vogt resolution\/} is an equivalence of operads $W\br{P}\to P$ where $W\br{P}\br{A}$, for some finite set $A$, is defined as follows: first, for a rooted tree $T$ with $A$ as its set of leaves, we consider the space of all labellings of the vertices $v$ of $T$ by points in $x\br{v}\in P\br{\mathrm{in}\br{v}}$, and all of its inner edges $e$ by a {\em length\/} $\lambda\br{e}\in \sbr{0,1}$. The space $W\br{P}\br{A}$ is obtained by quotienting the union of those spaces for all such trees
  \[ \coprod_T\br{\prod_{v}P\br{\mathrm{in}\br{v}}\times \prod_e\sbr{0,1}} \]
  by the equivalence relation generated by isomorphisms of trees, as well as
  \begin{enumerate}
  \item A point with an edge $e$ between two vertices $v_1$ and $v_2$ labelled $0$ is equivalent to the point of the tree obtained by contracting this edge given by the same labels on any edges besides $e$ and all vertices besides $v_1$ and $v_2$ and whose label on the vertex given by the contraction of $v_1$ and $v_2$ is given by $x\br{v_1}\circ_{e}x\br{v_2}$ where $x\br{v_1}$ and $x\br{v_2}$ are the labels of $v_1$ and $v_2$ in the original point:
        \[
      \begin{tikzpicture}[scale=0.5]
        \node[fill, circle, label=below right:{$f$}] (A) at (0,0) {};
        \node[fill, circle, label=below:{$g$}] (B) at (-2,2) {};
        \node (x) at (0,2) {};
        \node (y) at (1,2) {$\dots$};
        \node (z) at (2,2) {};
        \node (v) at (-3,4) {};
        \node at (-2,4) {$\dots$};
        \node (w) at (-1,4) {};
        \draw (A) -- (0,-2);
        \draw (A) -- (B) node[midway, label={[label distance=-8]-135:$0$}, label={[label distance=-8]45:$e$}] {};
        \draw (A) -- (x);
        \draw (A) -- (z);
        \draw (B) -- (v);
        \draw (B) -- (w);
        \node at (4,0) {$\sim$};
        \node[fill, circle, label=below right:{$f\circ_e g$}] (A2) at (8,0) {};
        \node (x2) at (9,2) {};
        \node (y2) at (10,2) {$\dots$};
        \node (z2) at (11,2) {};
        \node (v2) at (5,2) {};
        \node at (6,2) {$\dots$};
        \node (w2) at (7,2) {};
        \draw (A2) -- (8,-2);
        \draw (A2) -- (x2);
        \draw (A2) -- (z2);
        \draw (A2) -- (v2);
        \draw (A2) -- (w2);
      \end{tikzpicture}
    \]
  \item A point of a tree $T$ containing a vertex $v$ with only a single input edge labelled by the unit $\id$ of the operad $P$ is equivalent to the point in the tree obtained from $T$ by removing $v$ and linking the other vertex of its input edge to the other vertex of its output edge given by the same labels on all other edges and vertices and whose label on the new edge is given by the maximum of the labels on the input and the output edge of $v$: 
    \[
      \begin{tikzpicture}[scale=0.7]
        \node[fill, circle] at (0,2) {};
        \node[fill, circle] at (0,-2) {}; 
        \node[fill, circle,label=right:{$\id$}] (A) at (0,0) {};
        \draw (A) -- (0,-2) node[midway, label=right:{$x$}] {};
        \draw (A) -- (0,2) node[midway, label=right:{$y$}] {};
        \node at (2,0) {$\sim$};
        \node[fill, circle] at (4,2) {};
        \node[fill, circle] at (4,-2) {};
        \draw (4,-2) -- (4,2) node[midway, label=right:{$\max\br{x,y}$}] {};
      \end{tikzpicture}
    \]
  \end{enumerate}

  The spaces $W\br{P}\br{A}$ have the structure of an operad where the composition is given by grafting the trees and labelling the new inner edge by $1$. The map $W\br{P}\to P$ is given as follows: for a point $x$ given by some labelling of some tree $T$, we first map it to the point $x^\prime$ by changing the labels on all edges to zero. This point is equivalent to a labelling of $C_A$, the corolla on $A$, but such points are completely determined by an element of $P\br{A}$, that we can map $x$ to. Informally, we might think of this map as just forgetting all edge labels and composing all labels on vertices.
\end{definition}

\begin{remark}\label{remBVPosets}
  The Boardman-Vogt resolution also makes sense in categories other than spaces by replacing $\sbr{0,1}$ by some appropriate interval object. In posets, for example, we can take the linear order $\sbr{1}=\cb{0\leq 1}$ and define $W\br{P}$, accordingly.
  In this case, as any edge labelled $0$ can be contracted, any point of $W\br{P}\br{A}$ is represented by a labelling that assigns $1$ to all inner edges (in other words, we may disregard this labelling). Thus, we may represent the elements simply by some tree $T$ all of whose vertices have labels by elements of $P$ other than the unit. These labelled trees are ordered by $\br{S,p}\leq \br{T,q}$, if $S\rightarrowtail T$ and the labelling $p$ is less than or equal to the one given by composing the elements in the labelling $q$ of $T$ along the contracted edges.

  The corresponding map $\pi: W\br{P}\to P$ gives an equivalence of operads --- its inverse is given by $\iota: P\to W\br{P}$ sending an element of $P\br{A}$ to the corolla $C_A$ with the element as the label of the unique vertex (note that $\iota$ is only a map of symmetric sequences, not of operads). For a fixed $A$ and a tree $T$ with $A$ as its set of leaves, we have $C_A\rightarrowtail T$, so that, by definition of the map $\pi$, we have $\iota\circ\pi\leq \id$ and, further, clearly, $\pi\circ\iota=\id$.

  The poset $W\br{P}\br{A}$ also possesses a stratification indexed by trees with $W\br{P}\br{T}$ consisting of the labellings of $T$, and we have
  \[ W\br{P}\br{T}=\prod_{v\in T}P\br{\mathrm{in}\br{v}} \]
\end{remark}

\section{Relating $\FM_n$ to $\calg_n$}

Recall the definition of the maps $\psi:\NConf_n\br{A}\subseteq \Conf_n\br{A}\to \calg_n\br{A}\subseteq \calg_n^{ext}\br{A}$ from~\ref{remGraphsConf}. Clearly, these extend to maps
\[ \FM_n\br{T}=\prod_{v\in T}\NConf_n\br{A}\to \prod_{v\in T}\calg_n\br{A}=W\br{\calg_n}\br{T} \]
and they fit together to define a map $\psi: \FM_n\br{A}\to W\br{\calg_n}\br{A}$ --- this gives a map of operads.

\begin{theorem}\label{thmFMisEn}
  The map $\FM_n\to W\br{\calg_n}$ gives rise to an equivalence of operads between $\FM_n$ and $\calg_n$.
\end{theorem}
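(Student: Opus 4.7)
The plan is to apply Corollary~\ref{corEquivalence} to the morphism $\psi\colon \FM_n \to W\br{\calg_n}$ and then compose with the Boardman--Vogt equivalence $\abs{W\br{\calg_n}} \simeq \abs{\calg_n}$ from Remark~\ref{remBVPosets}. That $\psi$ is in fact a strict morphism of operads is straightforward: composition in both operads is by grafting trees, and $\psi$ is defined stratum-wise by the product of the stratum maps $\NConf_n\br{\mathrm{in}\br{v}}\to\calg_n\br{\mathrm{in}\br{v}}$, which assemble compatibly under grafting.

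Sufficient cofibrance of $\psi_A\colon \FM_n\br{A} \to W\br{\calg_n}\br{A}$ I would verify via Lemma~\ref{lemEquivalence}\ref{lemeqa}\ref{lemeqaii}: the poset $W\br{\calg_n}\br{A}$ is finite (hence well-founded), and the manifold-with-corners structure of $\FM_n\br{A}$ described in Remark~\ref{remStratification} makes the required inclusions $\colim_{q' < q} \FM_n\br{A}_{\down q'} \hookrightarrow \FM_n\br{A}_{\down q}$ into cofibrations.

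The main step, and the principal obstacle, is showing that each fibre $\psi^{-1}\br{\down q}$ is weakly contractible. Fix $q = \br{T, \br{g_v}_{v \in T}}$, and let $g_T^{\circ} \in \calg_n\br{A}$ denote the operadic composition of the $g_v$'s along $T$. I would argue that $\psi^{-1}\br{\down q}$ is naturally a sub-compactification of its open stratum piece
\[ \psi^{-1}\br{\down q} \cap \FM_n\br{C_A} \;=\; \br{\NConf_n\br{A}}_{\down g_T^{\circ}}, \]
which is weakly contractible by Theorem~\ref{propGisEn} combined with the equivalence $\NConf_n \simeq \Conf_n$ coming from the trivial $\R_{>0} \ltimes \R^n$-bundle in Definition~\ref{defFM}. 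The contributions from lower strata $\FM_n\br{S}$ for $S\rightarrowtail T$, namely the products $\prod_{w\in S} \br{\NConf_n\br{\mathrm{in}\br{w}}}_{\down g_{T_w}^{\circ}}$ (where $T_w$ is the subtree of $T$ collapsed to $w$), are themselves products of contractibles, and they attach via the collar structure of $\FM_n\br{A}$ near its boundary. A collar-neighbourhood deformation retract then shows $\psi^{-1}\br{\down q}$ retracts onto its open stratum piece, hence is contractible.

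Once Corollary~\ref{corEquivalence} delivers $\FM_n \simeq \abs{W\br{\calg_n}}$, composing with the BV equivalence $\abs{W\br{\calg_n}} \simeq \abs{\calg_n}$ yields the claimed equivalence of operads $\FM_n \simeq \calg_n$. The most delicate point is the collar-based deformation retract, which requires careful compatibility between the tree stratification and the manifold-with-corners structure of $\FM_n\br{A}$; once this is in place, the rest of the argument is a direct application of the machinery from Section~\ref{secEquivalence}.
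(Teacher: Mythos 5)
Your proposal takes a genuinely different route from the paper. You want to apply Corollary~\ref{corEquivalence} directly to $\psi\colon\FM_n\to W(\calg_n)$, which forces you to prove both sufficient cofibrance of each $\psi_A$ and weak contractibility of every $\psi^{-1}(\down q)$. The paper instead builds a $3\times 3$ diagram relating $\FM_n(A)$, $\NConf_n(A)$ and $\Conf_n(A)$ on the left to $W(\calg_n)(A)$ and $\calg_n(A)$ on the right, establishes that all vertical maps and the bottom (and middle) rows are equivalences --- the latter by Theorem~\ref{propGisEn} applied to $\Conf_n$ and $\NConf_n$ --- and then gets the top row by a diagram chase. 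Crucially, the paper never needs to understand the subspaces $\psi^{-1}(\down q)\subseteq\FM_n(A)$ at all.

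There is a genuine gap in your argument, and it is precisely the step the paper's chase is designed to avoid. You assert that the open stratum piece $(\NConf_n(A))_{\down g_T^\circ}$ is weakly contractible ``by Theorem~\ref{propGisEn}''. But Theorem~\ref{propGisEn} does \emph{not} establish contractibility of $\psi^{-1}(\down g)$ for all $g\in\calg_n(A)$; its proof goes through the Fadell--Neuwirth fibration and Quillen's Theorem B, not through Lemma~\ref{lemEquivalence}\ref{lemeqc}. The only downset contractibility the paper proves is Theorem~\ref{thmCubesProperGraphs}, for the map into the poset $\PP_n$ of \emph{proper} graphs. Your $g_T^\circ$ is an arbitrary element of $\calg_n(A)$ --- indeed for $q=(C_A,g)$ with $T$ a corolla you are simply asking for contractibility of $\psi^{-1}(\down g)$ for \emph{every} $g\in\calg_n(A)$, including improper ones, and the proper graphs do not form a suboperad. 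The paper's remark after Theorem~\ref{thmCubesProperGraphs}, with its counterexample $\PP_2/g\simeq S^1$, shows exactly that the behaviour of downsets below improper $g$ is delicate; this is why the authors route around it. Two further points you flag but do not carry out would also need real work: the collar of the manifold-with-corners $\FM_n(A)$ must be shown to preserve each $\psi^{-1}(\down q)$ (which amounts to checking that the scaling collar can only decrease $\calg_n$-weights), and the sufficient cofibrance claim via Lemma~\ref{lemEquivalence}\ref{lemeqa}\ref{lemeqaii} needs an argument, since the $\FM_n(A)_{\down q}$ are neither open nor closed in $\FM_n(A)$. Until the contractibility claim for improper $g$ is settled, your route does not go through, whereas the paper's chase sidesteps all three difficulties.
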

\begin{proof}
  We have already seen that $W\br{\calg_n}\to\calg_n$ is an equivalence, so it suffices to see that we obtain an equivalence between $\FM_n$ and $W\br{\calg_n}$. For this, consider the diagram
  \[
    \begin{tikzcd}
      \FM_n\br{A} & \abs{\FM_n\br{A}_{W\br{\calg_n}\br{A}}} \ar[l]\ar[r] & \abs{W\br{\calg_n}\br{A}} \\
      \NConf_n\br{A} \ar[u, hook] \ar[d, hook'] & \abs{\NConf_n\br{A}_{\calg_n\br{A}}}\ar[l]\ar[r]\ar[u, hook] \ar[d, hook'] & \abs{\calg_n\br{A}}\ar[u, hook] \ar[d, symbol={=}] \\
      \Conf_n\br{A} & \abs{\Conf_{n}\br{A}_{\calg_n\br{A}}}\ar[l]\ar[r] & \abs{\calg_n\br{A}} \\
    \end{tikzcd}
  \]
  Here, all the maps in the first and last column are equivalences:
  \begin{itemize}
  \item The map $\NConf_n\br{A}\to \FM_n\br{A}$ is an equivalence as, by Remark~\ref{remStratification}, it is the inclusion of the interior of a manifold with corners.
  \item The map $\NConf_n\br{A}\to \Conf_n\br{A}$ is the equivalence we remarked on in~\ref{defFM}.
  \item The map $\abs{\calg_n\br{A}}\to \abs{W\br{\calg_n}\br{A}}$ is an equivalence, as it is the geometric realisation of the equivalence from Remark~\ref{remBVPosets}.
  \item The map $\abs{\calg_n\br{A}}\to \abs{\calg_n\br{A}}$ is just the identity.
  \end{itemize}
  Further, note that the bottom row is an equivalence by~\ref{propGisEn}. Similarly, or via diagram chase, we obtain that the middle row is one, and so is the top-left map. Thus, we obtain the diagram
  \[
    \begin{tikzcd}
      \FM_n\br{A} & \abs{\FM_n\br{A}_{W\br{\calg_n}\br{A}}} \ar[l,iso']\ar[r] & \abs{W\br{\calg_n}\br{A}} \\
      \NConf_n\br{A} \ar[u, iso, hook] \ar[d, iso', hook'] & \abs{\NConf_n\br{A}_{\calg_n\br{A}}}\ar[l, iso']\ar[r, iso]\ar[u, hook] \ar[d, hook'] & \abs{\calg_n\br{A}}\ar[u, iso', hook] \ar[d,symbol={=}] \\
      \Conf_n\br{A} & \abs{\Conf_n\br{A}_{\calg_n\br{A}}} \ar[l, iso']\ar[r, iso] & \abs{\calg_n\br{A}} \\
    \end{tikzcd}
  \]
  and by its commutativity, all other maps must be equivalences as well, proving the theorem.
\end{proof}

\section{Operads from $n$-fold monoidal categories}

In this section, we will define the operad $\calm_n$ based on $n$-fold monoidal categories introduced by Balteanu, Fiedorowicz, Schwänzl and Vogt in~\cite{BFSV98} and give a direct proof that it is an $E_n$ operad. Our formalism will, however, be quite different from that in~\cite{BFSV98}.

\begin{definition}[The operad $\calm_n$]
  For $n\geq 1$, we define the operad $\calm_n$ in posets as follows: for a finite set $A$, we let $\calm_n\br{A}$ be the equivalence class of planar rooted trees whose set of leaves is $A$ and whose vertices are labelled by the numbers $1,\dots, n$. The equivalence relation is generated by

  \begin{enumerate}
  \item Any two stumps are equivalent.
    \[
      \begin{tikzpicture}
        \node[fill, circle,label=below right:$i$] (A) at (0,0) {};
        \node[fill, circle,label=below right:$j$] (B) at (4,0) {};
        \draw (A) -- (0,-2);
        \draw (B) -- (4,-2);
        \node at (2, -1) {$\sim$};
      \end{tikzpicture}
    \]
  \item Any stump, other than one directly attached to the root edge, can be removed.
    \[
      \begin{tikzpicture}[scale=0.7]
        \node[fill, circle] (A) at (0,0) {};
        \node[fill, circle] (B) at (-2,2) {};
        \node (x) at (0,2) {};
        \node (y) at (1,2) {$\dots$};
        \node (z) at (2,2) {};
        \draw (A) -- (0,-2);
        \draw (A) -- (B);
        \draw (A) -- (x);
        \draw (A) -- (z);
        \node at (4,0) {$\sim$};
        \node[fill, circle] (A2) at (6,0) {};
        \node (x2) at (6,2) {};
        \node (y2) at (7,2) {$\dots$};
        \node (z2) at (8,2) {};
        \draw (A2) -- (6,-2);
        \draw (A2) -- (x2);
        \draw (A2) -- (z2);
      \end{tikzpicture}
    \]
  \item Edges between vertices with the same label can be contracted.
    \[
      \begin{tikzpicture}[scale=0.5]
        \node[fill, circle,label=below right:$i$] (A) at (0,0) {};
        \node[fill, circle,label=below:$i$] (B) at (-2,2) {};
        \node (x) at (0,2) {};
        \node (y) at (1,2) {$\dots$};
        \node (z) at (2,2) {};
        \node (v) at (-3,4) {};
        \node at (-2,4) {$\dots$};
        \node (w) at (-1,4) {};
        \draw (A) -- (0,-2);
        \draw (A) -- (B);
        \draw (A) -- (x);
        \draw (A) -- (z);
        \draw (B) -- (v);
        \draw (B) -- (w);
        \node at (4,0) {$\sim$};
        \node[fill, circle, label=below right:$i$] (A2) at (8,0) {};
        \node (x2) at (9,2) {};
        \node (y2) at (10,2) {$\dots$};
        \node (z2) at (11,2) {};
        \node (v2) at (5,2) {};
        \node at (6,2) {$\dots$};
        \node (w2) at (7,2) {};
        \draw (A2) -- (8,-2);
        \draw (A2) -- (x2);
        \draw (A2) -- (z2);
        \draw (A2) -- (v2);
        \draw (A2) -- (w2);
      \end{tikzpicture}
    \]
  \item Vertices with only one input edge can be removed.
    \[
      \begin{tikzpicture}[scale=0.5]
        \node[fill, circle] (A) at (0,0) {};
        \draw (A) -- (0,-2);
        \draw (A) -- (0,2);
        \node at (2,0) {$\sim$};
        \draw (4,-2) -- (4,2);
      \end{tikzpicture}
    \]
  \end{enumerate}

  Further, for two such trees $T_1$ and $T_2$, we define the order by setting $T_1\leq T_2$, if, for any two $a,b\in A$, the label on the lowest vertex of the path from $a$ to $b$ in $T_1$ is less than or equal to that in $T_2$, and the inequality is strict, if the order of $a$ and $b$ (with respect to the planar structure) is different in $T_1$ and $T_2$. Equivalently, this is the first vertex in the intersection between the paths from $a$ and, respectively, $b$ to the root.
  \[
    \begin{tikzpicture}[scale=0.5]
      \node[label=above:$a$] (a) at (-3,4) {};
      \node[label=above:$b$] (b) at (-1,4) {};
      \node[fill, circle, label=below:$2$] (L) at (-2,2) {};
      \node[label=above:$c$] (c) at (1,4) {};
      \node[label=above:$d$] (d) at (3,4) {};
      \node[fill, circle, label=below:$2$] (R) at (2,2) {};
      \node[fill, circle, label=below right:$1$] (M) at (0,0) {};
      \draw (a) -- (L);
      \draw (b) -- (L);
      \draw (c) -- (R);
      \draw (d) -- (R);
      \draw (R) -- (M);
      \draw (L) -- (M);
      \draw (M) -- (0,-2);
      \node at (4,1) {$\leq$};

      \node[label=above:$a$] (a2) at (5,4) {};
      \node[label=above:$c$] (c2) at (7,4) {};
      \node[fill, circle, label=below:$1$] (L2) at (6,2) {};
      \node[label=above:$b$] (b2) at (9,4) {};
      \node[label=above:$d$] (d2) at (11,4) {};
      \node[fill, circle, label=below:$1$] (R2) at (10,2) {};
      \node[fill, circle, label=below right:$2$] (M2) at (8,0) {};
      \draw (a2) -- (L2);
      \draw (c2) -- (L2);
      \draw (b2) -- (R2);
      \draw (d2) -- (R2);
      \draw (R2) -- (M2);
      \draw (L2) -- (M2);
      \draw (M2) -- (8,-2);
    \end{tikzpicture}
  \]

  The operad structure is defined as follows: the composition is given by grafting trees and bijections $B\isoto A$ act by relabelling the vertices. The unit, for $\cb{a}=A$, is given by the tree consisting of only an edge labelled $a$. This makes $\calm_n$ into an operad in posets.

  \[
    \begin{tikzpicture}[scale=0.5]
        \node[fill, circle, label=below right:{$i$}] (A) at (0,0) {};
        \node[label=above:$a$] (B) at (-2,2) {};
        \node (x) at (0,2) {};
        \node (y) at (1,2) {$\dots$};
        \node (z) at (2,2) {};
        \draw (A) -- (0,-2);
        \draw (A) -- (B);
        \draw (A) -- (x);
        \draw (A) -- (z);
        \node at (3,0) {$\circ_a$};
        \node[fill, circle, label=below right:{$j$}] (A2) at (6,0) {};
        \node (x2) at (4,2) {};
        \node (y2) at (6,2) {$\dots$};
        \node (z2) at (8,2) {};
        \draw (A2) -- (6,-2);
        \draw (A2) -- (x2);
        \draw (A2) -- (z2);
        \node at (9,0) {=};
        \node[fill, circle, label=below right:{$i$}] (A3) at (13,-1) {};
        \node[fill, circle, label=below:{$j$}] (B3) at (11,1) {};
        \node at (11,3) {$\dots$};
        \node (x3) at (13,1) {};
        \node (y3) at (14,1) {$\dots$};
        \node (z3) at (15,1) {};
        \draw (A3) -- (13,-3);
        \draw (A3) -- (B3);
        \draw (A3) -- (x3);
        \draw (A3) -- (z3);
        \node (v) at (10,3) {};
        \node (w) at (12,3) {};
        \draw (B3) -- (v);
        \draw (B3) -- (w);
      \end{tikzpicture}
  \]

  We will write $\abs{\calm_n}$ for the topological operad obtained by taking the classifying spaces of the posets. We will call this the {\em iterated monoidal operad}.
\end{definition}

\begin{remark}
  Any such equivalence class of trees is canonically represented by a tree that contains no edges between vertices of the same label, vertices with only one input edge, or stumps (unless the tree itself is only a single stump, in which case we also have to choose a canonical label for this stump, say, $1$). We will usually identify the equivalence class with its canonical representative.
\end{remark}

\begin{remark}
  Such a labelled tree corresponds to an object of the $n$-fold monoidal category generated by one object for each element of $A$: inductively, we define the tree consisting of only a leaf (which is, at the same time, the root) labelled $a$ (with $\cb{a}=A$) to correspond to the object $a$. Now, for a tree whose root vertex is labelled $l$ and whose inputs (viewed, again, as rooted trees themselves) correspond to the objects $o_1,\dots,o_k$ (in the order corresponding to the planar structure), we define the corresponding object to be $o_1\square_l \dots \square_l o_k$. In this context, the first two relations on our trees correspond to the fact that we have a common unit, and the last two correspond to (strict) associativity and unitality of the monoidal products.

  \[
    \begin{tikzpicture}[scale=0.5]
      \node[label=above:$a$] (a) at (-3,4) {};
      \node[label=above:$b$] (b) at (-1,4) {};
      \node[fill, circle, label=below:$2$] (L) at (-2,2) {};
      \node[label=above:$c$] (c) at (1,4) {};
      \node[label=above:$d$] (d) at (3,4) {};
      \node[fill, circle, label=below:$2$] (R) at (2,2) {};
      \node[fill, circle, label=below right:$1$] (M) at (0,0) {};
      \draw (a) -- (L);
      \draw (b) -- (L);
      \draw (c) -- (R);
      \draw (d) -- (R);
      \draw (R) -- (M);
      \draw (L) -- (M);
      \draw (M) -- (0,-2);
      \node at (4,1) {$\sim$};
      \node at (8,1) {$\br{a\square_2 b}\square_1\br{c\square_2 d}$};
    \end{tikzpicture}
  \]
  
  This was the original definition used in~\cite{BFSV98}. They set $\calm_n\br{k}$ to be the free $n$-fold monoidal category on the objects $1,\dots, k$ whose morphisms are generated by the interchange maps of the form \[\br{a_1\square_i \dots \square_i a_m}\square_j\br{b_1\square_i \dots \square_i b_m}\to \br{a_1\square_j b_1}\square_i\dots \square_i \br{a_m\square_j b_m}\] for $j<i$ (in the case $m=2$, this interchange map is simply the example on the previous page). They have shown that this makes the $\calm_n\br{k}$ into a poset whose order is the same as the one we defined on the trees.
\end{remark}

Now, we will relate this operad to the little cubes:

\begin{remark}
  It is well known that the little cubes operad $\calc_n$ is equivalent to that of decomposable configurations (recall that a configuration $c$ of cubes is decomposable, if it consists either of only a single cube, or there is some hyperplane, orthogonal to one of the axes, dividing the configuration into two (non-empty) decomposable configurations). Thus, we may instead work to relate the operad $\cald_n$ of decomposable configurations of little cubes to $\calm_n$. See, for example, Dunn~\cite{Dun88} for reference.
\end{remark}

\begin{definition}  
  We will associate to each tree $T\in \calm_n\br{A}$ a subspace $G\br{T}\subseteq \cald_n\br{A}$ of those configurations possessing a decomposition resembling $T$. More precisely, we set $G\br{T}$ to be the set of such configurations $c$, such that, for each $a,b\in A$ with $a$ to the left of $b$ in the planar structure on $T$ and with the lowest vertex in the path from $a$ to $b$ labelled $i$, we have that $i$-th coordinates of the points of the cube $c_a$ are lower than those of the $i$-th coordinates of the points of the cube $c_b$. Equivalently, $G\br{T}$ is recursively defined as follows: $G\br{\cb{a}}=\cald_n\br{\cb{a}}$ for any singleton $\cb{a}$. Further, for some tree $T$ whose root vertex is labelled $i$ and has, as inputs, the trees $T_1,\dots, T_k$ (in that order according to the planar structure), the set $G\br{T}$ consists of those configurations $c$ such that there are $k-1$ hyperplanes orthogonal to the $i$-th axis dividing the configuration into $k$ smaller configurations $c_1,\dots,c_k$ (in order along the $i$-th axis) with $c_j\in G\br{T_j}$.
  
  Next, we define spaces $F\br{T}$, for each $T\in \calm_n\br{A}$ via $F\br{T}\coloneqq \bigcup_{T^ \prime\leq T}G\br{T^\prime}$. We will also define a map $\varphi: \cald_n\br{A}\to \calm_n\br{A}$ by mapping $c$ to the smallest tree $T$ with $c\in F\br{T}$ --- this gives a lax morphism of operads. Note that $F\br{T}=\varphi^ {-1}\br{\down T}$, but we do not necessarily have $G\br{T}=\varphi^ {-1}\br{T}$.

\end{definition}

The following proof is mostly a direct translation of that of~\cite{BFSV98} to our setting.

\begin{prop}\label{propEqCM}
  The map $\varphi: \cald_n\to \calm_n$ induces an equivalence. In particular, the operad $\calm_n$ is an $E_n$-operad.
\end{prop}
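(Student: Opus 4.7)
The plan is to apply Corollary~\ref{corEquivalence} to the lax morphism $\varphi\colon \cald_n\to \calm_n$ and then combine the resulting equivalence $\cald_n\simeq \abs{\calm_n}$ with Dunn's theorem~\cite{Dun88} that $\cald_n\hookrightarrow \calc_n$ is an equivalence of operads, in order to conclude that $\calm_n$ is an $E_n$-operad.

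For Corollary~\ref{corEquivalence} to apply, two things must be verified. First, each component $\varphi_A\colon \cald_n(A)\to \calm_n(A)$ must be sufficiently cofibrant. This follows from part~(a)(i) of Lemma~\ref{lemEquivalence}: the space $\cald_n(A)$ is open in the manifold $\calc_n(A)$, since decomposability is stable under small perturbations of the cubes, and each preimage $\varphi^{-1}(\down T)=F(T)=\bigcup_{T'\leq T}G(T')$ is a union of strata each cut out by strict linear inequalities on the coordinates of the cubes. Hence every $F(T)$ is open in $\cald_n(A)$.

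The heart of the argument is to show that each $F(T)$ is weakly contractible. I would proceed by induction on the number of non-leaf vertices of $T$; the base case of the single-leaf tree is trivial because then $F(T)=\cald_n(\cb{a})$ is convex. For the inductive step, the plan is first to construct a strong deformation retraction of $F(T)$ onto $G(T)$, and then to exploit the product structure of $G(T)$: if the root of $T$ carries the label $i$ and its planar children are the subtrees $T_1,\dots,T_k$ with leaf sets $A=A_1\sqcup\cdots\sqcup A_k$, then $G(T)$ fibres over a convex space of positions of the $k-1$ separating hyperplanes orthogonal to the $i$-th axis, with fibre $\prod_j G(T_j)$. By the inductive hypothesis (applied to the preoperads of little cubes in each slab) each $G(T_j)$ is contractible, and thus so is $G(T)$.

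The main obstacle is defining the retraction $F(T)\to G(T)$ in a continuous and coherent way. The subtle case is a configuration $c\in G(T')$ with $T'<T$ such that for some pair $a<b$ (in the planar order of $T$, with meet labelled $i$) the planar order in $T'$ is opposite; then $a,b$ are only separated along some lower-index coordinate $i'<i$, and one has to physically move the two cubes around each other. I would handle this by running the retraction top-down in the root of $T$: push apart cubes along the $i$-axis while using the available slack in the strictly lower-index coordinates --- where the required separations are already guaranteed by $c\in F(T)$ --- to avoid collisions, and then recurse on each slab. The strictness of the inequality $i'<i$ when the planar orders disagree is exactly what ensures that this rearrangement does not leave $F(T)$ at any moment of the homotopy. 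Once contractibility of every $F(T)$ is established, Corollary~\ref{corEquivalence} gives $\cald_n\simeq \abs{\calm_n}$, and composing with $\cald_n\simeq \calc_n$ from~\cite{Dun88} completes the proof.
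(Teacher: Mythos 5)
Your overall strategy (verify sufficient cofibrancy, show each $F(T)$ is contractible, apply Corollary~\ref{corEquivalence}, combine with Dunn's theorem) matches the paper. However, there are two concrete problems.

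First, the claim that $F(T)$ is \emph{open} in $\cald_n(A)$ is false, so criterion~\ref{lemeqa}\ref{lemeqai} of Lemma~\ref{lemEquivalence} does not apply. Because the little cubes are open, the condition that $c_a$ is $i$-below $c_b$ is $w(a)_i \leq v(b)_i$ --- a \emph{non-strict} inequality (cf.\ Lemma~\ref{lemDownCW}) --- so each $G(T)$, and hence $F(T)=\bigcup_{T'\leq T}G(T')$, is cut out by closed conditions and is not open. (Indeed, $\cald_n(A)$ is not even open in $\calc_n(A)$: a configuration that is decomposable only via a hyperplane tangent to cubes on both sides can lose decomposability under arbitrarily small perturbations.) The paper instead appeals to criterion~\ref{lemeqa}\ref{lemeqaii}, noting that each $F(T)$ is a finite union of convex subsets of $\R^N$, and that an inclusion of one such finite union into a larger one is a cofibration.

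Second, your argument for contractibility of $F(T)$ is both more complicated and less precise than needed. Since $G(T)$ is cut out by a conjunction of linear inequalities in the coordinates $(v(a),w(a))$, it is convex; no fibration over hyperplane positions is required, only a short induction to establish non-emptiness. The real work is the deformation retraction $F(T)\to G(T)$, and here your sketch --- ``push apart along the $i$-axis, using slack in lower coordinates, then recurse'' --- captures the right idea but glosses over the point where the paper is careful: the retraction is built level by level in $t$ via the equivalence $\sim_t$, and crucially the trees are first replaced by \emph{full binary} representatives. Without that normalisation, for $T'\leq T$ the root of $T'$ can carry the same label $i$ as the root of $T$ yet have a different arity, and the ``push the two halves apart along coordinate $i$'' homotopy is not well defined on such $T'$. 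Normalising to full binary trees guarantees that if $T'\not\sim_1 T$, its root label is strictly less than $i$, which is exactly what lets the pushing-apart homotopy stay inside $F(T)$ and glue coherently across the different $G(T')$. Your proposal needs both of these repairs before it constitutes a proof.
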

\begin{proof}
  As remarked in~\cite{BFSV98}, the map $\varphi$ is sufficiently cofibrant, since the inclusion of a finite union of convex subspaces of $\R^N$ into a larger such union is a cofibration. By~\ref{corEquivalence}, it thus suffices to show that each $F\br{T}$ is a retract of a CW-complex, and that it is contractible. The first point is clear, as the $G\br{T}$ can be defined in terms of simple, linear inequalities. The second point will be a consequence of the following remark and lemma:

  \begin{remark}
    Any equivalence class of trees with more than one leaf can also be represented by a full binary tree, i.e.~one where all vertices have exactly two input edges (note that this representation is not unique, but it will suffice for our purposes). Further, our ordering on trees, as well as the subspaces $G$ and $F$ depend only on the equivalence class of the trees. In particular, we have \[F\br{T}= \bigcup_{T^ \prime\leq T, T^\prime \text{ is full binary}}G\br{T^\prime}\]
  \end{remark}

  For two full binary trees $T_1$ and $T_2$, as well as $i\in \N$, write $T_1\sim_t T_2$, if the first $t$ levels of $T_1$ and $T_2$ (starting from the root) coincide. More formally, let $T_1^{\br{t}}$ be the subtree of $T_1$ containing all those vertices, edges, and leaves reachable from the root by a path containing at most $t$ edges and define $T_2^{\br{t}}$ similarly, then we ask for the two to coincide.
  
  \begin{lemma}
    For every full binary tree $T$ we have that the inclusion of spaces
    \[ \bigcup_{T^\prime\leq T, T^\prime \sim_{t+1} T}G\br{T^\prime}\subseteq \bigcup_{T^\prime\leq T, T^\prime \sim_{t} T}G\br{T^\prime} \]
    where the union is taken only over full binary trees is a deformation retract.
  \end{lemma}
  \begin{proof}
    Our ordering $\leq$ of trees is preserved by grafting trees (i.e.\ if $T_1\leq T_1^\prime,\dots,T_k\leq T_k^\prime$ and $T$ is some tree, then we have $T\circ \br{T_1,\dots,T_k}\leq T\circ\br{T_1^\prime,\dots, T_k^\prime}$). Therefore, it suffices to show that $\bigcup_{T^\prime\leq T, T^\prime\sim_1 T}G\br{T^\prime}\subseteq F\br{T}$ is a retraction: if $T$ is given by grafting some trees $T_1,\dots, T_k$ onto $T^{\br{t}}$, then $\bigcup_{T^\prime\leq T,T^\prime\sim_t T}G\br{T^\prime}$ is a subspace of $F\br{T_1}\times\cdots\times F\br{T_k}\times G\br{T^{\br{t}}}$, each of the factors of this product but the last retracts to $\bigcup_{T_j^\prime\leq T_j, T_i^\prime\sim_1 T_j}G\br{T_j^\prime}$, the combined retraction stays within the subspace, and ends up in the space $\bigcup_{T^\prime\leq T,T^\prime\sim_{t+1}T}G\br{T^\prime}$ which we view as a subspace of $\bigcup_{T_1^\prime\leq T_1, T_1^\prime\sim_1 T_1}G\br{T^\prime_1}\times \cdots \times \bigcup_{T_k^\prime\leq T_k, T_k^\prime\sim_1 T_k}G\br{T_k^\prime}\times G\br{T^{\br{t}}}$.

    Now, let $T\in\calm_n\br{A}$, and let the root vertex of $T$ be labelled $i$ and have inputs $T_1,T_2$. Further, let $T^\prime\leq T$. If $T^\prime\sim_1T$, there is nothing for us to do, so suppose the root vertex of $T^\prime$ is instead labelled $j$ for some $j<i$ (here, it is important that we use full binary trees, for otherwise it might also be the case that the root vertices have the same label but a different number of inputs). Further, let $A_1, A_2$ be the labels of the leaves of $T_1, T_2$, respectively, and let $T_1^\prime,T_2^\prime$ be the inputs of the root vertex of $T^\prime$. Next, for $l\in \cb{1,2}$, let $h_l:\sbr{0,1}\times\sbr{0,1}\to\sbr{0,1}$ be some homotopy from the identity to the obvious affine map $\sbr{0,1}\to \sbr{\frac{l-1}{2}, \frac{l}{2}}$. We can then define a homotopy $h$ from the inclusion $G\br{T^\prime}\to F\br{T}$ to a map whose image ends up in $\bigcup_{\tilde{T}\leq T, \tilde{T}\sim_1 T}G\br{\tilde{T}}$ as follows:

    We let $h\br{t,c}$ be the configuration obtained by applying $h_l\br{t,\bullet}$ to the $i$-th coordinate of those cubes $c_a$ with $a\in A_l$. This is well defined, as the cubes are separated along the $j$-th coordinate, which is unchanged by $h$, and, at $t=1$, the $i$-th coordinate of the points of $c_a$ with $A_l$ are in $\sbr{\frac{l-1}{2},\frac{l}{2}}$, so we obtain a desired decomposition by cutting along the family of hyperplanes defined by their $i$-th coordinate being one of the $\frac{l}{2}$.

    Stitching those homotopies together gives the desired retraction.
  \end{proof}

  We proceed with the proof of Proposition~\ref{propEqCM}.
  
  Note that for any two trees $T_1,T_2\in \calm_n\br{A}$, we have $T_1\sim_0 T_2$. Moreover, if $h$ is the height of $T_1$, we have $T_1\sim_{h}T_2$, if and only if $T_1=T_2$. Thus, iterating the previous lemma, we see that, for $T\in \calm_n$, the space $G\br{T}$ is a deformation retract of $F\br{T}$. But $G\br{T}$ is defined in terms of linear inequalities so it is convex. Therefore, the only thing left to show is that each $G\br{T}$ is non-empty, which we want to see by induction on the number of leaves of $T$.

  For a tree $T$ with only a single leaf, this is clear. Now, suppose $T$ has a root vertex $v$ with input trees $T_1,\dots,T_k$ (in order of the planar structure) and label $l$, and suppose that $c_1,\dots, c_k$ are configurations with $c_i\in G\br{T_i}$. Further, we define $c_i^\prime$ to be the configuration given by scaling $c_i$ along the $l$-th coordinate to lie in $\sbr{\frac{i-1}{k},\frac{i}{k}}$ and set $c$ to be the union of the $c_i^\prime$, then this configuration $c$ is as required. Therefore, we see that $F\br{T}$ is also contractible which completes the proof.
\end{proof}

\begin{example}
  For the tree $T$ given by
  \[
    \begin{tikzpicture}[scale=0.5]
      \node[fill, circle, label=below right:{$2$}] (A) at (0,0) {};
      \node[label=above:$a$] (B) at (-2,2) {};
      \node[label=above:$b$] (x) at (0,2) {};
      \node[label=above:$c$] (z) at (2,2) {};
      \draw (A) -- (0,-2);
      \draw (A) -- (B);
      \draw (A) -- (x);
      \draw (A) -- (z);
    \end{tikzpicture}
  \]
  the retraction $F\br{T}\to \bigcup_{T^\prime\leq T, T^\prime\sim_1 T}G\br{T^\prime}=G\br{T}$ can be pictured as follows:
  \[
    \begin{tikzpicture}
      \draw (0,0) rectangle (3,-3);
      \draw (0,0) rectangle (1,-3);
      \draw (1,0) rectangle (2,-3);
      \draw (2,0) rectangle (3,-3);
      \node at (0.5,-1.5) {$b$};
      \node at (1.5,-1.5) {$a$};
      \node at (2.5,-1.5) {$c$};
      \node at (4,-1.5) {$\rightsquigarrow$};
      \draw (5,0) rectangle (8,-3);
      \draw (6,-2) rectangle (7,-3);
      \draw (5,-1) rectangle (6,-2);
      \draw (7,0) rectangle (8,-1);
      \draw[dotted] (5,0) rectangle (6,-3);
      \draw[dotted] (6,0) rectangle (7,-3);
      \draw[dotted] (7,0) rectangle (8,-3);
      \node at (6.5,-2.5) {$a$};
      \node at (5.5,-1.5) {$b$};
      \node at (7.5,-0.5) {$c$};
    \end{tikzpicture}
  \]
\end{example}

\subsection{Relation to complete graphs}

From the previous result, we obtain a diagram
\[
  \begin{tikzcd}
    \calc_n \ar[rd] & \cald_n \ar[l, hook'] \ar[r] \ar[d]& \calm_n\\
    & \Conf\br{n} \ar[r] & \calg_n
  \end{tikzcd}
\]
where all arrows induce equivalences. In particular, it follows that $\calm_n$ and $\calg_n$ are (abstractly) equivalent as operads. However, it would be convenient to find a way to complete this diagram with a map
\[ \mu: \calm_n\to \calg_n \]

This, we will seek to construct, as follows:

Given some tree $T\in \calm_n\br{A}$, we can construct a graph on $A$ by defining the edge between $a$ and $b$ to be directed from $a$ to $b$, if $a$ precedes $b$ in $T$ with respect to the planar structure. Further, we label this edge by $i$, if $i$ is the label on the lowest vertex of the path from $a$ to $b$ in $T$. Given the description of the order on both $\calm_n$ and $\calg_n$, it is easy to see that this is a morphism of operads.

\[
  \begin{tikzpicture}[scale=0.8]
    \node[label=above:$a$] (a) at (-3,4) {};
    \node[label=above:$b$] (b) at (-1,4) {};
    \node[fill, circle, label=below:$2$] (L) at (-2,2) {};
    \node[label=above:$c$] (c) at (1,4) {};
    \node[label=above:$d$] (d) at (3,4) {};
    \node[fill, circle, label=below:$2$] (R) at (2,2) {};
    \node[fill, circle, label=below right:$1$] (M) at (0,0) {};
    \draw (a) -- (L);
    \draw (b) -- (L);
    \draw (c) -- (R);
    \draw (d) -- (R);
    \draw (R) -- (M);
    \draw (L) -- (M);
    \draw (M) -- (0,-2);
    \node at (4,1) {$\rightsquigarrow$};
    \node[circle, fill, label=below:$a$] (a2) at (5,1) {};
    \node[circle, fill, label=below left:$b$] (b2) at (7,1) {};
    \node[circle, fill, label=below:$c$] (c2) at (9,1) {};
    \node[circle, fill, label=below right:$d$] (d2) at (11,1) {};
    \draw[-{Latex[scale=1.5]}] (a2) -- (b2) node[midway, label=above:{$2$}] {};
    \draw[-{Latex[scale=1.5]}] (b2) -- (c2) node[midway, label=above:{$1$}] {};
    \draw[-{Latex[scale=1.5]}] (c2) -- (d2) node[midway, label=above:{$2$}] {};
    \draw[-{Latex[scale=1.5]}] (a2) arc (180:0:3) node[midway,label=above:{$1$}] {}; 
    \draw[-{Latex[scale=1.5]}] (a2) arc (180:0:2) node[midway,label=above:{$1$}] {};
    \draw[-{Latex[scale=1.5]}] (b2) arc (-180:0:2) node[midway,label=below:{$1$}] {};
  \end{tikzpicture}
\]

This map does not make the above diagram commute on the nose, as the configuration of the barycentres might enjoy better separation properties than the configuration of the cubes, themselves. However, we do obtain commutativity up to homotopy (or, rather, the ordering of $\calg_n$). From that we get the following result:
\begin{cor}
  The map $\mu$ induces an equivalence.
\end{cor}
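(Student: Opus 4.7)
The plan is to fit $\abs{\mu}$ into a strictly commutative diagram whose remaining three edges are already known to be weak equivalences, and to conclude by two applications of two-out-of-three. The central input is a sharpening of the remark made just before the corollary: for every finite set $A$ and every decomposable configuration $c \in \cald_n(A)$,
\[ \psi(\gamma(c)) \leq \mu(\varphi(c)) \qquad \text{in } \calg_n(A). \]
This is a direct unpacking of the definitions. If $c$ lies in $G(T')$ for some $T' \leq \varphi(c)$ and the lowest vertex on the path from $a$ to $b$ in $T'$ carries label $l$, then by construction of $G(T')$ the cubes $c_a$ and $c_b$ are $l$-separated, so the centres $\gamma(c_a)$ and $\gamma(c_b)$ first disagree on some coordinate of index at most $l$; this is exactly the required edge-wise inequality of graphs.

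Given this, I would apply the zigzag construction of Proposition~\ref{propEquivalence} to the two horizontal lax morphisms $\varphi$ and $\psi$, producing topological posets
\[ R_\varphi = \{(c,T) : \varphi(c) \leq T\} \qquad \text{and} \qquad R_\psi = \{(c',g) : \psi(c') \leq g\}, \]
each with its canonical pair of projections. The inequality above is precisely what is needed to define a comparison map $\beta \colon R_\varphi \to R_\psi$ by $(c,T) \mapsto (\gamma(c), \mu(T))$: the condition $\psi(\gamma(c)) \leq \mu(T)$ follows from monotonicity of $\mu$ applied to $\varphi(c) \leq T$ combined with the pointwise inequality above. A direct check of the definitions then shows that $\beta$ assembles, arity by arity, into the strictly commutative diagram
\[
\begin{tikzcd}
\cald_n \ar[d, "\gamma"'] & \abs{R_\varphi} \ar[l, "\pi_\varphi"'] \ar[r, "\rho_\varphi"] \ar[d, "\abs{\beta}"] & \abs{\calm_n} \ar[d, "\abs{\mu}"] \\
\Conf_n & \abs{R_\psi} \ar[l, "\pi_\psi"'] \ar[r, "\rho_\psi"] & \abs{\calg_n}
\end{tikzcd}
\]
of symmetric sequences of spaces.

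The conclusion then follows by standard two-out-of-three. The top row consists of arity-wise weak equivalences by Proposition~\ref{propEqCM}; the bottom row by Theorem~\ref{propGisEn}; and the left vertical $\gamma$ is a weak equivalence because $\cald_n \hookrightarrow \calc_n$ is one (as noted before Proposition~\ref{propEqCM}) and $\gamma \colon \calc_n \to \Conf_n$ is one (by the remark following Definition~\ref{defConf}). Applying two-out-of-three to the left square identifies $\abs{\beta}$ as a weak equivalence; applying it to the right square then yields the same conclusion for $\abs{\mu}$, completing the proof. The only point that requires real care is the verification that $\beta$ is compatible with the $\Sigma_A$-actions, so that the arity-wise equivalences really do assemble into an equivalence of topological operads; this is immediate from the $\Sigma$-equivariance of $\gamma$ and $\mu$, so no genuine obstacle arises beyond unpacking definitions.
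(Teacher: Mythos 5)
Your argument is correct and is exactly what the paper's terse ``from that we get the following result'' is pointing at: the pointwise inequality $\psi\br{\gamma\br{c}}\leq \mu\br{\varphi\br{c}}$ in $\calg_n\br{A}$ is the ``commutativity up to the ordering of $\calg_n$'' the paper invokes, and lifting it via Proposition~\ref{propEquivalence} to a strictly commuting zigzag diagram and then applying two-out-of-three twice is the natural way to cash that in. One small point worth making explicit: the middle objects of Proposition~\ref{propEquivalence} are simplicial spaces whose $k$-simplices are chains $\varphi\br{c}\leq T_1\leq\cdots\leq T_k$, so $\beta$ should be defined levelwise by $\br{c,T_1,\dots,T_k}\mapsto \br{\gamma\br{c},\mu\br{T_1},\dots,\mu\br{T_k}}$; this is simplicial and well-defined by monotonicity of $\mu$ together with your key inequality, after which the rest of the argument goes through unchanged.
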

 We hope that there is some combinatorial way to prove this, as well. However, we have not been able to find one (a direct proof via Quillen's theorem A is not possible due to similar counterexamples as for the map $\PP_n\to \calg_n$ above, which can be seen using the next proposition).

For completeness sake, we provide a nice characterisation of the image of $\mu$ --- in particular, this gives an alternative characterisation of the operad $\calm_n$. We will, however, not use this in the remainder of the paper.

\begin{prop}
  Under $\mu$, the elements of $\calm_n$ correspond to those graphs $g\in\calg$ which are decomposable in the following way:

  For all $a,b\in g$ and any path $a\eqqcolon a_0\to a_1\to\dots\to a_k\coloneqq b$, there is an index $i$ such that all edges $a_p\to a_q$ with $p\leq i<q$ have the same weight.
\end{prop}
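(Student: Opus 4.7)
The plan is to prove the two inclusions separately, with an induction on $|A|$ doing the real work in the harder direction.

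For the forward inclusion, I would take $g = \mu(T)$ and fix a directed path $a_0 \to \cdots \to a_k$ in $g$. Because the direction of an edge in $\mu(T)$ records the planar order of the two leaves, the sequence $a_0, \dots, a_k$ appears in planar order among the leaves of $T$. Let $v$ be their lowest common ancestor, with label $l^{\ast}$; the children $c_1, \dots, c_m$ of $v$ (in planar order) partition the $a_p$ into consecutive blocks, and at least two of these blocks are nonempty because $v$ is the LCA of all of them. Picking the first switching index $i$, any pair $p \leq i < q$ has $a_p$ and $a_q$ in distinct child subtrees of $v$, so their LCA is $v$ itself and the edge $a_p \to a_q$ carries weight $l^{\ast}$, as required.

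For the converse I would induct on $|A|$, the case $|A| = 1$ being vacuous. For $|A| > 1$, acyclicity of $g$ supplies the unique linear order $a_0 \to \cdots \to a_m$ on $A$, which is itself a directed path; the hypothesis therefore yields an index $i$ and a weight $l$ so that every edge $a_p \to a_q$ with $p \leq i < q$ has weight $l$. Set $L = \{a_0, \dots, a_i\}$ and $R = \{a_{i+1}, \dots, a_m\}$. The restrictions $g|_L$ and $g|_R$ still satisfy the decomposability hypothesis, because any directed path in a restriction is a directed path in $g$. By induction there exist trees $T_L \in \calm_n(L)$ and $T_R \in \calm_n(R)$ with $\mu(T_L) = g|_L$ and $\mu(T_R) = g|_R$; grafting them as the two inputs of a fresh root labelled $l$ produces a tree $T$, and a three-case verification (both endpoints in $L$, both in $R$, or one in each, using that LCAs in $T$ agree with LCAs in $T_L$ or $T_R$ when both endpoints are on the same side) gives $\mu(T) = g$.

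The main subtlety I anticipate is handling the equivalence relation defining $\calm_n(A)$ in the inductive construction: the label $l$ at the new root may coincide with the label at the root of $T_L$ or $T_R$, in which case the contraction equivalence on $\calm_n$ must be invoked. Because $\mu$ is computed purely in terms of lowest common ancestors, and contracting an edge between two equally-labelled vertices leaves every LCA unchanged, this causes no genuine difficulty, but it is worth making explicit. A secondary point is that the index $i$ furnished by the hypothesis need not be unique, but any valid choice yields the same equivalence class in $\calm_n(A)$ by the same contraction rule.
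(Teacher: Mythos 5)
Your proof is correct. For the converse (a decomposable graph is in the image of $\mu$), your argument is essentially the same as the paper's: both induct on $\abs{A}$, extract an index $i$ from the full linear order $a_0\to\cdots\to a_m$, split into $L$ and $R$, and graft the inductively-obtained trees onto a fresh root labelled by the common cross-weight $l$. The paper packages this slightly differently — it first proves the path condition equivalent to a recursive (divide-and-conquer) decomposability condition and then inducts with that reformulation — but the underlying argument is the same, and your appeal to the contraction relation when labels collide is a subtlety the paper handles implicitly via equivalence classes.

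Where you genuinely diverge is the forward direction. The paper proves that $\mu(T)$ is decomposable by induction on the tree, assuming WLOG a binary root (justified by the equivalence relation and the remark that any class is represented by a full binary tree), and deduces the recursive condition, from which the path condition follows by the intermediate equivalence lemma. You instead give a direct, non-inductive argument: for any path $a_0\to\cdots\to a_k$ in $\mu(T)$, pass to the lowest common ancestor $v$ of $\{a_0,\dots,a_k\}$, observe that the children of $v$ partition the $a_p$ into consecutive blocks of which at least two are nonempty, and take the first switching index $i$. For $p\leq i<q$ the vertices $a_p$ and $a_q$ lie in distinct child subtrees of $v$, so their LCA is $v$ and the edge weight is the label of $v$ regardless of $p,q$. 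This is cleaner: it avoids both the intermediate equivalence lemma and the induction, and does not require the WLOG binarisation. The only cost is that you do not obtain the recursive reformulation as an explicit by-product, but since your converse argument applies the path condition directly to the full linear order and its restrictions, you never need it.
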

\begin{proof}
  First, we note that the property is equivalent to the following:

  If the underlying linear order of $g$ is $x_0\to \dots \to x_l$, then $g$ is decomposable in the sense that there exists one edge $x_i\to x_{i+1}$, such that any edge from an element $x_j$, $j\leq i$, to an element $x_k$, $k\geq i+1$, has the same label as the edge $x_i\to x_{i+1}$, and the subgraphs on the vertices $x_0,\dots,x_i$ and $x_{i+1},\dots, x_l$ are both decomposable (technically, this is a recursive definition where we have to add the base case that any graph on a single vertex is decomposable).

  To see this, first assume that $g$ is decomposable and let $a_0\to a_1\to \dots \to a_k$ be any path, then, by iterating the assumption that $g$ is decomposable, we obtain vertices $y_0\to \dots\to y_k$ such that all of the $a_i$ are among these vertices, and such that there is one edge $y_j\to y_{j+1}$ such that any edge from a vertex to the left of or equal to $y_j$ to one to the right of or equal to $y_{j+1}$ has the same weight as the edge $y_j\to y_{j+1}$. In particular, if we let $i$ be the largest index such that $a_i$ is to the left of or equal to $y_j$, then $a_{i+1}$ must be to the right of or equal to $y_{j+1}$, and the same must be true for all $a_p$ and $a_q$ with $p\leq i<q$, respectively. In particular, the edge from $a_p$ to $a_q$ must have the same weight as that $y_j\to y_{j+1}$, so $i$ is an index, as required. The converse follows by repeatedly applying the property to paths of the form $x_p\to x_{p+1}\to \dots \to x_q$ where the underlying linear order of $g$ is $x_0\to\dots\to x_l$.

  Now, we first show that $\mu\br{T}$ is decomposable for any tree $T$. We do this by induction. The base case is clear, so we may move straight to the induction step. For this, let us assume that the root vertex $v$ of $T$ has only two input edges connected to subtrees $T_1$ and $T_2$ where $T_1$ is to the left of $T_2$ in the planar structure. Now, the underlying linear order of $\mu\br{T}$ is of the form $x_0\to\dots\to x_k\to y_0\to\dots\to y_l$ where all the $x_i$ are leaves of $T_1$ and all the $y_j$ are leaves of $T_2$. We claim that the edge $x_k\to y_0$ is as required: any path from $x_i$ to $y_j$ must make use of the root vertex $v$ and (as this is the root vertex) $v$ is the lowest vertex on this path. Thus, the weight on any edge $x_i\to y_j$ must be the label of this root vertex --- in particular, the weights are all equal to that of $x_k\to y_0$, as required. Now, the induction hypothesis applied to $T_1$ and $T_2$ gives that the subgraphs on the $x_0,\dots, x_k$ and $y_0,\dots,y_l$, respectively, are decomposable, finishing the proof of the claim.

  For the converse, suppose we are given a decomposable graph $g$, then we need to find a tree $T$ with $g=\mu\br{T}$. We do this by induction on the number of vertices of $g$. Again, the base case is clear, so we will move to the induction step. Assume the underlying linear order of $g$ is $x_0\to \dots \to x_l$ and let $x_i\to x_{i+1}$ be an edge as in the definition of decomposable and let $k$ be its weight. By induction hypothesis, we find trees $T_1$ and $T_2$, such that $\mu\br{T_1}$ is equal to the subgraph of $g$ on the vertices $x_0,\dots,x_i$ and $\mu\br{T_2}$ is that on $x_{i+1},\dots,x_l$. We define $T$ to be the planar graph with a root vertex $v$ labelled $k$ having two inputs: one to the left connected to $T_1$ and one to the right connected to $T_2$. We claim that $\mu\br{T}=g$. For this, we only have to see that both agree on edges of the form $x_p\to x_q$ with $p\leq i<q$: in $\mu\br{T}$, all such edges are labelled $k$, as the path between the corresponding leaves must go through the root vertex, which is labelled $k$, but this is also their weight in $g$, as, by the definition of decomposability, their weight must be equal to that of the edge $x_i\to x_{i+1}$, which is how we defined $k$.
\end{proof}

\section{The McClure-Smith Operad}

The operad we define in this section is originally due to McClure and Smith~\cite{MS04}. 

\begin{definition}[Lattice paths]
  For a finite set $A$ and a natural number $n$, let $\call\br{A;n}$ be the set of isomorphism classes of diagrams of the form
  \[ A\overset{f}{\leftarrow} L \overset{\alpha}{\to}\sbr{n} \]
  where $L$ is a finite linear order, f is a surjective map of sets, and $\alpha$ is a map of linear orders into $\sbr{n}=\cb{0<1<\cdots <n}$. We will refer to such diagrams as {\em lattice paths}.
  Isomorphisms between two diagrams $A\overset{f}{\leftarrow} L \overset{\alpha}{\to}\sbr{n}$ and $A\overset{g}{\leftarrow} M \overset{\beta}{\to}\sbr{n}$ consist of isomorphisms of linear orders $\theta: L\to M$, such that the diagram
  \[
    \begin{tikzcd}
      L\ar[d, "f"'] \ar[r, "\alpha"] \ar[rd, iso', "\theta"] & \sbr{n}\\
      A & M \ar[l, "g"] \ar[u, "\beta"']
    \end{tikzcd}
  \]
  commutes.
\end{definition}

\begin{definition}\label{defRestr} Later on, it will be convenient to also have a restriction operation for lattice paths: if we are given some subset $U\subseteq A$, then we set
  \[ \bullet|_{U}: \call\br{A;n}\to \call\br{U;n} \]
  to be the map defined by sending some diagram $A\overset{f}{\leftarrow}L\overset{\alpha}{\to}\sbr{n}$ to the diagram \[U\xleftarrow{f|_{f^{-1}\br{U}}}f^{-1}\br{U}\xrightarrow{\alpha|_{f^{-1}\br{U}}}\sbr{n}\]
\end{definition}

\begin{remark}
  \begin{enumerate}[itemindent=*,leftmargin=0pt]
  \item For set theoretic reasons, we have to restrict this definition to linear orders $L$ coming from some convenient universe/level of the von Neumann hierarchy.
  \item If two lattice paths are isomorphic, then the isomorphism between them is unique.
  \end{enumerate}
\end{remark}

\begin{remark}
  The name lattice path comes from the following observation: if we are given a point $A\overset{f}{\leftarrow} L \overset{\alpha}{\rightarrow} \sbr{n}$ with $L=\cb{l_0<\cdots <l_k}$, and if the smallest element of $L$ in the preimage of $i$ under $\alpha$ is $l_{k\br{i}}$, then we can identify this with a subdivided string of symbols \[f\br{l_0}\dots f\br{l_{k\br{1}-1}}\vert f\br{l_{k\br{1}}}\dots f\br{l_{k\br{2}-1}}\vert\qquad \dots \qquad\vert f\br{l_{k\br{n}}}\dots f\br{l_k} \]
  Further, if $A=\cb{a_1,\dots, a_m}$, then each such string corresponds to a path through the $m$-dimensional lattice of size $\abs{f^{-1}\br{a_1}}\times\cdots\times \abs{f^{-1}\br{a_m}}$ subdivided into $n+1$ pieces: we interpret each occurrence of $a_i$ in the string of symbols as taking one step in the $i$-th direction.
  \[ ab|a|bb \quad \rightsquigarrow \quad
    \begin{tikzcd}[anchor=base, baseline=2.2cm]
      \bullet \ar[r, dotted] & \bullet \ar[r, dotted] \ar[rr, line width=2, -{to[scale=0.25]}]& \bullet \ar[r,dotted] & \bullet\\
      \bullet \ar[r, dotted] \ar[u, dotted] & \bullet \ar[r, dotted]\ar[u, dotted] \ar[u,line width=2, -{to[scale=0.25]}] & \bullet \ar[r,dotted]\ar[u, dotted] & \bullet\ar[u, dotted]\\
      \bullet \ar[r, dotted] \ar[u, dotted] \ar[ur, rounded corners=0.75cm, to path={(\tikztostart.north) |- (\tikztotarget)},line width=2, -{to[scale=0.25]}]& \bullet \ar[r, dotted]\ar[u, dotted] & \bullet \ar[r,dotted]\ar[u, dotted] & \bullet\ar[u, dotted]\\
    \end{tikzcd}
  \]

  The idea of considering these lattice paths and using them to construct the McClure-Smith operad comes from Batanin and Berger~\cite{BB09}.
\end{remark}

\begin{remark}
  $\call\br{A;\bullet}$ has an obvious cosimplicial structure. It also has the structure of an $A$-fold simplicial object, for which we will need some additional notation.  
\end{remark}

\begin{definition}
  For $p: A\to \N$, we let $\call\br{A;n}_p$ be the set of diagrams $A\overset{f}{\leftarrow} L \overset{\alpha}{\rightarrow}\sbr{n}$  such that $f^ {-1}\br{a}\cong \sbr{p\br{a}}$ for each $a\in A$ as a linear order. As this isomorphism is unique, we will identify $f^ {-1}\br{a}$ with $\sbr{p\br{a}}$. Of course every element of $\call\br{A;n}$ lies in $\call\br{A;n}_p$ for some $p$.

  Further, suppose we are given some $a\in A$, as well as $m\in \N$, then we will write $\call\br{A;n}_{a\mapsto m}$ for the union of the $\call\br{A;n}_p$ taken over all $p$ with $p\br{a}=m$ --- this is just the set of those diagrams $A\overset{f}{\leftarrow} L \overset{\alpha}{\to}\sbr{n}$ with $f^{-1}\br{a}\cong \sbr{m}$. We will refer to its elements as {\em $m$-simplices in the $a$-direction}. Now, if we are given a map $\rho:\sbr{k}\to \sbr{m}$, then we can define a map
  \[ \rho^\ast: \call\br{A;n}_{a\mapsto m}\to \call\br{A;n}_{a\mapsto k} \]
  as follows:

  given some diagram $A\overset{f}{\leftarrow}L\overset{\alpha}{\to} \sbr{n}$ in $\call\br{A;n}_p$ with $p\br{a}=m$ (here, we will view $\sbr{m}$ as a subset of $L$ via the isomorphism $\sbr{m}\cong f^{-1}\br{a}$), we define a new diagram \[\rho^\ast\br{A\overset{f}{\leftarrow}L\overset{\alpha}{\to} \sbr{n}}=A\xleftarrow{f\sbr{f\rho/\sbr{m}}}L\sbr{\;\sbr{k}/\sbr{m}}\xrightarrow{\alpha\sbr{\alpha\rho/\sbr{m}}}\sbr{n}\] given by replacing $\sbr{m}$ by $\sbr{k}$ according to $\rho$. More precisely, $L\sbr{\;\sbr{k}/\sbr{m}}$ has as its underlying set the set $L\setminus \sbr{m}\amalg \sbr{k}$, ordered by
  \[
    x\leq y\colonLeftrightarrow
    \begin{cases}
      x\leq y,& x,y\in L\setminus \sbr{m}\\
      x\leq y,& x,y\in \sbr{k}\\
      x\leq \rho\br{y},& x\in L\setminus\sbr{m}, y\in \sbr{k}\\
      \rho\br{x}\leq y,& x\in \sbr{k}, y\in L\setminus\sbr{m}
    \end{cases}
  \]
  and, further, the maps $f\sbr{f\rho/\sbr{m}}$ and $\alpha\sbr{\alpha\rho/\sbr{m}}$ are defined via
  \begin{align*}
    &f\sbr{f\rho/\sbr{m}}\br{x}=f\br{x},& \alpha\sbr{\alpha\rho/\sbr{m}}\br{x}=\alpha\br{x}
  \end{align*}
   for $x\in L\setminus\sbr{m}$ and \begin{align*}&f\sbr{f\rho/\sbr{m}}\br{y}=f\br{\rho\br{y}},&\alpha\sbr{\alpha\rho/\sbr{m}}\br{y}=\alpha\br{\rho\br{y}}\end{align*} for $y\in \sbr{k}$.

  This gives the sets $\br{\call\br{A;n}_{a\mapsto m}}_{m\in \N}$ the structure of a simplicial set, which we will denote by $\call\br{A;n}_{a\mapsto\bullet}$ and refer to as the {\em simplicial set in the $a$-direction}.
\end{definition}

\begin{remark}
  A diagram of the form $A\leftarrow L \rightarrow \sbr{n}\in \call\br{A;n}_{a\mapsto m}$ is a face (more precisely: iterated composition of faces) of another diagram $A\leftarrow M \rightarrow \sbr{n}$ in the $a$-direction, if there is a commutative diagram of the form
  \[
    \begin{tikzcd}
      L\ar[d, "f"'] \ar[r, "\alpha"] \ar[rd, hook, "\theta"] & \sbr{n}\\
      A & M \ar[l, "g"] \ar[u, "\beta"']
    \end{tikzcd}
  \]
  with $\theta$ injective and an isomorphism outside of $f^{-1}\br{a}$. Further, if there exists a diagram $A\leftarrow M \rightarrow \sbr{n}$ and a commutative diagram of the form
  \[
    \begin{tikzcd}
      L\ar[d, "f"'] \ar[r, "\alpha"] \ar[rd, two heads, "\theta"] & \sbr{n}\\
      A & M \ar[l, "g"] \ar[u, "\beta"']
    \end{tikzcd}
  \]
  with $\theta$ surjective and an isomorphism outside of $f^{-1}\br{a}$, then $A\leftarrow L \rightarrow \sbr{n}$ is degenerate in the $a$-direction.
  
\end{remark}

We now begin to use this multi-simplicial and cosimplicial structure to construct an operad in spaces from the $\call\br{A;n}$. For this, some of the details of the constructions can be found in the appendix.

\begin{definition}
  By geometrically realising $\call\br{A;n}$ in all simplicial directions (i.e.\ $\call\br{A;n}_{a\mapsto \bullet}$ for all $a$), we obtain a space that we will denote by $\abs{\call\br{A;n}}$ and, taken together for all $n\in \N$, this gives a cosimplicial space $\abs{\call\br{A;\bullet}}$. We can further take the {\em totalisation\/} of this cosimplicial space to obtain a single, topological space
  \[ \Tot{\abs{\call\br{A;\bullet}}}\coloneqq \Top^\Delta\br{\abs{\Delta^\bullet}, \abs{\call\br{A;\bullet}}} \]
  Suggestively, we will already denote these spaces by $\MS\br{A}$.

\end{definition}

\begin{remark}
  A point in the geometric realisation $\abs{\call\br{A;n}}$ is represented by a pair \[\br{A\overset{f}{\leftarrow}L\overset{\alpha}{\to}\sbr{n}}\otimes \br{t\br{a}}_{a\in A}\] where $t\br{a}\in \Delta^{\abs{f^{-1}\br{a}}-1}$. We will often identify the $\br{t\br{a}}_{a\in A}$ with a family $\br{t_l}_{l\in L}$ with $0\leq t_l\leq 1$ such that if $f^{-1}\br{a}=\cb{l_0,\dots,l_k}$ and $l_0<\cdots<l_k$, then $t\br{a}=\br{t_{l_0},\dots,t_{l_k}}$. In particular, we have $\sum_{i=0}^k t_{l_i}=1$.
  This representation is unique, if we take $A\leftarrow L\rightarrow \sbr{n}$ to be non-degenerate and each $t\br{a}$ to be an interior point.  If $A\leftarrow L\to \sbr{n}$ is degenerate, i.e.\ we have a diagram
  \[
    \begin{tikzcd}
      L\ar[d, "f"'] \ar[r, "\alpha"] \ar[rd, two heads, "\theta"] & \sbr{n}\\
      A & M \ar[l, "g"] \ar[u, "\beta"']
    \end{tikzcd}
  \]
  then the point $\br{A\leftarrow L \to \sbr{n}}\otimes \br{t_l}_{l\in L}$ is the same as the point $\br{A\leftarrow M \to \sbr{n}}\otimes \br{s_m}_{m\in M}$ with $s_m\coloneqq \sum_{l: \theta\br{l}=m}t_l$.
\end{remark}

\begin{definition}\label{defSubstitution}
  We can define a substitution operation for lattice paths. If we are given two elements $A\overset{f}{\leftarrow}L\overset{\alpha}{\to}\sbr{n}$ and $B\overset{g}{\leftarrow}M\overset{\beta}{\to}\sbr{m}$, and if $\sbr{m}\cong f^{-1}\br{a}$ for some $a\in A$, then we can replace each $l\in f^{-1}\br{a}$ by the corresponding fibre of $\beta$ to obtain a new linear order $L\circ_a M$. Further, there are maps
  \[
    \begin{tikzcd}
      L\circ_a M \ar[d, "f\circ_a g"'] \ar[r] & L \ar[r] \ar[d] & \sbr{n} \\
      A\sbr{B/a}\ar[r] & A &
    \end{tikzcd}
  \]
  making $A\sbr{B/a}\leftarrow L\circ_a M \to \sbr{n}$ an element of $\call\br{A\sbr{B/a};\sbr{n}}$. In the cosimplicial structure, this element has the same degree as $A\leftarrow L\rightarrow \sbr{n}$. In the simplicial structure in the $x$-direction for $x\in A\setminus\cb{a}$, it has the same degree as $A\leftarrow L\rightarrow\sbr{n}$, and in the $b$-direction for $b\in B$, it has the same degree as $B\leftarrow M\rightarrow \sbr{m}$.
\end{definition}

\begin{remark}\label{remSubstitution}
  Let $A\overset{f}{\leftarrow}L\overset{\alpha}{\to}\sbr{n}$, $a\in A$, and $B\overset{g}{\leftarrow}M\overset{\beta}{\to}\sbr{m}$ with $\sbr{m}\cong f^{-1}\br{a}$ be as above, and assume we are given a map $\rho: \sbr{m^\prime}\to \sbr{m}$ in $\Delta$. The compositions
  \[\rho^{\ast}\br{A\overset{f}{\leftarrow}L\overset{\alpha}{\to}\sbr{n}}\circ_a \br{B\overset{g}{\leftarrow}M\overset{\beta}{\to}\sbr{m^\prime}} \text{ and } \br{A\overset{f}{\leftarrow}L\overset{\alpha}{\to}\sbr{n}}\circ_a \rho_\ast\br{B\overset{g}{\leftarrow}M\overset{\beta}{\to}\sbr{m^\prime}} \]
  where $\rho^\ast$ is the simplicial, and $\rho_\ast$ the cosimplicial action, coincide.
\end{remark}

\begin{remark}
  The substitution operations induce maps
  \[ \circ_a: \MS\br{A}\times \MS\br{B}\to \MS\br{A\sbr{B/a}} \]
  Given maps $\varphi: \Delta^\bullet\to \abs{\call\br{A;\bullet}}$ and $\psi: \Delta^\bullet\to \abs{\call\br{B;\bullet}}$, we can construct a map $\varphi\circ_a\psi: \Delta^\bullet \to \abs{\call\br{A\sbr{B/a}}}$ as follows: suppose $t=\br{t_0,\dots,t_n}\in \Delta^n$ and $\varphi\br{t}$ is represented by $\br{A\overset{f}{\leftarrow} L\to\sbr{n}}\otimes \br{s_l}_{l\in L}$. Then in particular $s_a=\br{s_l}_{l\in f^{-1}\br{a}}$ is a point in $\Delta^m$ where $\sbr{m}\cong f^{-1}\br{a}$, so suppose $\psi\br{s_a}$ is represented by $\br{B\leftarrow M\rightarrow \sbr{m}}\otimes \br{r_x}_{x\in M}$. Then we let $\br{\varphi\circ_a\psi}\br{t}$ be represented by
  \[ \br{\;\br{A\leftarrow L\to \sbr{n}}\circ_a \br{B\leftarrow M\to \sbr{m}}}\otimes \br{s\circ_a r} \] with $s\circ_a r$ defined by $\br{s\circ_a r}_l=s_l$ for $l\in L\setminus\cb{a}$ and $\br{s\circ_a r}_{x}=r_x$ for $x\in M$. This makes the $\MS\br{A}$ into the McClure-Smith operad introduced in~\cite{MS04}.
\end{remark}

\begin{definition}\label{defWeightLP}
  For a lattice path $x=\br{A\overset{f}{\leftarrow}L\rightarrow \sbr{n}}$, as well as $a\neq b\in A$, we define the {\em weight of $x$ between $a$ and $b$}, $w_x\br{a,b}$, as follows: let $L|_{\cb{a,b}}\coloneqq f^{-1}\br{a}\cup f^{-1}\br{b}$ and define an equivalence relation $\sim$ on $L|_{\cb{a,b}}$ via $l\sim l^\prime$, for $l<l^\prime$, if $f$ is constant on $\cb{m\in L|_{\cb{a,b}}\;\vert\;l\leq m\leq l^\prime}$ and set $w_x\br{a,b}\coloneqq \abs{L|_{\cb{a,b}}/\sim}-1$. In other words, if we imagine $x$ to be represented by a (subdivided) string of symbols in $A$, and let $x^\prime$ be the string obtained from $x$ by deleting all symbols other than $a$ and $b$, then $w_x\br{a,b}$ is the number of times that the string $x^\prime$ switches between $a$ and $b$.

  Further, we define the {\em weight $w\br{x}$ of x\/} to be the maximum of the weights between any $a$ and $b$, $w\br{x}\coloneqq \underset{a,b}{\max}\;w_x\br{a,b}$.

  For each finite set $A$ and each $n,m \in \N$ this allows us to define a subset
  \[ \call_m\br{A;n}\coloneqq \cb{x\in \call\br{A;n}\;\vert\; w\br{x}\leq m} \]
  These sets are clearly compatible with the cosimplicial operations (the maps $\alpha: L\to M$ do not even appear in the definition of the weights) and the simplicial operations (they can only decrease the number of equivalence classes of $L|_{\cb{a,b}}$) and so we get subspaces
  \[ \MS_m\br{A}=\Top^\Delta\br{\Delta^\bullet,\abs{\call_m\br{A;\bullet}}}\subseteq \MS\br{A} \]
  It is, easy to verify that these subspaces are closed under the substitution operations. Thus, this defines a suboperad $\MS_m$ of $\MS$.
\end{definition}

\section{The McClure-Smith Operad and Complete Graphs}\label{secMS}
In this section, we wish to relate the McClure-Smith operad to the complete graphs operad. Before we do so, we need to study some technical details about the geometric realisation and totalisation.

The following lemma is due to McClure and Smith~\cite{MS04}.

\begin{lemma}\label{lemLPCosimplicial}
  For each finite set $A$, there is an isomorphism of cosimplicial spaces 
  \[ \abs{\call\br{A;\bullet}}\cong \abs{\call\br{A;0}}\times \Delta^\bullet \]
  i.e.\ for each $n$, we have homeomorphisms
  \[ \abs{\call\br{A;n}}\cong \abs{\call\br{A;0}}\times \Delta^n \]
  compatible with the cosimplicial structure.
\end{lemma}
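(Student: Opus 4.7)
The plan is to construct an explicit continuous bijection
\[
  \Phi_n \colon \abs{\call\br{A;n}} \longrightarrow \abs{\call\br{A;0}} \times \Delta^n
\]
and verify that the resulting family $\Phi_\bullet$ is natural in $\sbr{n}$. A point of the left-hand side is represented, in the notation introduced just after the totalisation, by $\br{A\overset{f}{\leftarrow}L\overset{\alpha}{\to}\sbr{n}}\otimes\br{t_l}_{l\in L}$ with $\sum_{l\in f^{-1}\br{a}}t_l=1$ for every $a\in A$. I would set
\[
  \Phi_n\br{\br{A\overset{f}{\leftarrow}L\overset{\alpha}{\to}\sbr{n}}\otimes\br{t_l}} \coloneqq \br{\br{A\overset{f}{\leftarrow}L}\otimes\br{t_l},\; \br{s_0,\dots,s_n}},
\]
where $s_i\coloneqq \frac{1}{\abs{A}}\sum_{l\in\alpha^{-1}\br{i}}t_l$. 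Since $\sum_l t_l = \abs{A}$ we have $\sum_i s_i=1$, so $\br{s_0,\dots,s_n}\in\Delta^n$. The map is well defined because the simplicial $a$-direction degeneracies only insert or remove elements with $t_l=0$, which affect both coordinates consistently, and $\alpha$ plays no role in the first coordinate.

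The inverse comes from a cutting construction. Given $\br{\br{A\overset{f}{\leftarrow}L}\otimes\br{t_l},\br{s_0,\dots,s_n}}$, picture $L$ laid out along the interval $\sbr{0,\abs{A}}$ with each $l$ occupying a subinterval of length $t_l$ labelled by $f\br{l}$. The barycentric coordinates produce cut positions $c_i\coloneqq \abs{A}\br{s_0+\cdots+s_{i-1}}$ for $i=1,\dots,n$. I would refine $L$ to a linear order $L'$ by splitting each element whose subinterval meets some $c_i$ in its interior into the corresponding consecutive pieces, keeping the label $f\br{l}$ on each and distributing the weight $t_l$ proportionally, and define $\alpha'\colon L'\to\sbr{n}$ by sending each piece to the index of the segment of $\sbr{0,\abs{A}}$ in which it lies.

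A direct check on non-degenerate representatives shows the two maps are mutually inverse: in $\Phi_n$ followed by the cutting, the cuts produced from the weights of $\alpha^{-1}\br{i}$ fall exactly on existing element boundaries and recover $\alpha$; going the other way, the $s_i$ recomputed from $\alpha'$ equal the prescribed segment lengths. Naturality in $\sbr{n}$ is immediate: for $\rho\colon\sbr{n}\to\sbr{m}$ the cosimplicial action replaces $\alpha$ by $\rho\alpha$, so the new coordinates are
\[
  s_j' \;=\; \frac{1}{\abs{A}}\sum_{l\in\br{\rho\alpha}^{-1}\br{j}}t_l \;=\; \sum_{i\colon\rho\br{i}=j}s_i,
\]
which is precisely the standard cosimplicial structure on $\Delta^\bullet$, while the first coordinate is untouched.

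The main subtlety I expect concerns the cutting construction when a $c_i$ lands on a boundary between elements of $L$, or when several $c_i$ coincide: in those cases the refinement $\br{L',\alpha'}$ is ambiguous only up to elements of weight zero, which correspond to simplicial degeneracies in the $a$-directions and hence represent the same point in $\abs{\call\br{A;n}}$. Continuity of both $\Phi_n$ and its inverse is then clear from their piecewise-linear form on the finite stratification of each realisation by the combinatorial type of $L$ together with the cut pattern.
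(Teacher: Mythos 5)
Your construction is essentially the paper's own proof: you produce the forward map by summing the weights over the fibres of $\alpha$, and the inverse by cutting the interval $\sbr{0,\abs{A}}$ at the running partial sums and splitting whichever elements of $L$ the cuts land inside, which is exactly the paper's splitting of each $l_i$ into $l_i^-$ and $l_i^+$. The only cosmetic difference is that the paper works with the rescaled simplex $\tilde{\Delta}^n=\cb{\br{s_0,\dots,s_n}\mid\sum s_i=\abs{A}}$ whereas you normalise by $\frac{1}{\abs{A}}$ to stay in $\Delta^n$.
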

\begin{proof}
  For a fixed $A$, the simplex $\Delta^n$ is homeomorphic to the blown up simplex \[\tilde{\Delta}^n\coloneqq \cb{\br{s_0,\dots,s_n}\;\Bigg\vert\; \sum_i s_i=\abs{A}}\] and it will be more convenient to work with these spaces, instead.

  Recall that a point of $\abs{\call\br{A;n}}$ is given by an equivalence class of objects of the form $\br{A\overset{f}{\leftarrow}L\overset{\alpha}{\to}\sbr{n}}\otimes \br{t_l}_{l\in L}$. Now, we can obtain a point in $\tilde{\Delta}^n$ from the $t_l$ by summing over the fibres of $\alpha$. More precisely, we set $s_i\coloneqq \sum_{l\in\alpha^{-1}\br{i}}t_l$ to get the point $\br{s_0,\dots,s_n}$. This defines a map
  \begin{align*}
    &\abs{\call\br{A;n}}\to \abs{\call\br{A;0}}\times \tilde{\Delta}^n\\
    &\br{A\overset{f}{\leftarrow}L\overset{\alpha}{\to}\sbr{n}}\otimes \br{t_l}_{l\in L}\mapsto \br{\br{A\overset{f}{\leftarrow}L\to\sbr{0}}\otimes \br{t_l}_{l\in L}, \br{s_i}_{i\in\sbr{n}}}
  \end{align*}
  To see that this defines a homeomorphism, we wish to construct an inverse.
  
  Suppose we are given a point $\br{\br{A\overset{f}{\leftarrow}L\to\sbr{0}}\otimes \br{t_l}_{l\in L}, \br{s_i}_{i\in\sbr{n}}}$. We wish to interpret the $\br{s_i}_{i\in \sbr{n}}$ as encoding a map $L\to\sbr{n}$. That is, we would like to define a map $\alpha: L\to \sbr{n}$, such that $s_i=\sum_{l\in \alpha^{-1}\br{i}}t_l$. Unfortunately, this is not always possible --- however, we can find a point in the same equivalence class for which this works: as $\sum_l t_l=\abs{A}=\sum_{i}s_i$, there will be unique $l_i\in L$, such that
  \[ \sum_{l<l_i}t_l<\sum_{j\leq i}s_i\leq \sum_{l\leq l_i}t_l \]
  for each $i\in \sbr{n}$.

  Now, let $L^\prime$ be the linear order on the set \[L\setminus\cb{l_i\;\vert\; i\in \sbr{n}}\cup \cb{l^-_i\;\vert\; i\in \sbr{n}}\cup\cb{l^+_i\;\vert\; i\in \sbr{n}}\]
  ordered via
  \[ l<m\colonLeftrightarrow
    \begin{cases}
      l<m, & l,m\in L\\
      l<l_i, & l\in L, m=l^+_i\text{ or }m=l^-_i\\
      l_i<m, & m\in L, l=l^+_i\text{ or }l=l^-_i\\
      l_i<l_j, & l=l_i^+\text{ or } l=l_i^-\text{ and } m=l_j^+\text{ or }m=l_j^-\\
      l=l_i^-\text{ and }m=l_i^+
    \end{cases}
  \]
  In other words, we split each $l_i$ into two elements, $l_i^{-}$ and $l_i^+$ with $l_i^-<l_i^+$ and leave the ordering between all other elements unchanged.
  There is an obvious map $\theta: L^\prime\to L$ sending $l_i^-$ and $l_i^+$ to $l_i$ and any other elements $l$ of $L^\prime$ to $l\in L$, and we set $f^\prime\coloneqq f\circ\theta$. Further, we can define a point $\br{t^\prime_l}_{l\in L^\prime}$ via $t^\prime_l\coloneqq t_l$ for $l\notin \cb{l_i^-,l_i^+\;\vert\; i\in \sbr{n}}$,  $t^\prime_{l_i^-}\coloneqq \sum_{j\leq i}s_j-\sum_{l<l_i}t_l$, and $t^\prime_{l_i^+}\coloneqq \sum_{l\leq l_i}t_l-\sum_{j\leq i}s_j$. Then, $\br{A\leftarrow L^\prime\rightarrow \sbr{0}}\otimes \br{t_l^\prime}_{l\in L^\prime}$ represents the same point as $\br{A\leftarrow L\rightarrow \sbr{0}}\otimes \br{t_l}_{l\in L}$ as is witnessed by the diagram
  \[
    \begin{tikzcd}
      L^\prime \ar[rd, two heads, "\theta"] \ar[d,"f^\prime"'] \ar[r] & \sbr{0} \\
      A & L\ar[l,"f"'] \ar[u]
    \end{tikzcd}
  \]
  Now, if we set $\alpha: L^\prime\to \sbr{n}$ to be the map $\alpha\br{l}=i$ if $\sum_{j<i}s_j<\sum_{m\leq l}t_m\leq \sum_{j\leq i}s_j$, then we have $s_i=\sum_{l\in \alpha^{-1}\br{i}}t^\prime_l$ and so $\br{A\overset{f^\prime}{\leftarrow} L^\prime \overset{\alpha}{\rightarrow}\sbr{n}}\otimes \br{t^\prime_l}_{l\in L^\prime}$ is the desired preimage.
\end{proof}

\begin{definition}
  Recall the definition of the weights $w_x\br{a,b}$ from Definition~\ref{defWeightLP}. We can use these to define a morphism
  \[
    \gamma: \call_m\br{A;n}\to \calg_m\br{A}
  \]
  by sending a lattice path $x=\br{A\overset{f}{\leftarrow}L\overset{\alpha}{\to}\sbr{n}}$ to the graph on $A$ having an edge $a\overset{i}{\to}b$, if the smallest $l$ with $f\br{l}=a$ is smaller than the smallest $m$ with $f\br{m}=b$ and if $w_x\br{a,b}=i$ --- if we think of $x$ as a string of symbols in $A$, the first part simply asks that $a$ occurs before $b$ does.

  We already observed that the definition of the weights is compatible with the simplicial and cosimplicial structure, and so $\gamma$ induces a map
  \[ \gamma: \MS_m\br{A}\to\calg_m\br{A} \]
\end{definition}

\begin{remark}
  $\gamma$ defines a lax morphism of operads: if we have some $a\in A$ and composable lattice paths $A\overset{f}{\leftarrow}L\overset{\alpha}{\to}\sbr{n}$ and $B\overset{g}{\leftarrow}M\overset{\beta}\to\sbr{m}$, and if $x,y\in A\setminus\cb{a}$, then the weights between $x$ and $y$ in the composition is equal to that in the first lattice path, as $L\circ_a M|_{\cb{x,y}}$ depends only on $L$. Similarly, if $x,y\in B$, then the weight between $x$ and $y$ in the composition is equal to that in the second lattice path, as $L\circ_a M|_{\cb{x,y}}$ depends only on $M$. Finally, for $x\in A\setminus\cb{a}$ and $y\in B$, the weight between $x$ and $y$ in the composition is at most that between $x$ and $a$ in the first lattice path, as there is an obvious injection from the equivalence classes of $L\circ_a M|_{\cb{x,y}}$ to those of $L|_{\cb{x,a}}$. Further, the only way that $x$ can occur before $y$ in the composition, if $a$ occurs before $x$ in the first lattice path, is if this injection is {\em not\/} also a bijection --- in other words, the direction of the edge between $x$ and $y$ in $\gamma$ applied to the composition can only differ from that between $x$ and $a$ in $\gamma$ applied to the first lattice path, if the weight between the former is strictly smaller than the weight between the latter.
\end{remark}

\begin{prop}
  For each $g\in \calg_n\br{A}$, the space $\abs{\call_g\br{A;n}}\coloneqq\abs{\gamma^{-1}\br{\down g}}$ is contractible.
\end{prop}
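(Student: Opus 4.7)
The plan is first to reduce to the cosimplicial level $n=0$ and then to prove contractibility of $\abs{\call_g\br{A;0}}$ by induction on $\abs{A}$. Lemma~\ref{lemLPCosimplicial} furnishes a homeomorphism $\abs{\call\br{A;n}} \cong \abs{\call\br{A;0}} \times \Delta^n$ compatible with the cosimplicial structure. Since $\gamma$ depends only on the surjection $f\colon L\to A$ and the order on $L$, the condition $\gamma\br{x}\leq g$ does not involve $\alpha\colon L\to \sbr{n}$, so this homeomorphism restricts to $\abs{\call_g\br{A;n}} \cong \abs{\call_g\br{A;0}} \times \Delta^n$. As $\Delta^n$ is contractible, the problem reduces to showing $\abs{\call_g\br{A;0}}$ is contractible.

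For the induction on $\abs{A}$, the case $\abs{A}=1$ is trivial. Otherwise let $a_1$ denote the minimum element of $A$ in the linear order induced by $g$, let $g'$ be the restriction of $g$ to $A\setminus\cb{a_1}$, and let $K_1\subseteq \call_g\br{A;0}$ be the sub-multi-simplicial set of lattice paths of the form $a_1\cdot w'$, i.e., those in which $a_1$ occurs exactly once and at the very beginning of the word. The assignment $w'\mapsto a_1\cdot w'$ yields an isomorphism $\call_{g'}\br{A\setminus\cb{a_1};0}\cong K_1$ of multi-simplicial sets: on pairs $\br{a_1,a_j}$ the restriction of $a_1\cdot w'$ consists of a single $a_1$ followed by the $a_j$'s of $w'$, giving one switch in the direction prescribed by $g$ (hence automatically dominated), while on pairs $\br{a_i,a_j}$ with $i,j>1$ the restriction is unchanged. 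Therefore $\abs{K_1}$ is contractible by the induction hypothesis.

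The key combinatorial input is that for every $w\in \call_g\br{A;0}$ the prepended word $a_1\cdot w$ still lies in $\call_g\br{A;0}$. For pairs $\br{a_i,a_j}$ with $i,j>1$ this is immediate, as the restriction of the word to $\cb{a_i,a_j}$ is unaffected by prepending $a_1$. For a pair $\br{a_1,a_j}$, prepending $a_1$ forces the edge in $\gamma\br{a_1\cdot w}$ to point from $a_1$ to $a_j$, matching $g$'s direction; the switch count between $a_1$ and $a_j$ increases by at most one, and does so precisely when the restriction of $w$ to $\cb{a_1,a_j}$ begins with $a_j$. But in that case the hypothesis $\gamma\br{w}\leq g$ provides the \emph{strict} inequality $w_w\br{a_1,a_j}<w_g\br{a_1,a_j}$, so the new weight $w_w\br{a_1,a_j}+1$ still satisfies the required bound $\leq w_g\br{a_1,a_j}$.

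Granting this, the deformation retraction of $\abs{\call_g\br{A;0}}$ onto $\abs{K_1}$ is constructed cell by cell: for a point $p$ in a non-degenerate cell $w$, view $p$ as lying on the $d_0^{a_1}$-face of the cell $a_1\cdot w$ (which belongs to $\call_g\br{A;0}$ by the combinatorial lemma) and slide it linearly through that cell to the opposite face, where the newly prepended $a_1$ carries all of the $a_1$-weight and the original $a_1$'s are collapsed; the endpoint lies in $K_1$. On cells $w\in K_1$ the enlarged cell $a_1\cdot w$ is degenerate and the slide is trivial; on a face shared by two cells $w_1,w_2$ the expanded cells $a_1\cdot w_1$ and $a_1\cdot w_2$ share the common face $a_1\cdot\br{\text{shared face}}$, so the cell-wise slidings agree there, giving a well-defined continuous deformation retraction. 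The main obstacle is verifying the combinatorial lemma and checking the continuity of the slidings across cell boundaries; once these are handled, $\abs{\call_g\br{A;0}}$ deformation retracts onto $\abs{K_1}$, which is contractible by induction, completing the proof.
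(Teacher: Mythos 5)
Your proposal follows essentially the same route as the paper: reduction to $n=0$ via Lemma~\ref{lemLPCosimplicial}, induction on $\abs{A}$, and the key combinatorial lemma that prepending the $g$-minimal element $a_1$ to a lattice path $w$ with $\gamma(w)\leq g$ preserves the inequality (your verification of this, including the role of the strict inequality when $w$ has $a_j$ before $a_1$, is exactly the paper's argument for why $\sigma$ is well-defined). Where you differ is only in packaging the retraction: the paper fixes the restriction $y$ of a lattice path to $A\setminus\cb{a_0}$, and shows the fiber $X_y$ in the $a_0$-direction is contractible by exhibiting a simplicial section of $\pi\colon v/X_y\to X_y$ (with $v$ the vertex where $a_0$ is first), deducing that $X_y$ is a retract of the contractible $v/X_y$; you instead build an explicit cell-by-cell ``sliding'' deformation retraction of the whole multi-simplicial realisation onto $\abs{K_1}$. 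The two are the same idea in different clothing. Two small points you should tighten: (i) $K_1$ as you define it (exactly one $a_1$, at the start) is not closed under degeneracies in the $a_1$-direction, so is not literally a sub-multi-simplicial set; you want the sub-object it generates (all $a_1^k\cdot w'$, $k\geq 1$), whose realisation is indeed $\abs{\call_{g'}(A\setminus\cb{a_1};0)}$ since it is simplicially constant in the $a_1$-direction. (ii) You flag but do not carry out the continuity and gluing check for the slides across cell boundaries; the paper's approach via $v/X_y$ and the section $\sigma$ is designed precisely to avoid this bookkeeping, and you would do well either to adopt it or to verify explicitly that the family $(s,t)\mapsto (s,(1-s)t_0,\dots,(1-s)t_m)$ in the $a_1$-coordinate is compatible with all face and degeneracy maps.
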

\begin{proof}
  First, note that it is non-empty: if the underlying linear order of $g$ is $a_0<\cdots <a_k$, then the string $a_0a_1\dots a_k\in \call_n\br{A;0}$, together with some coordinates, gives a point of it.

  Now, by~\ref{lemLPCosimplicial}, it suffices to see that the preimage of $g$ in $\abs{\call_n\br{A;0}}$ is contractible. For convenience, we will drop the maps $L\to\sbr{0}$ when denoting elements of $\call_g\br{A;0}$. We will proceed by induction on $\abs{A}$. If $A=\cb{a}$, then the claim is obvious. Now, for the induction step, let us fix $g$ and let $a_0$ be the smallest element of $A$ according to the underlying order of $g$. First of all, we note that it suffices to show that the simplicial set obtained from $\call_g\br{A;0}$ by fixing the simplicial degree in all directions other than $a_0$ is contractible, that is, the simplicial set
  \[X_y\coloneqq\cb{x=\br{A\overset{f}{\leftarrow}L}\in \call_g\br{A;0}\;\Big\vert\; x|_{A\setminus\cb{a_0}}=y }\]
  for some fixed $y\in \call\br{A\setminus\cb{a_0};0}$ with $\gamma\br{y}\leq g|_{A\setminus\cb{a_0}}$. To see this, note that this will give us that the space $\abs{\call_g\br{A;0}}$ is homotopy equivalent to the space $\abs{\call_{g|_{A\setminus\cb{a_0}}}\br{A\setminus\cb{a_0};0}}$ and we may proceed with the induction hypothesis.

  Let us fix one such diagram $y=A\overset{f}{\leftarrow}L\in \call\br{A\setminus\cb{a_0};0}$. A $k$-simplex $M\overset{e}{\to}A$ of $X_y$ gives rise to, and can be identified with, a pullback diagram
  \[
    \begin{tikzcd}
      L\ar[r, hook] \ar[d,"f"'] & M \ar[d,"e"] \\
      A\setminus\cb{a_0} \ar[r, hook] & A
    \end{tikzcd}
  \]
  such that $e^{-1}\br{a_0}\cong\sbr{k}$ and $\gamma\br{e}\leq g$. We can identify the map $e$ with a monotone map $\tilde{e}:\sbr{k}\to \cb{-\infty}\amalg L$ denoting that the $\br{i+1}$-st element of $e^{-1}\br{a_0}$ in $M$ occurs just after $\tilde{e}\br{i}$. More precisely, we set \[\tilde{e}\br{i}\coloneqq \max\cb{l\in L\;\vert\; \abs{e^{-1}\br{a_0}\cap \cb{m\in M\;\vert\; m\leq l}}\leq i}\] where we use the convention that $\max\emptyset=-\infty$. The inverse of this operation is given by setting $M\coloneqq L\amalg\sbr{k}$ using the linear ordering with $l\leq m$, for $l\in L$ and $m\in \sbr{k}$, if $l\leq\tilde{e}\br{m}$.
  The simplicial set $X_y$ has a canonical vertex $v$ given by the map $\tilde{e}:\sbr{0}\to \cb{-\infty}\amalg L$ with $\tilde{e}\br{0}=-\infty$. Again (as in the proof of Lemma~\ref{lemBEDownContractible}), we consider the simplicial set $v/X_y$ whose $k$-simplices are given by
  \[ \br{v/X_y}_k\coloneqq \cb{x\in \sbr{X_y}_{k+1}\;\vert\; x_0=v} \]
  This simplicial set is clearly contractible. Thus, it suffices to see that the projection $\pi: v/X_y\to X_y$ sending a $k$-simplex $x$ (i.e.\ a certain $k+1$-simplex of $X_y$) to the simplex given by forgetting the first vertex has some section $\sigma$. Indeed, if this is the case, then $X_y$ is a retract of the contractible space $v/X_y$ and, hence, contractible. If we are given a $k$-simplex $x=\br{A\overset{e}{\leftarrow}M}$ in $X_y$, then we set $\sigma\br{x}\coloneqq \br{A\xleftarrow{a_0\amalg e}\ast\amalg M}$ where $\ast\amalg M$ is the linear order given by adding a new smallest element $\ast$ to $M$ and $a_0\amalg e$ is the map defined by sending $\ast$ to $a_0$ and any element $m$ of $M$ to $e\br{m}$. Assuming that $\sigma$ is well-defined, this clearly gives a section, so it remains to check just that. In other words, we need to see that, if $\gamma\br{x}\leq g$, then $\gamma\br{\sigma\br{x}}\leq g$, so let us fix some $x$. For any edge in $\gamma\br{\sigma\br{x}}$ between vertices other than $a_0$, this graph coincides with $\gamma\br{x}$. Thus, it suffices to consider the edges between $a_0$ and some $a\in A\setminus\cb{a_0}$. If $a_0\overset{i}{\to}a$ in $\gamma\br{x}$, then the same is true in $\gamma\br{\sigma\br{x}}$. To see this, note that $a_0\to a$ in $\gamma\br{x}$ gives that the string obtained from $x$ by removing all symbols besides $a_0$ and $a$ begins with some string of $a_0$'s, the corresponding string for $\sigma\br{x}$ also begins with a (one symbol longer) string of $a_0$'s and the weight between the two elements is unchanged. If, on the other hand, $a\overset{i}{\to} a_0$ in $\gamma\br{x}$, then we have $a_0\overset{i+1}{\to}a$ in $\gamma\br{\sigma\br{x_0}}$ --- $a_0$ now occurs before $a$ in the string corresponding to $\sigma\br{x_0}$ and the symbols switch one additional time. Further, as $a_0$ is the smallest element of $g$, we must have $a_0\overset{j}{\to}a$ in $g$ for some $j>i$ and, so, $i+1\leq j$ and $\gamma\br{\sigma\br{x}}\leq g$, as well.
\end{proof}

\begin{theorem}
  The map $\gamma$ gives rise to an equivalence of operads between $\MS_m\br{A}$ and $\abs{\calg_m\br{A}}$.
\end{theorem}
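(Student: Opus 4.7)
The plan is to apply Corollary~\ref{corEquivalence} to the lax morphism $\gamma\colon \MS_m \to \calg_m$ constructed just above. Fixing a finite set $A$, one must verify two conditions: that for every $g \in \calg_m\br{A}$ the preimage $\gamma^{-1}\br{\down g} \subseteq \MS_m\br{A}$ is weakly contractible, and that the map $\MS_m\br{A} \to \calg_m\br{A}$ is sufficiently cofibrant.

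The key input for contractibility is Lemma~\ref{lemLPCosimplicial}. Since each weight $w_x\br{a,b}$ depends only on the data $A \xleftarrow{f} L$ and not on the map $\alpha\colon L \to \sbr{n}$, the cosimplicial isomorphism $\abs{\call_m\br{A;\bullet}} \cong \abs{\call_m\br{A;0}} \times \Delta^\bullet$ restricts, on both sides of the homeomorphism, to an isomorphism $\abs{\call_g\br{A;\bullet}} \cong \abs{\call_g\br{A;0}} \times \Delta^\bullet$ on the sub-cosimplicial space cut out by $\gamma \leq g$; the inverse map of Lemma~\ref{lemLPCosimplicial} only introduces duplicates of existing vertices, which leave weights intact. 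Passing to totalizations, and using that the totalization of a cosimplicial space of the form $Y \times \Delta^\bullet$ is canonically weakly equivalent to $Y$, identifies $\gamma^{-1}\br{\down g}$ with $\abs{\call_g\br{A;0}}$ up to weak equivalence. The previous proposition then supplies contractibility.

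For sufficient cofibrancy I would invoke the remark following Lemma~\ref{lemEquivalence}: the identifications above present $\MS_m\br{A}$, up to weak equivalence, as the realization of a multi-simplicial set, with each preimage the realization of a multi-simplicial subset, so condition (a)(ii) of that lemma is automatic. The main obstacle to turning this sketch into a full proof is the bookkeeping around the totalization: one must confirm that the level-wise function $\gamma$ descends unambiguously to a function on $\MS_m\br{A}$ whose fibres over downsets are precisely the totalizations of the sub-cosimplicial spaces $\abs{\call_g\br{A;\bullet}}$, and that this descent is compatible with the splitting of Lemma~\ref{lemLPCosimplicial}. Once this compatibility is in place, Corollary~\ref{corEquivalence} yields the theorem.
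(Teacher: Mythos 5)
Your proposal is correct and follows essentially the same route as the paper: apply Corollary~\ref{corEquivalence} to $\gamma$, using the splitting Lemma~\ref{lemLPCosimplicial} to reduce the cosimplicial totalisation to $\abs{\call_g\br{A;0}}$, whose contractibility is the content of the preceding proposition, and using the realisation-of-a-(multi)simplicial-set criterion for sufficient cofibrancy. The paper's own proof is terser --- a single sentence asserting cofibrancy and then citing the corollary and the proposition --- so it leaves implicit exactly the bookkeeping point you flag (that $\gamma^{-1}\br{\down g}\subseteq\MS_m\br{A}$ is the totalisation of $\abs{\call_g\br{A;\bullet}}$, compatibly with the splitting). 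Being explicit about the intermediate weak equivalence $\Tot{Y\times\Delta^\bullet}\simeq Y$ is a mild improvement in clarity rather than a different argument.
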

\begin{proof}
  The space $\MS_m\br{A}$ is the totalisation of a geometric realisation of a multisimplicial set and each $\abs{\call_g\br{A;n}}$ is the totalisation of the realisation of a multisimplicial subset, hence $\gamma$ is sufficiently cofibrant and so this follows from Corollary~\ref{corEquivalence} using the previous proposition.
\end{proof}

\appendix
\begin{appendices}
  \section{Some (co)simplicial generalities}\label{appA}
  For the precise procedure of turning a multisimplicial cosimplicial set into a topological space, we will recall some formalities about tensors and cotensors between (co)simplicial objects.
  \begin{enumerate}[itemindent=*,leftmargin=0pt]
  \item If $X$ is a simplicial set and $Y$ a cosimplicial one, the set $X\otimes Y$ is the set of equivalence classes of pairs $\br{x,y}\in X_m\times Y_m$ where, for each morphism $\theta: \sbr{n}\to \sbr{m}$ in $\Delta$, $x\in X_m$ and $y\in Y_n$, one identifies the pairs $\br{\theta^\ast x,y}$ and $\br{x,\theta_\ast y}$. We will write $x\otimes y$ for the equivalence class of $\br{x,y}$ --- this allows the equivalence relation to be expressed in the familiar form $x\theta\otimes y=x\otimes \theta y$ where we write the simplicial and cosimplicial structures as right and left actions. The set $X\otimes Y$ is a coend and also written as $\int^n X_n\times Y^n$, $\int^\Delta X_n\times Y^n\text{d}\!n$, or $X\otimes_{n}Y$.

    More generally, if $X$ is a simplicial object and $Y$ is a cosimplicial object in a cocomplete monoidal category, we can construct $X\otimes Y$ as the coequaliser
    \[ \coprod_{f\colon m\to n\in \mathrm{Mor}\br{\Delta}}X_n\otimes Y^m\rightrightarrows \coprod_{m\in \Delta} X_m\otimes Y^m\to X\otimes Y \]
    where the first map is given by the simplicial action of $X$ to map $X_n\otimes Y^m$ to $X_{m}\otimes Y^{m}$ and the second map is given by the cosimplicial action of $Y$ to map $X_{n}\otimes Y^{m}$ to $X_{n}\otimes Y^{n}$.
  \item If $X$ is a simplicial set and $Y$ is a cosimplicial {\em space\/} then $X\otimes Y$ inherits its topology as a quotient of a sum of copies of the spaces $Y^n$. In particular, if $Y$ is the cosimplicial space of standard simplices $\abs{\Delta^\bullet}$, then $X\otimes Y=\abs{X}$ is the geometric realisation of $X$. More generally, one can think of $X\otimes Y$ as a geometric realisation of $X$ with respect to $Y$ and write $\abs{X}_Y$ for $X\otimes Y$ in this case.
  \item The tensor product is associative in the following sense: if $X$ is a simplicial set, $Y$is a simplicial cosimplicial set, and $Z$ is a cosimplicial set, then
    \[ \br{X\otimes Y}\otimes Z\cong X\otimes\br{Y\otimes Z} \]
    with the isomorphism natural in all three arguments. More explicitly, this means
    \[\int^p\br{\int^n X_n\times Y_p^n}\times Z^p\cong \int^{n}X_n\times \br{\int^{p}Y_p^n\times Z^p}\]
    For example, if $X$ is a simplicial set and $Y$ is a cosimplicial simplicial set, then the geometric realisation $\abs{Y}$ is a cosimplicial space and the realisation of $X$ with respect to $\abs{Y}$ is the same as the realisation of $X\otimes Y$, i.e.\ \[ \abs{X}_{\abs{Y}}=\abs{X\times Y} \]
    as the left hand side is $X\otimes \br{Y\otimes \abs{\Delta^\bullet}}$ and the right is $\br{X\otimes Y}\otimes \abs{\Delta^\bullet}$.
  \item In the case of multi-simplicial and multi-cosimplicial objects, one can integrate over each variable, separately. This means that, if $X$ is an $A$-fold simplicial set and $Y$ is a $B$-fold simplicial cosimplicial set, then, for any $a\in A$, the tensor
    \[ X\otimes_a Y=\int^{a}X_a\times Y^a \]
    is an $A\sbr{B/a}$-fold simplicial set.
  \item For example the substitution of lattice paths from Definition~\ref{defSubstitution} is a map $\call\br{A;n}\otimes_a\call\br{B;\bullet}\to \call\br{A\sbr{B/a};n}$. The fact that this is well-defined is simply the Remark~\ref{remSubstitution}.

    In fact, this map is even an isomorphism: as the substitution does not affect the part of the lattice path over $A$ outside of $a$, we may assume that $A=\cb{a}$, but in this case $\call\br{A;n}\cong\Delta\br{n}$ is the standard simplicial $n$-simplex and $\call\br{A\sbr{B/a};n}\cong\call\br{B;n}$ with the isomorphism induced by the substitution. Since $\Delta^\bullet$ is the unit for the tensor (by the Yoneda lemma), this just means that our map is given by $\call\br{A;n}\otimes_a\call\br{B;\bullet}=\Delta^n\otimes_a\call\br{B;\bullet}\cong\call\br{B;n}\isoto \call\br{A\sbr{B/a};n}$.
  \item If $X$ is a cosimplicial space, then its totalisation $\mathrm{Tot}\br{X}$ (for which we introduced the alternative notation $-X-$) is the space of natural transformations from the standard cosimplicial space $\abs{\Delta^\bullet}$ into $X$
    \[ \mathrm{Tot}\br{X}=\Top^\Delta\br{\abs{\Delta^\bullet},X^\bullet} \]
    It has a natural topology as a subspace of the product of the spaces $\br{X^n}^{\Delta^n}$ of maps $\Delta^n\to X^n$.
  \item There is also a more general way to interpret the totalisation construction: if we are given two cosimplicial spaces $X$ and $Y$, then the mapping space $\br{X^\bullet}^{Y^\bullet}$ is both simplicial and cosimplicial, allowing us to take the end over the category $\Delta$ to obtain a space $\int_n\br{X_n}^{Y^n}$ --- this happens to be the space of natural transformations $X^\bullet\to Y^\bullet$. In particular, if we choose $Y=\abs{\Delta^\bullet}$, this recovers the notion of the totalisation. Thus, we may think of totalising as a sort of dual construction to geometrically realising.
  \end{enumerate}
  \begin{lemma}
    If $X$ and $Y$ are simplicial cosimplicial spaces, then there is a natural map
    \[ \Tot{\abs{X}}\times \Tot{\abs{Y}}\to \Tot{\abs{X\otimes Y}} \]
  \end{lemma}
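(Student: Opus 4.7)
The plan is to unpack the totalisation $\Tot{\abs{X\otimes Y}}$ using the associativity and Fubini-type identities for iterated coends recorded in Appendix~\ref{appA}, and then to produce the pairing by functoriality of the coend in its cosimplicial variable.

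First I would identify $\abs{X\otimes Y}$ explicitly. Writing the simplicial cosimplicial space $X$ as $X^n_m$ (cosimplicial $n$, simplicial $m$) and similarly for $Y$, the natural interpretation of the tensor is the simplicial cosimplicial space obtained by pairing the simplicial direction of $X$ with the cosimplicial direction of $Y$,
\[ \br{X\otimes Y}^n_p\ =\ \int^q X^n_q\times Y^q_p. \]
Realising in the remaining simplicial variable $p$ and exchanging the two coends via item~(3) of Appendix~\ref{appA} gives a natural identification of cosimplicial spaces
\[ \abs{X\otimes Y}^n\ \cong\ \int^q X^n_q\times\abs{Y^q}\ =\ X^n\otimes\abs{Y}. \]
Of course also $\abs{X}^n=X^n\otimes\abs{\Delta^\bullet}$.

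Next I would construct the map. A point $\tau\in\Tot{\abs{Y}}$ is by definition a morphism of cosimplicial spaces $\tau\colon \abs{\Delta^\bullet}\to\abs{Y}$, and the coend $X^n\otimes(-)$ is functorial in its cosimplicial argument, so $\tau$ induces a morphism of cosimplicial spaces
\[ \tau_{\ast}\colon \abs{X}\ =\ X^\bullet\otimes\abs{\Delta^\bullet}\ \xrightarrow{\ X^\bullet\otimes\tau\ }\ X^\bullet\otimes\abs{Y}\ =\ \abs{X\otimes Y}. \]
Applying $\Tot{-}$ and pairing with the first factor yields the desired map
\[ \Tot{\abs{X}}\times\Tot{\abs{Y}}\ \longrightarrow\ \Tot{\abs{X\otimes Y}},\qquad \br{\sigma,\tau}\ \longmapsto\ \Tot{\tau_{\ast}}\br{\sigma}. \]
Unpacked, given $t\in\abs{\Delta^n}$, represent $\sigma^n\br{t}\in\abs{X}^n$ by a class $\sbr{x,u}$ with $x\in X^n_q$ and $u\in\abs{\Delta^q}$; then the image of $\br{\sigma,\tau}$ sends $t$ to the class $\sbr{x,\tau^q\br{u}}$ in $\int^q X^n_q\times\abs{Y^q}=\abs{X\otimes Y}^n$.

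Finally I would collect the routine verifications. Well-definedness on coend classes is exactly the statement that $\tau$ commutes with the cosimplicial structure of $\abs{Y}$; continuity in $t$ and naturality in $n$ are inherited from $\sigma$ and $\tau$ together with functoriality of $\otimes$; continuity of $\br{\sigma,\tau}\mapsto\Tot{\tau_{\ast}}\br{\sigma}$ on the product follows from continuity in each variable separately, using that everything takes place in the fixed convenient category of spaces. Naturality in $X$ and $Y$ is immediate. The main obstacle, and really the only non-formal step, is the Fubini manipulation of the two coends in the first paragraph; once $\abs{X\otimes Y}$ has been identified with $X^\bullet\otimes\abs{Y}$, the rest is dictated by the universal property of the totalisation.
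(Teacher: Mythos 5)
Your proof is correct and follows essentially the same path as the paper's: you identify $\abs{X\otimes Y}^n$ with $X^n\otimes\abs{Y}$ via the Fubini identity of Appendix~\ref{appA}, push $\tau\colon\abs{\Delta^\bullet}\to\abs{Y}$ through $X^\bullet\otimes(-)$ to get $\tau_\ast\colon\abs{X}\to\abs{X\otimes Y}$, and compose with $\sigma$. The paper's proof is the same construction, written as $(\varphi,\psi)\mapsto\psi_\ast\circ\varphi$.
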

  \begin{proof}
    First, note that, for a simplicial cosimplicial space $X$, the geometric realisation $\abs{X^\bullet_\bullet}$ is again a cosimplicial space. On the other hand, $X\otimes Y=\br{\int^{n}X_n^p\times Y_m^n}_{p,m}$ is a cosimplicial simplicial space, so the space $\Tot{\abs{X\otimes Y}}$ is, indeed, well-defined. Now, given natural transformations $\varphi: \abs{\Delta^\bullet}\to \abs{X}$ and $\psi:\abs{\Delta^\bullet}\to\abs{Y}$, the latter defines a natural transformation of cosimplicial spaces
    \[ \psi_\ast: \abs{X^p}=X^p\otimes \abs{\Delta^\bullet}\to X^p\otimes \abs{Y} \]
    but the realisation $X^p\otimes \abs{Y}$ of $X$ with respect to $\abs{Y}$ is $\abs{X^p\otimes Y}$, as seen previously, so, in fact, $\psi_\ast: \abs{X}\to \abs{X\otimes Y}$. Composing this map with $\varphi$ gives a map $\abs{\Delta^\bullet}\to \abs{X\otimes Y}$. Thus, we obtain a natural map
    \begin{align*}
      &\Tot{\abs{X}}\times \Tot{\abs{Y}}\to \Tot{\abs{X\otimes Y}}\\
      &\br{\varphi,\psi}\mapsto \psi_\ast\circ\varphi
    \end{align*}
  \end{proof}
  
  \section{Some remarks on the literature}\label{appB}
  As mentioned in the introduction, the definition of the little $n$-cubes operads goes back to Boardman-Vogt~\cite{BV73} and May~\cite{May72}, who also introduced the notion of an $E_n$-operad. For $n=\infty$, Barratt and Eccles introduced their operad in~\cite{BE74}; this operad is essentially our operad $\Gamma$ given by the categories $\mathsf{Lin}$. The suboperads $\Gamma_n$ were introduced by Jeff Smith in~\cite{Smi89}, and conjectured to be $E_n$-operads in this paper.

 Kashiwabara~\cite{Kas93} was probably the first person to publish about this conjecture. He already used graphs, and wrote a directed graph on $A = \cb{1,\dots,p}$ with weights $1,\dots, n$ as a function $f: \mathrm{Ind}\br{p} \to \mathrm{J}\br{n}$ where $\mathrm{Ind}\br{p}$ is the set of two-element subsets of $A$ and $\mathrm{J}\br{n}$ is the product of $\cb{+,-}$ (for the directions in the graph) and $\cb{1,\dots,n}$ (for the weights). He uses the terms coherent for what we (following Berger) call complete, and non-degenerate for what we call proper. Furthermore, he writes $C_n\Sigma_p$ for the simplicial set $\Gamma_n\br{A}$ (where $A = \cb{1,\dots,p}$ again), and $\mathrm{C}\br{n,p}$ for what we call $\PP_n\br{A}$ (the proper graphs), cf. Definition 3.7 in loc.\ cit.. Using F. Cohen's description of the homology of configuration spaces~\cite{Coh76} Kashiwabara proceeds to prove a homology equivalence between (in our notation) the configuration space $\Conf_n\br{A}$ and the geometric realization $\abs{\PP_n\br{A}}$ of the proper graphs (cf.~\cite{Kas93}, 3.10), as well as a homotopy equivalence $\abs{\PP_n\br{A}} \xrightarrow{\raisebox{-0.2em}{$\sim$}}\abs{\Gamma_n\br{A}}$ (loc.\ cit.\ 6.1), and concludes in a somewhat indirect way that $\abs{\Gamma_n\br{A}}$ is homotopy equivalent to $\Conf_n\br{A}$ (loc.\ cit.\ 6.2).

 The decomposition of $\Conf_n$ into proper cells ($\psi^{-1}\br{g}$ in our Definition~\ref{defProper}) goes back to Fox and Neuwirth~\cite{FoN62} for $n=2$, and has been discussed extensively in the literature. For example, Getzler and Jones~\cite{GJ94} defined the cells in terms of nested preorders $\pi_1\leq\cdots\leq \pi_n$ on $A$, which are related to the proper graphs $g$ by defining $a\leq b$ in $\pi_i$ if and only if $a\overset{j}{\to} b$ in $g$ for some $j\leq i$. For further discussion of Fox-Neuwirth cells and applications, we refer to Giusti-Sinha~\cite{GS12}, Ayala-Hepworth~\cite{AH14}, and the references cited there.
 
  In a subsequent paper~\cite{Ber97}, Berger introduces the complete graph operad $\calg_n$, and shows that it is equivalent to $\Gamma_n$ as an operad, as stated above Theorem~\ref{thmBEisEn}. Another proof of this fact appears in~\cite{BFSV98}. Berger next argues that $\calc_n$ is equivalent to $\calg_n$ by proving an equivalence between $\Conf_n$ and $\PP_n$ (like~\cite{Kas93} does) and by arguing that $\PP_n$ is equivalent to $\calg_n$ by claiming that the inclusion of $\PP_n$ into $\calg_n$ has an adjoint, after quotienting by the action of $\Sigma_\bullet$ --- unfortunately, there was a slight mistake in the proof and such an adjoint cannot exist, as mentioned above. Moreover, Berger’s construction really lands in the larger operad $\calg_n^{\mathrm{ext}}$, as observed in Brun et.al.~\cite{BFV04}, who claim without further explanation that one can prove that $\calc_n$ is equivalent to $\calg_n^{\mathrm{ext}}$ by the same arguments as used in~\cite{Ber97}. A slightly more detailed version of Berger's arguments can also be found in~\cite{Ber96}.

  We hope to have established in a more rigorous way that $\PP_n$ is equivalent to $\calg_n$ as symmetric sequence of posets, and that $\calg_n$ and $\calg_n^{\mathrm{ext}}$ are indeed equivalent to each other and to $\calc_n$ as operads. Our argument that $\Gamma_n$ is an $E_n$-operad could therefore be based on a map of operads from $\Gamma_n$ to $\calg_n$ as we have seen.
  Our proof that $\calm_n$ is an $E_n$-operad largely follows~\cite{BFSV98}, as already mentioned.

  The fact that the Fulton-MacPherson operad $\FM_n$ is an $E_n$-operad was established by Salvatore, who constructed an equivalence of operads $W\calc_n \to \FM_n$, where $W$ denotes the Boardman-Vogt resolution. For us, having established that $\calg_n$ is an $E_n$-operad via a map $\Conf_n \to \calg_n$ (our Proposition~\ref{propGisEn}) the fact that $\FM_n$ is also equivalent to $\calg_n$ follows rather easily, as we have seen.
  Our proof that the McClure-Smith operad $\MS_n$ is an $E_n$-operad closely follows the same ideas as in~\cite{MS04}, although we hope to have clarified and simplified a few points. The observation that elements in $\call\br{A;n}$ can be pictured as paths in an $A$-dimensional lattice with $n$ ``stops'', i.e.\ as paths subdivided in $n+1$ segments, comes from~\cite{BB09}. They show that these lattice paths form a coloured operad from which the McClure-Smith operad can be constructed using an operation they call condensation.

\end{appendices}

%    Bibliographies can be prepared with BibTeX using amsplain,
%    amsalpha, or (for "historical" overviews) natbib style.
\bibliographystyle{amsalpha}
%    Insert the bibliography data here.
\providecommand{\bysame}{\leavevmode\hbox to3em{\hrulefill}\thinspace}
\providecommand{\MR}{\relax\ifhmode\unskip\space\fi MR }
% \MRhref is called by the amsart/book/proc definition of \MR.
\providecommand{\MRhref}[2]{%
  \href{http://www.ams.org/mathscinet-getitem?mr=#1}{#2}
}
\providecommand{\href}[2]{#2}

\end{document}